\newtheorem{thm}{Theorem}[section]
\newtheorem{prop}[thm]{Proposition}
\newtheorem{cor}[thm]{Corollary}
\newtheorem{defn}[thm]{Definition}
\newtheorem{example}[thm]{Example}
\newtheorem{rmk}[thm]{Remark}
\newcounter{EQNR}
\numberwithin{equation}{section}
\def\Empty{}
\newcommand\oplabel[1]{
  \def\OpArg{#1} \ifx \OpArg\Empty {} \else
    \label{#1}
  \fi}
\long\def\realfig#1#2#3#4{
\begin{figure}[tp]
\centerline{\psfig{figure=#2,width=#4}} \caption[#1]{#3}
\oplabel{#1}
\end{figure}}
\begin{document}

\title{Spectral asymptotics on sequences of elliptically degenerating Riemann surfaces}
\author{26 Feb 2016}
\date{D. Garbin and J. Jorgenson
\footnote{The first author acknowledges support from a PSC-CUNY
grant. The second author acknowledges support from grants from the
NSF and PSC-CUNY. }}

\maketitle

\begin{abstract}\noindent
This is the second in a series of two articles where we study various aspects of the spectral theory associated to 
families of hyperbolic Riemann surfaces obtained through elliptic degeneration.  In the first article, we investigate the asymptotics
of the trace of the heat kernel both near zero and infinity and we show the convergence of small eigenvalues and corresponding eigenfunctions. Having obtained necessary bounds for the trace, this second article presents the behavior of several spectral invariants. Some of these invariants, such as the Selberg zeta function and the spectral counting functions associated to small eigenvalues below 1/4, converge to their respective counterparts on the limiting surface. Other spectral invariants, such as the spectral zeta function and the logarithm of the determinant of the Laplacian diverge. In these latter cases, we identify diverging terms and remove their contributions, thus regularizing convergence of these spectral invariants. Our study is motivated by a result from \cite{He 83}, which D. Hejhal attributes to A. Selberg, proving spectral accumulation for the family of Hecke triangle groups. In this article, we obtain a quantitative result to Selberg's remark. 
\end{abstract}

\tableofcontents

\newpage

\section{Introduction}
\label{Introduction}

\hskip 0.2in In the last section of the monumental second volume of \emph{Selberg trace formula for $\mathrm{PSL}(2,\mathbb{R})$}, D. Hejhal proves a statement that he attributes to A. Selberg, concerning the behavior of the zeros and poles of the scattering determinant for the Eisenstein series associated to the Hecke triangle groups $G_N$ as $N$ goes to infinity. Namely, for the Hecke triangle groups $G_N$ which is a subgroup of $\mathrm{PSL}(2,\mathbb{R})$ generated by the fractional linear transformations $z \mapsto -1/z$ and $z \mapsto z+ 2\cos(\pi/N)$ for $3 \le N \le \infty$, the parabolic Eisenstein series associated to the cusp at infinity has the following Fourier expansion 
\begin{align*}
E_N(z;s) = y^s + \phi_N(s)y^{1-s} + O(e^{-2\pi y}) \,\,,
\end{align*}
where the function $\phi_N(s)$ is referred to as the determinant of the scattering matrix (a 1-1 matrix in this case.)
The behavior for the zeros and the poles of $\phi_N(s)$ are the last two results in Hejhal's second volume on the trace formula, with zeros accumulating to the right of the critical line and the poles to the left of it. The precise statements of Theorem 7.11 and Corollary 7.12  in \cite{He 83} are as follows:\\
\hskip 0.2in \mbox{}
\emph{
Given $t_0 \in \mathbb{R}$ and $0 < \delta < 1$, the rectangle $[\frac{1}{2},\frac{1}{2}+\delta] \times [t_0 -\delta,t_0+\delta]$ must contain zeros of 
$\phi_N(s)$ and the rectangle $[\frac{1}{2} -\delta,\frac{1}{2}] \times [t_0 -\delta,t_0+\delta]$ must contain poles of 
$\phi_N(s)$ when $N$ is sufficiently large.\\
}
The latter result appears in the ending remarks of Selberg's G\"{o}ttingen lectures part 2. Hejhal also promises to explore more this topic in a third volume on the trace formula, volume that unfortunately has not been published. 

\hskip 0.2in The Hecke triangle groups is one instance of a family of hyperbolic Riemann surfaces which is elliptically degenerated. In the setting of the Hecke groups $G_N$, Hejhal shows that the Eisenstein series and the scattering determinants converge through degeneration. Motivated by this, one can ask questions about the behavior of other spectral invariants such as the Selberg zeta and the spectral zeta functions, and spectral counting functions, and eigenfunctions associated to families obtained through elliptic degeneration.    

\hskip 0.2in 
This paper is the second in a series of two articles where we study various spectral invariants through elliptic degeneration. In the first paper \cite{GJ 16}, we show the behavior through degeneration of the trace of the heat kernel.  After we identify the divergent contributions to the trace, we regularize the trace and produce bounds in both compact and non-compact but finite volume settings. Using these bounds, we are then able to show that the small eigenvalues converge. In this second paper, we investigate the behavior of other spectral invariants as mentioned above. Among the highlights here is the quantification of the rate of accumulation of the poles of the scattering determinant for the Hecke triangle groups. 

\hskip 0.2in 
We should note that the phenomenon of degeneration of hyperbolic Riemann surfaces has been study elsewhere. A series of articles are concerned with the asymptotic behavior on families compact and non-compact finite volume through hyperbolic degeneration obtained by pinching geodesics. 
Articles such as \cite{JoLu 95}, \cite{Wo 87}, \cite{JoLu 97a}, \cite{HJL 97} study heat kernels, Selberg zeta function, determinants of the Laplacian, and small eigenvalues while for instance \cite{DJ 98} looks at the hyperbolic degeneration of 3-manifolds. While the methods of proofs in the hyperbolic degeneration case are adapted here in the elliptic setting, we feel that it would not be mathematically honest to simply state so without carefully analyzing the details. At the same time, the methods here are used to give a quantitative statement to Selberg's remark, which is not present in the previously mentioned articles. 

\hskip 0.2in The paper is organized as follows. In Section \ref{Geometry of elliptic degeneration}  we describe the setting of elliptic degeneration. In Section \ref{Asymptotics of heat kernels and traces} we recall the main results from the first paper \cite{GJ 16}. In Sections \ref{Asymptotics of spectral measures} and \ref{Convergence of spectral counting functions and small eigenvalues} we present the behavior of spectral measures in general and spectral counting functions in particular, the latter of the two sections containing the result about accumulation of the poles of the scattering determinant for the Hecke triangle groups. In Section \ref{Spectral functions} we present the behavior of the spectral and Hurwitz zeta functions while in Section \ref{Selberg zeta and determinant of the Laplacian} we study the Selberg zeta and the determinant of the Laplacian. Section \ref{Integral kernels}  concludes the paper by remarks concerning the behavior for other integral kernels. 

\section{Geometry of elliptic degeneration}
\label{Geometry of elliptic degeneration}

\hskip 0.2in In this section we will present the notion of elliptic degeneration of hyperbolic Riemann surfaces of finite volume (compact or non-compact), having elliptic fixed points. Elliptic degeneration occurs when the orders of such elliptic fixed points are increasing without a bound. As these orders are running off to infinity, their corresponding cones turn into cusps.

\hskip 0.2in Let $M$ be a connected hyperbolic Riemann surface of finite volume, either compact or non-compact. For simplicity, let us assume that $M$ is connected, so then $M$ can be realized as the quotient manifold $\Gamma \backslash \mathbb H$, where $\mathbb H$ is the hyperbolic upper half space and $\Gamma$ is a discrete subgroup of $\textrm{SL}(2,\mathbb R) \slash \{ \pm1 \}$.
A non-identity element $\gamma \in \Gamma$ is called hyperbolic, parabolic, or elliptic,  if $\gamma$ is conjugated in $\textrm{SL}(2,\mathbb R)$ to a dilation, horizontal translation,  or rotation respectively. This is analogous to $|\textrm{Tr}(\gamma)|$ being greater than, equal, or less than 2, respectively.  Furthermore, an element $\gamma$ is called primitive, if it is not a power other than $\pm1$ of any other element of the group. With this in mind, a primitive hyperbolic, parabolic, or elliptic element $\gamma$ is conjugated to $\left(\begin{array}{cc}
e^{\ell_{\gamma}/2} & 0 \\ 0&e^{-\ell_{\gamma}/2} \end{array}\right)$, $\left(\begin{array}{cc} 1 & w_{\gamma} \\ 0& 1 \end{array}\right)$, or $\left(\begin{array}{cc}
\cos(\pi/q_{\gamma}) & \sin(\pi/q_{\gamma}) \\ -\sin(\pi/q_{\gamma})&\cos(\pi/q_{\gamma}) \end{array}\right)$ respectively.
Here $\ell_{\gamma}$ is the length of the simple closed geodesic on the surface $M$ in the homotopy class of $\gamma$,
$w_{\gamma}$ denotes the width of the cusp fixed by $\gamma$, and $2\pi/q_{\gamma}$ is the angle of the conical point fixed by $\gamma$. The positive integer $q_{\gamma}$ is the order of the centralizer subgroup of the elliptic element $\gamma$. We will say that the corresponding elliptic fixed point has order $q_{\gamma}.$

\hskip .20in For a given positive integer $q$, let $C_q$ denote
the infinite hyperbolic cone of angle $2\pi/q$. One can realize $C_q$
as a half-infinite cylinder
\begin{equation}\label{hyperbolicpolarcoordinates}
C_{q} = \{ (\rho, \theta): \rho > 0, \theta \in [0,2\pi)\}\,\,.
\end{equation}
equipped with the Riemannian metric
\begin{equation}
ds^2 = d\rho^2 + q^{-2} \sinh^2 (\rho) d\theta^2\,\,,
\end{equation}
having volume form
\begin{equation}
d\mu = q^{-1} \sinh (\rho) d\rho d\theta\,\,.
\end{equation}

\hskip 0.2in A fundamental domain for $C_{q}$ in the hyperbolic unit disc model is
provided by a sector with vertex at the origin and with angle
$2\pi/q$. In coordinates, we write $\{ \alpha \exp(i\phi): 0 \le
\alpha < 1, 0 \le \phi < 2\pi/q\}$. The hyperbolic metric on $C_{q}$
is the metric induced onto the fundamental domain viewed as a subset of
the unit disc endowed with its complete hyperbolic metric.  The
isotropy group which corresponds to this fundamental domain
consists of the numbers $\exp(2\pi i k/q) $ for $k = 1,2, \cdots ,q$
acting by multiplication.
Let $C_{q,\varepsilon}$ denote the submanifold of
$C_{q}$ obtained by restricting the first coordinate of $(\rho,
\theta)$ to $0 \le  \rho < \cosh^{-1}(1+\varepsilon q/2\pi)$. A
fundamental domain for $C_{q,\varepsilon}$ in the unit disc model is
obtained by adding the restriction that
$\alpha < (\varepsilon q/(4\pi+\varepsilon q))^{1/2}.$ An elementary
calculation shows that the volume of this manifold
$\textrm{vol}(C_{q,\varepsilon}) = \varepsilon$, and the length of
the boundary of $C_{q,\varepsilon}$ is $(4\pi \varepsilon/q +
\varepsilon^2)^{1/2}$. For $\varepsilon_1 < \varepsilon_2$ one can
show that the length between the boundaries of the two nested cones
$C_{q,\varepsilon_1}$ and $C_{q,\varepsilon_2}$ is
\begin{equation*}
d_{\mathbb{H}}(\partial C_{q,\varepsilon_1}, \partial
C_{q,\varepsilon_2}) = \log \Bigg(\frac{\varepsilon_2 q + 2\pi +
\sqrt{\varepsilon_2 q(4\pi+\varepsilon_2q)}} {\varepsilon_1 q + 2\pi
+ \sqrt{\varepsilon_1 q(4\pi+\varepsilon_1q)}}\Bigg).
\end{equation*}

\hskip .20in Let $C_{\infty}$ denote an infinite cusp. A
fundamental domain for $C_{\infty}$ in the upper half plane is given
by the set $\{x+iy : y > 0, 0 < x < 1\}$. A fundamental domain for $C_{\infty}$
in the upper half plane is obtained
by identifying the boundary points $iy$
with $1+iy$. The isotropy group that corresponds to the above
fundamental domain consists of $\mathbb{Z}$ acting by addition.
As before, let $C_{\infty,\varepsilon}$ denote the submanifold of
$C_{\infty}$ obtained by restricting the $y$ coordinate of the
fundamental domain given above to $y > 2\varepsilon$. Easy
computations show that $\textrm{vol} (C_{\infty,\varepsilon}) =
\varepsilon/2$, and the length of the boundary of
$C_{\infty,\varepsilon}$ is also $\varepsilon/2$.

\hskip .20in Let $q = (q_1,q_2, \cdots, q_m)$ be a
vector of the orders of elliptic fixed points. In this case we
define $C_{q} = \cup_{k=1}^{m} C_{q_k}$. We similarly define $C_{q,
\varepsilon}$ as a union over the components of
$q$. We say that the vector $q$ approaches infinity if and
only if each of its components approach infinity.
Consequently, the Riemannian manifold $C_{q}$ (with $m$ connected
components) converges to $m$ copies of the limit Riemannian manifold
$C_{\infty}$ as $q \to \infty$. Similarly, $C_{q,
\varepsilon}$ converges to $m$ copies of $C_{\infty,\varepsilon}$.
We shall write these limits as $m \times C_{\infty}$ and $m \times
C_{\infty,\varepsilon}$.

\realfig{ed4}{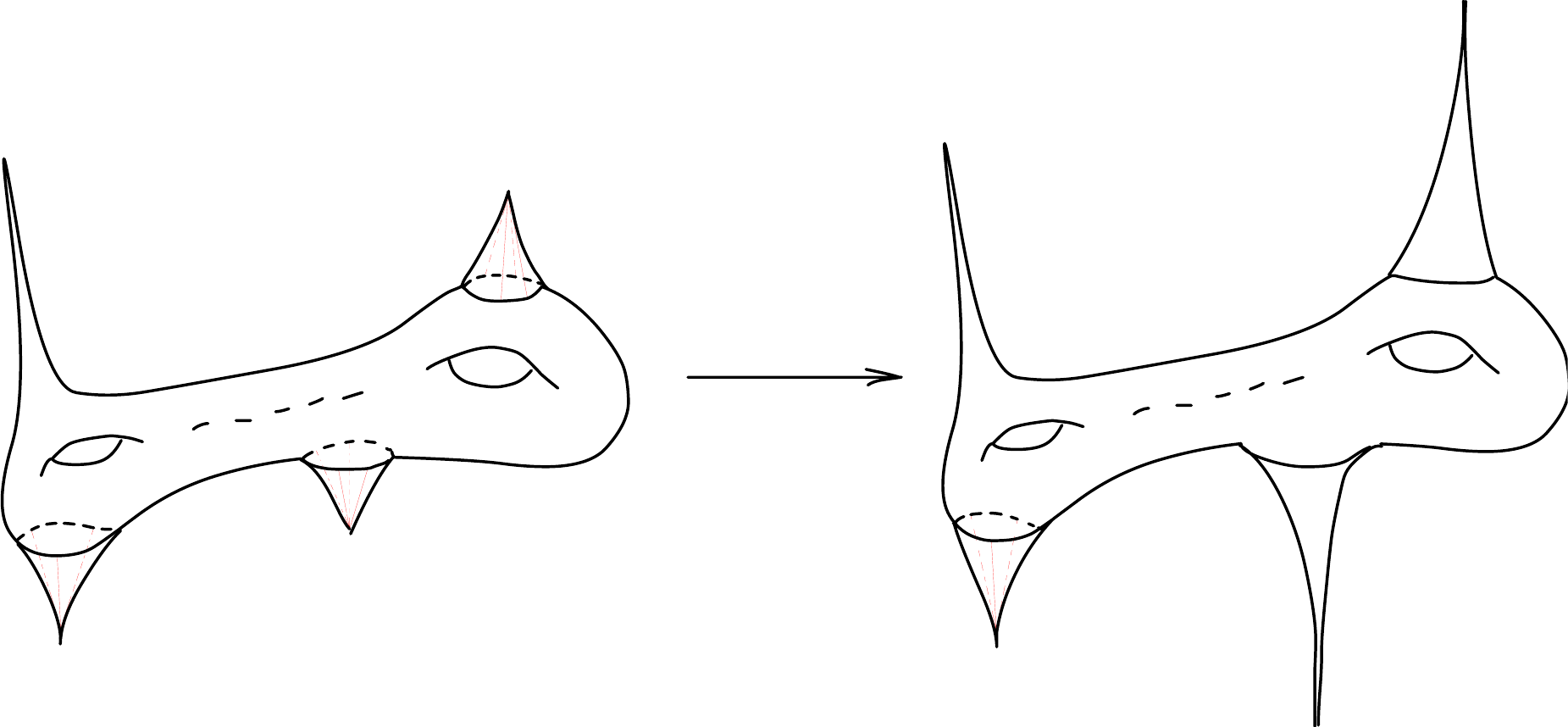}{Elliptic degeneration of
$q_1$ and $q_2$ $$ {
 \at{-5.83\pscm}{1.5\pscm}{$\frac{2\pi}{q_1}$}
 \at{-5.1\pscm}{4.2\pscm}{$\frac{2\pi}{q_2}$}
 \at{-7.73\pscm}{0.8\pscm}{$\frac{2\pi}{q_3}$}
 \at{-4.15\pscm}{3\pscm}{$q \to \infty$}
 \at{-8.8\pscm}{3\pscm}{$M_{q}$}
 \at{1\pscm}{3\pscm}{$M_{\infty}$}
}
$$}{9cm}

\hskip .20in
A family of finite volume hyperbolic surfaces
$M_{q}$ parametrized by the $m$-vector $q$ will be called
an elliptically degenerating surface if it has the following
properties (see Fig. \ref{ed4}):
\begin{enumerate}[a)]
\item For any $\varepsilon < 1/2$, the surface $C_{q, \varepsilon}$ (with $m$ components) embeds
isometrically into $M_{q}$.
\item As $q \to \infty$, $M_{q}$ converges to a complete, hyperbolic surface
$M_{\infty}$ in the following sense. $M_{\infty}$ contains $m$
embedded copies of $C_{\infty,\varepsilon}$ which is the limit of
$C_{q, \varepsilon} \subset M_{q}$. The geometry of
$M_{q} \backslash C_{q, \varepsilon}$ converges to the
geometry of $M_{\infty} \backslash (m \times
C_{\infty,\varepsilon})$.
\end{enumerate}

In the above, $m \times
C_{\infty,\varepsilon}$ refers to the ``new'' cusps of $M_{\infty}$,
that is, the cusps which developed from degeneration. In particular,
for every $q$, it is possible to identify points $x(q)$ and $y(q)$ on
$M_{q} \backslash C_{q, \varepsilon}$ such that
$\lim_{q \to \infty} d_{q}(x(q), y(q)) =
d_{\infty}(x(\infty), y(\infty))$. Henceforth, we shall suppress the
$q$ dependence of points which are identified during
degeneration and simply write $x$ and $y$. The volume forms induced
by the converging metrics also converge uniformly on $M_{q}
\backslash C_{q, \varepsilon}$, and all such measures are
absolutely continuous with respect to each other. In general, the
hyperbolic volume form occurring in an integral will be denoted by
$d\mu$ with an appropriate subscript when needed (for example,
$d\mu_{q}$). Length measure will be denoted by $d\rho$.

\hskip .20in The description of the degeneration of $M_{q}$ to
the limit surface $M_{\infty}$ also applies to the degeneration of
$C_{q}$ and $C_{q, \delta}$ (with $\varepsilon < \delta$)
to their limit surfaces, $m \times C_{\infty}$ and $m \times
C_{\infty, \delta}$ respectively.

\section{Asymptotics of heat kernels and traces}
\label{Asymptotics of heat kernels and traces}
\hskip 0.2in We begin this section with a brief account of the spectral theory and the heat kernel associated to a hyperbolic Riemann surface of finite volume. After defining the regularized, hyperbolic, elliptic, and degenerating traces of the heat kernel, we present a particular instance of the Selberg trace formula. We end this section, by harvesting some of the results from \cite{GJ 16} concerning the behavior thorough elliptic degeneration of the regularized trace. For brevity, we only present these results without proof, referring the reader to \cite{GJ 16} where elaborate accounts are given.

\hskip 0.2in
In this paper we consider hyperbolic surfaces having conical singularities (see \cite{Ju 95}), surfaces realized as the action of discrete groups $\Gamma$ of $\mathrm{PSL}(2,\mathbb{R})$ acting on $\mathbb{H}$. The conical singularities are present once the group $\Gamma$ contains elements (other than the identity) having fixed points. Such is the case with the full modular group $\mathrm{PSL}(2,\mathbb{Z})$.
In particular, let $M$ be a compact hyperbolic surface, having  $n$ marked points $\{c_i\}_{i=1}^{n}.$ The Riemannian metric $g$ on $M$ is called conically singular hyperbolic metric if and only if for every $i = 1,..,n$ there exists a chart $(U_i,\mu_i)$ about the point $c_i$ isometrically mapping $U_i$ to a hyperbolic cone model with associated angle $\alpha_i$. The metric $g$ induces a compatible complex structure on $M\backslash \{c_i\}_{i=1}^n$ and together with the charts  $\{(U_i,\mu_i)\}_{i=1}^{n}$, provide a complex structure on $M$. Given such structure, there exist a unique complete hyperbolic metric on $M\backslash \{c_i\}_{i=1}^n$ for which each $c_i$ is a cusp. 

\hskip 0.2in
As the surfaces in consideration have conical points, there is a way to extend the domain on which the Laplace operator acts so that it is self-adjoint. The Friedrichs procedure is one such possible extension and we use it here for our spectral purposes. Namely, the domain of the extension is the closure in $L^2(M)$ of the space
\begin{align*}
D = \left\{ f \in L^2(M) \,\,: \,\, \int_M (<\textrm{grad} f, \textrm{grad} f > + f^2)d\mu < \infty\ \textrm{ and } \int_{\partial \textrm{cusp}} f d\mu= 0 \right\}
\end{align*} 
where $d\mu$ denotes the hyperbolic volume form and the domain of integration for the second integral is a horocycle.
For details of the above construction we refer the reader to \cite{LP 76}, \cite{CdV 83}, \cite{Ji 94}, and \cite{Ju 95}. Throughout this paper, we will refer to the pseudo-Laplacian above as simply the Laplace operator.

\hskip 0.2in
Let $\Delta_M$ denote the non-negative Laplace operator on the surface $M$. Consider the heat operator $\Delta_M + \partial_t$ acting on functions $u : M\times {\mathbb R}^{+} \mapsto \mathbb R$ which are $C^2(M)$ and $C^1({\mathbb R}^{+})$. Then the heat kernel associated to $M$ is the minimal integral kernel which inverts the heat operator. Namely, the heat kernel is a function $K_M: \mathbb{R} \times M \times M \mapsto \mathbb R$ satisfying the following conditions. For any function $f\in C^2(M)$ consider the integral transform
\begin{align*}
u(t,x) = \int_M K_M(t,x,y)f(y)d\mu_M(y)\,\,.
\end{align*}
Then the following differential and initial time conditions are met
$$
\Delta_x u+ \partial_t u = 0
\,\,\,\,\,\text{\rm and} \,\,\,\,\,
f(x) = \lim_{t\to 0^+} u(t,x)\,\,.
$$

\hskip 0.2in If $M$ is compact, then the spectrum of the  Laplace operator is discrete, consisting of eigenvalues $0=\lambda_0 < \lambda_1 \le \lambda_2 \le \,\,\, \to \infty$ counted with multiplicity. Associated to these eigenvalues there is complete system $\{\phi_n(x)\}_{n=0}^{\infty}$
of orthonormal eigenfunction of the Laplace operator on $M.$  For $t>0$ and $x,y \in M$, the heat kernel has the following realization \begin{align}\label{heatkernelexpansioncompact}
K_M(t,x,y) = \sum_{n=0}^{\infty} e^{-\lambda_n t} \phi_n(x) \phi_n(y)\,\,,
\end{align}
and the sum converges uniformly on $[t_0,\infty)\times M \times M$ for fixed $t_0>0$ (see for instance \cite{Ch 84}.)

\hskip 0.2in If $M$ is not compact, the spectrum has a discrete part as well as a continuous part in the real interval $[1/4,\infty]$. The continuos spectrum comes from the parabolic Eisenstein series $E_{\text{par};M,P}(s,z)$ associated to the each cusp $P$ of $M$. In such case, the spectral expansion has the following form (coming from \cite{He 83})
\begin{align}\label{heatkernelexpansionnoncompact}
\notag K_M(t,x,y) =& \sum_{\text{discrete}} e^{-\lambda_n t} \phi_n(x) \phi_n(y)\\
&+ \frac{1}{2\pi} \sum_{\text{cusps }P} \int_{0}^{\infty} e^{-(1/4+r^2)t} E_{\text{par};M,P}(1/2+ir,x) \overline{E_{\text{par};M,P}(1/2+ir,y)} dr\,\,.
\end{align}

\hskip .20in  Let $K_{\mathbb{H}}(t, \tilde{x}, \tilde{y})$
denote the heat kernel on the upper half plane. Recall that
$K_{\mathbb{H}}(t, \tilde{x}, \tilde{y})$ is a function of $t$ and
the hyperbolic distance $d = d_{\mathbb{H}}(\tilde{x}, \tilde{y})$
between $\tilde{x}$ and $\tilde{y}$, so
\begin{equation*}
K_{\mathbb{H}}(t, \tilde{x}, \tilde{y}) = K_{\mathbb{H}}(t, d).
\end{equation*}
Quoting from page 246 of \cite{Ch 84}, we have for $d>0$
\begin{equation}\label{heatkernelonH1}
K_{\mathbb{H}}(t, d) = \frac{\sqrt{2}e^{-t/4}}{(4\pi t)^{3/2}}
\int_{d}^{\infty} \frac{u e^{-u^2/4t}du}{\sqrt{\cosh u - \cosh d}}
\end{equation}
while for $d=0$
\begin{equation}\label{heatkernelonH0}
K_{\mathbb{H}}(t, 0) = \frac{1}{2\pi } \int_{0}^{\infty}
e^{-(1/4+r^2)t} \tanh(\pi r)rdr.
\end{equation}

\begin{rmk}\label{complexrealtime}
\emph{It is possible to extend the heat kernel
to complex valued time. For time $z \in \mathbb{C}$, write $z=t+is$ with
$t>0$. Then we have
\begin{equation*}
K_{\mathbb{H}}(z, d) = \frac{\sqrt{2}e^{-z/4}}{(4\pi z)^{3/2}}
\int_{d}^{\infty} \frac{u e^{-u^2/4z}du}{\sqrt{\cosh u - \cosh d}}\,\,,
\end{equation*}
and setting $\tau=|z|^2/t$,  yields the bound
\begin{align*}
|K_{\mathbb{H}}(z,d)| &\le
\frac{\sqrt{2}e^{-t/4}}{(4\pi)^{3/2}(t^2+s^2)^{3/4}}
\int_{d}^{\infty} \frac{ue^{-u^2/4\tau}du}{\sqrt{\cosh u - \cosh
d}}\\
&\le e^{s^2/4t}t^{-3/2}(t^2+s^2)^{3/4} K_{\mathbb{H}}(\tau,d).
\end{align*}
}
\end{rmk}

\hskip 0.2in For any hyperbolic Riemann surface $M \simeq \Gamma \backslash \mathbb H$, one can express
the heat kernel as a periodization of the heat kernel of the
hyperbolic plane. Let $x$ and $y$ denote points on $M$ with lifts
$\tilde{x}$ and $\tilde{y}$ to $\mathbb{H}$. Then we can write the
heat kernel on $M$ as
\begin{equation}\label{periodization}
K_M(t,x,y) = \sum_{\gamma \in \Gamma} K_{\mathbb{H}}(t,
d_{\mathbb{H}}(\tilde{x}, \gamma \tilde{y})).
\end{equation}

Denote by $H(\Gamma), P(\Gamma)$, and $E(\Gamma)$ complete sets of
$\Gamma$-inconjugate primitive hyperbolic, parabolic, and elliptic elements, respectively, of the group  $\Gamma$. If $M$ is compact, then $P(\Gamma)$ is
empty. Let $\Gamma_{\gamma}$ denote the centralizer of $\gamma \in
\Gamma$. If $\gamma$ is a hyperbolic or a parabolic element then
$\Gamma_{\gamma}$ is isomorphic to the infinite cyclic group. If
$\gamma$ is elliptic then its centralizer is isomorphic
to the finite cyclic group of order $q_{\gamma}$. In each instance, the centralizer is generated by a primitive element. We can use
elementary theory of Fuchsian groups (see for instance \cite{McK 72}) to write the periodization (\ref{periodization}) as
\begin{align*}
K_M(t,x,y) = K_{\mathbb{H}}(t,\tilde{x},\tilde{y}) &+
 \sum_{\gamma \in P(\Gamma)} \sum_{n=1}^{\infty}\sum_{\kappa \in
\Gamma_{\gamma} \backslash \Gamma}
K_{\mathbb{H}}(t,\tilde{x},\kappa^{-1} \gamma^n \kappa
\tilde{y})\\
&+  \sum_{\gamma \in H(\Gamma)} \sum_{n=1}^{\infty}\sum_{\kappa \in
\Gamma_{\gamma} \backslash \Gamma}
K_{\mathbb{H}}(t,\tilde{x},\kappa^{-1} \gamma^n \kappa
\tilde{y})\\
&+  \sum_{\gamma \in E(\Gamma)} \sum_{n=1}^{q_{\gamma}-1}
\sum_{\kappa \in \Gamma_{\gamma} \backslash \Gamma}
K_{\mathbb{H}}(t,\tilde{x},\kappa^{-1} \gamma^n \kappa \tilde{y}).
\end{align*}
Using the above decomposition we define the parabolic contribution
(i.e. the contribution coming from the parabolic elements) to the
trace of the heat kernel by
\begin{align*}
PK_M(t,x) = \sum_{\gamma \in P(\Gamma)} \sum_{n=1}^{\infty}
\sum_{\kappa \in \Gamma_{\gamma} \backslash \Gamma}
K_{\mathbb{H}}(t,\tilde{x},\kappa^{-1} \gamma^n \kappa \tilde{x})
\end{align*}
and in a similar manner we define the hyperbolic contribution and
elliptic contribution which we denote by $HK_M(t,x)$ and $EK_M(t,x)$
respectively.

\begin{defn}\label{standardheattrace}
For a connected hyperbolic surface $M$, we define the regularized or standard heat trace $\mathrm{Str}K_M(t)$ by
\begin{align*}
\text{\rm STr}K_M(t) = \text{\rm HTr}K_M(t) + \text{\rm ETr}K_M(t) +
\mathrm{vol}(M)K_{\mathbb{H}}(t,0)\,\,,
\end{align*}
where the hyperbolic and elliptic traces $\text{\rm HTr}K_M(t)$ and $\text{\rm ETr}K_M(t)$ are given by
\begin{align*}
\text{\rm HTr}K_M(t) = \int_M HK_M(t,x) d\mu(x) \,\,\,\,\, \text{and} \,\,\,\,\,
\text{\rm ETr}K_M(t) = \int_M EK_M(t,x) d\mu(x)\,\,,
\end{align*}
respectively.
If $M$ is a hyperbolic Riemann surface of finite volume, but not
connected, each trace can be defined as the sum of the traces associated to each connected component of $M$.
\end{defn}

\hskip 0.2in The following result due to Selberg \cite{Se 56}
evaluates the integral representation that defines the hyperbolic trace, namely
\begin{equation}\label{hyperbolictrace}
\text{\rm HTr}K_M(t) = \frac{e^{-t/4}}{\sqrt{16\pi t}}
\sum_{\gamma \in H(\Gamma)}\sum_{n=1}^{\infty}
\frac{\ell_{\gamma}}{\sinh(n
\ell_{\gamma}/2)}e^{-(n\ell_{\gamma})^2/4t},
\end{equation}
while the elliptic trace can be expressed as
\begin{equation}\label{elliptictrace}
\text{\rm ETr}K_M(t) = \frac{e^{-t/4}}{\sqrt{16\pi t}}\sum_{\gamma\in
E(\Gamma)}\sum_{n=1}^{q_{\gamma}-1} \frac{1}{q_{\gamma}}
\int_{0}^{\infty}
\frac{e^{-u^2/4t}\cosh(u/2)}{\sinh^2(u/2)+\sin^2(n\pi/q_{\gamma})}du.
\end{equation}
For detailed accounts of (\ref{hyperbolictrace}) and (\ref{elliptictrace}), we refer the reader to Theorem 1.3 of \cite{JoLu 97b} and Theorem 2.5 of \cite{GJ 16} respectively.

\hskip 0.2in An alternative expression for the elliptic heat trace, namely
\begin{align}\label{Hejhalelliptictrace}
\text{\rm ETr}K_M(t) = \sum_{\gamma\in E(\Gamma)}
\sum_{n=1}^{q_{\gamma}-1}\frac{e^{-t/4}}{2q_{\gamma}\sin(n\pi/q_{\gamma})}
            \int_{-\infty}^{\infty} \frac{e^{-2\pi nr / q_{\gamma} - tr^2}}{1+e^{-2\pi r}}dr
\end{align}
can be found in \cite{He 76} on page 351
or \cite{Ku 73} on pages 100-102. One can use the Parseval formula to show that the two seemingly different expressions (\ref{elliptictrace}) and (\ref{Hejhalelliptictrace}) for $\textrm{Etr}K_M(t)$ are equal (see Remark 2.6 in \cite{GJ 16}).

\begin{rmk}\label{Selbergtraceformula}
\emph{In the case $M$ is compact, the standard trace $\text{\rm STr}K_M(t)$ is simply
the trace of the heat kernel. One immediately obtains from (\ref{heatkernelexpansioncompact}) the spectral aspect of the standard trace,
\begin{align}\label{regularizedtracecompactspectral}
\text{\rm STr}K_M(t) = \int_{M} K_M(t,x,x)d\mu(x) = \sum_{n=0}^{\infty} e^{-\lambda_nt}\,\,.
\end{align}
On the other hand, Definition \ref{standardheattrace} and the various aforementioned integral representations ((\ref{heatkernelonH0}), (\ref{hyperbolictrace}), (\ref{Hejhalelliptictrace})), give the geometric side of the standard trace, namely
\begin{align}\label{regularizedtracegeometric}
\notag \text{\rm STr}K_M(t) =&
\frac{\textrm{vol}(M)}{4\pi}
\int_{-\infty}^{\infty}e^{-(r^2+1/4)t}\tanh(\pi r)rdr\\
\notag &+ \sum_{\gamma
\in H(\Gamma)} \sum_{n=1}^{\infty} \frac{\ell_{\gamma}}{\sinh(n
\ell_{\gamma}/2)} \frac{e^{-t/4}}{\sqrt{16\pi t}} e^{-(n\ell_{\gamma})^2/4t}\\
&+\sum_{\gamma\in E(\Gamma)} 
\sum_{n=1}^{q_{\gamma}-1}\frac{e^{-t/4}}{2q_{\gamma}\sin(n\pi/q_{\gamma})}
            \int_{-\infty}^{\infty} \frac{e^{-2\pi nr / q_{\gamma} - tr^2}}{1+e^{-2\pi r}}dr\,\, .
\end{align}
The combination of (\ref{regularizedtracecompactspectral}) and (\ref{regularizedtracegeometric}) represent an instance of the Selberg trace formula as applied to the function $f(r) = e^{-tr^2}$ and its Fourier transform  $\hat{f}(u) =
(4\pi t)^{-1/2}e^{-u^2/4t}$. \\
\text{\hskip 0.2in} One can use this special case to generalize the trace formula to a larger class of functions. First, denote by $r_n$ the solutions to $\lambda_n = 1/4+ r_n^2$. The non-negativity of the eigenvalues imply that for each $n$ there are two solutions $r_n$ which are either opposite real numbers or complex conjugate numbers in the segment $[-i/2,i/2]\,\,.$ \\
\text{\hskip 0.2in}To continue, let $h(t)$ be any measurable function for which $h(t)e^{(1/4+\varepsilon)t}$ is in $L^1(\mathbb R)\,$ for some $\varepsilon>0\,.$ Multiply the right-hand side of (\ref{regularizedtracecompactspectral}) and (\ref{regularizedtracegeometric}) by $h(t)e^{t/4}$ and integrate from 0 to $\infty$ with respect to $t.$ Set
\begin{align*}
H(r)=\int_{0}^{\infty} h(t) e^{-r^2t} dt\,\,.
\end{align*}
By rewriting the absolute integrand of $H(r)$ as $|h(t)e^{(1/4+\varepsilon)t})|\cdot |e^{-(r^2+1/4+\varepsilon)t})|$ and recalling the imposed conditions on $h(t)$, it easily follows that $H(r)$ is analytic inside the horizontal strip $|\text{Im}(r) | \le 1/2 + \varepsilon'$ for some $\varepsilon'>0$ depending on $\varepsilon\,\,.$ The Fourier transform of $H(r)$ has the form
\begin{align*}
\hat{H}(u)=\int_{0}^{\infty}  h(t) \frac{1}{\sqrt{4\pi t}}e^{-u^2/4t} dt\,\,.
\end{align*}
Putting these facts together yields the Selberg trace formula in the compact case, namely
\begin{align}\label{STFcompact}
\notag \sum_{r_n} H(r_n) =&
\frac{\textrm{vol}(M)}{4\pi}
\int_{-\infty}^{\infty}H(r) \tanh(\pi r)rdr\\
\notag &+ \sum_{\gamma
\in H(\Gamma)} \sum_{n=1}^{\infty} \frac{\ell_{\gamma}}{2\sinh(n
\ell_{\gamma}/2)}\hat{H}(n\ell_{\gamma})\\
&+\sum_{\gamma\in
E(\Gamma)}\sum_{n=1}^{q_{\gamma}-1} \frac{1}{2q_{\gamma}\sin(n\pi/q_{\gamma})}
            \int_{-\infty}^{\infty} H(r) \frac{e^{-2\pi nr / q_{\gamma}}}{1+e^{-2\pi r}}dr\,\, ,
\end{align}
where the sum on the left is taken over $r_n \in (0,\infty) \cup [0,i/2]\,\,.$ We note that (\ref{STFcompact}) above agrees with the formula in Theorem 5.1 of \cite{He 76}, with $\chi$ being the trivial character of the group $\Gamma$. 
\\
\\
\text{\hskip 0.2in} In the case $M$ is non-compact, the regularized trace equals the trace of the heat kernel minus the contribution of the parabolic conjugacy classes. While the geometric side of the regularized trace is precisely as in (\ref{regularizedtracegeometric}), the spectral side has the following presentation
\begin{align}\label{regularizedtracenoncompactspectral}
\notag \text{\rm STr}K_M(t) =& \sum_{C(M)} e^{-\lambda_n t}  - \frac{1}{4\pi} \int_{-\infty}^{\infty} e^{-(r^2+1/4)t} \frac{\phi'}{\phi}(1/2+ir)dr\\
\notag &+ \frac{p}{2\pi} \int_{-\infty}^{\infty} e^{-(r^2+1/4)t} \frac{\Gamma'}{\Gamma}(1+ir)dr\\
&- \frac{1}{4}\Big(p - \text{Tr }\Phi(1/2) \Big)e^{-t/4} + \frac{p \log 2}{\sqrt{4\pi t}} e^{-t/4}\,\,,
\end{align}
where   $C(M)$ denotes a set of eigenvalues associated to $L^2$ eigenfunctions on $M$, $\phi(s)$ the determinant of the scattering matrix $\Phi(s)$, $\Gamma(s)$ the Euler gamma function, while $p$ the number of cusps of $M$ (see page 313 of \cite{He 83}).\\
\text{\hskip 0.2in} One can use the same argument as in the compact case to obtain the formal Selberg trace formula in the non-compact case. While the geometric side doesn't change (see the right-hand side of (\ref{STFcompact}), the spectral side is as follows:
\begin{align}\label{STFnoncompact}
\notag \text{\rm spectral side} =& \sum_{r_n} H(r_n)  - \frac{1}{4\pi} \int_{-\infty}^{\infty} H(r) \frac{\phi'}{\phi}(1/2+ir)dr\\
\notag &+ \frac{p}{2\pi} \int_{-\infty}^{\infty} H(r)\frac{\Gamma'}{\Gamma}(1+ir)dr\\
&- \frac{1}{4}\Big(p - \text{Tr }\Phi(1/2) \Big)H(0) + p \log(2) \hat{H}(0)\,\,.
\end{align}
}
\end{rmk}

\begin{rmk}
\emph{
Returning to the special case of the trace formula given by (\ref{regularizedtracegeometric}), we note the following. For the first term in the right hand side of (\ref{regularizedtracegeometric}),
the identity contribution, we can write
\begin{align*}
\text{\rm ITr}K_M(t) =
\frac{\textrm{vol}(M)e^{-t/4}}{4t}
\int_{0}^{\infty} e^{-tr^2} \mathrm{sech}^2(\pi r)dr
\end{align*}
using integration by parts. Furthermore, for any $t\ge 0$, the integral can be bounded as follows
\begin{align*}
\int_{0}^{\infty} e^{-tr^2} \mathrm{sech}^2(\pi r)dr \le \int_{0}^{\infty} \mathrm{sech}^2(\pi r)dr = \frac{1}{\pi}
\end{align*}
with equality when $t = 0.$ It then follows that the identity trace has the following asymptotics
\begin{align}\label{identitytraceasymp}
\text{\rm ITr}K_M(t) = 
\left\{
\begin{array}{lr}
\cfrac{\textrm{vol}(M)}{4\pi t } + O(1), & \textrm{at } t = 0\\ \\
O(e^{-t/4 }), &  \textrm{at } t = \infty\,\,.
\end{array}
\right.
\end{align} 
The hyperbolic trace, the second term in the geometric side of the trace (\ref{regularizedtracegeometric}), has the following asymptotics
\begin{align}\label{hyperbolictraceasymp}
\text{\rm HTr}K_M(t) =
\left\{
\begin{array}{rl} 
O(e^{-c/t }),& \textrm{at } t = 0\\
O(e^{-t/4 }),&  \textrm{at } t = \infty\,\,.
\end{array}
\right.
\end{align}  
For a detailed account of these see Theorem 1.1 in \cite{JoLu 97b}. To continue, the integrals in the elliptic trace can be bounded as follows: For any primitive elliptic element $\gamma \in E(\Gamma)$ and $1\le n<q_{\gamma}$, we have 
\begin{align*}
\int_{-\infty}^{\infty} \frac{e^{-tr^2 - 2\pi nr / q_{\gamma}} }{1+e^{-2\pi r}}dr 
= & \int_{0}^{\infty} \frac{e^{-tr^2 - 2\pi nr / q_{\gamma}} }{1+e^{-2\pi r}}dr +  \int_{0}^{\infty} \frac{e^{-tr^2 +2\pi nr / q_{\gamma}} }{1+e^{2\pi r}}dr \\
& \le \int_{0}^{\infty} e^{-tr^2 - 2\pi (n / q_{\gamma})r}dr +  \int_{0}^{\infty} e^{-tr^2 - 2\pi(1-n/ q_{\gamma})r}dr\,\,. 
\end{align*}  
Now for $b>0$, the function $G_b(t)$ given by the Gaussian integral
\begin{align*}
G_b(t) = \int_{0}^{\infty} e^{-tr^2-br} dr
\end{align*} 
is defined for any $t\ge 0$. Furthermore, since the limits of $G_b(t)$ at $t=0$ and at $t=\infty$ are $b^{-1}$ and $0$ respectively, the integrals in the elliptic trace are finite for all $t\ge0$.
Consequently, the elliptic trace has the following behavior
\begin{align}\label{elliptictraceasymp}
\text{\rm ETr}K_M(t) = 
\left\{
\begin{array}{rl}
O(1),&\textrm{at } t = 0\\
O(e^{-t/4 }),&\textrm{at } t = \infty\,\,.
\end{array}
\right.
\end{align}
Putting all these together, the combination of  (\ref{regularizedtracecompactspectral}), (\ref{regularizedtracegeometric}), (\ref{identitytraceasymp}), (\ref{hyperbolictraceasymp}), and (\ref{elliptictraceasymp}) give the asymptotic behavior for the standard trace of the heat kernel in the compact setting. Namely, at $t=0$, we have
\begin{align}\label{standardtraceasymp0}
\text{\rm Str}K_M(t) = \sum_{n=0}^{\infty} e^{-\lambda_n t} = \frac{\textrm{vol}(M)}{4\pi t} + O(1)\,\,,
\end{align}
while at $t=\infty$
\begin{align}\label{standardtraceasympinfty}
\text{\rm Str}K_M(t) = 1+O(e^{-ct}) \,\,
\end{align}
for positive constant $c$.
Furthermore, if we denote by $N(\lambda) = \textrm{card} \{\lambda_n: \lambda_n \le \lambda\}$, then we can write the above expansion as follows
\begin{align*}
\int_{0}^{\infty} e^{-\lambda t}dN(\lambda) = \frac{\textrm{vol}(M)}{4\pi t} + O(1) \textrm{ at } t = 0\,\,.
\end{align*}
The Tauberian-Karamata theorem then gives an instance of Weyl's law as applied in the setting of hyperbolic Riemann surfaces
\begin{align}\label{Weyl'slaw}
N(\lambda) \sim \frac{\textrm{vol}(M)}{4\pi}\lambda  \textrm{ at } \lambda = \infty\,\,.
\end{align}
}
\end{rmk}

\hskip 0.2in The next result presents the behavior through degeneration of the heat kernel and its derivatives. Namely, we have the following theorem. For brevity, we only state the result. For details, we refer the reader to \cite{JoLu 95} and Theorem 1.3 of \cite{JoLu 97a} which one can easily adapt to the elliptic degeneration setting.

 \begin{thm}\label{convergenceheatkernels}
Let $R_{q}$ denote either $M_{q}$ or $C_{q}$. For $i
= 1, 2$, let $\nu_i = \nu_i(q)$ be a tangent vector of unit
length based at $x_i \in R_{q}$ which converges as $q \to
\infty$. Denote by $\partial_{\nu_i,x_i}$ the directional
derivative with respect to the variable $x_i$ in the direction
$\nu_i$. Assume that either $x_1$ or $x_2$ is not a degenerating
conical point. Then
\begin{align}\label{convergenceheatkernels1}
\lim_{q \to \infty} K_{R_{q}}(z,x_1,x_2) &=
K_{R_{\infty}}(z,x_1,x_2) \\
\label{convergenceheatkernels2}\lim_{q \to \infty}
\partial_{\nu_i,x_i} K_{R_{q}}(z,x_1,x_2) &=
\partial_{\nu_i,x_i} K_{R_{\infty}}(z,x_1,x_2) \textrm{ for }
i=1,2\\
\label{convergenceheatkernels3}\lim_{q \to \infty}
\partial_{\nu_1,x_1} \partial_{\nu_2,x_2} K_{R_{q}}(z,x_1,x_2)
&= \partial_{\nu_1,x_1}
\partial_{\nu_2,x_2} K_{R_{\infty}}(z,x_1,x_2)
\end{align}

\hskip .2in
\begin{enumerate}[a)]
\item Let A be a bounded set in the complex plane with $\inf_{z\in A}
\rm{Re}(z) > 0.$ For any $\varepsilon > 0$, the convergence is
uniform on $A \times R_{q} \backslash C_{q,\varepsilon}
\times R_{q} \backslash C_{q,\varepsilon}$.

\item We define $D_{\varepsilon,\varepsilon'}$ to be an $\varepsilon'$
neighborhood of the diagonal of $R_{q} \backslash
C_{q,\varepsilon} \times R_{q} \backslash
C_{q,\varepsilon}$. That is,
\begin{equation*}
D_{\varepsilon,\varepsilon'} = \{ (x_1,x_2) \in R_{q}
\backslash C_{q,\varepsilon} \times R_{q} \backslash
C_{q,\varepsilon} : d(x_1,x_2) < \varepsilon' \}
\end{equation*}
Let $B$ be a bounded set in the complex plane with $\inf_{z\in B}
\rm{Re}(z) \ge 0$. For any $\varepsilon > 0$ and $\varepsilon' > 0$,
the convergence is uniform on $B \times ((R_{q} \backslash
C_{q,\varepsilon} \times R_{q} \backslash
C_{q,\varepsilon}) \backslash D_{\varepsilon,\varepsilon'})$.
\end{enumerate}
\end{thm}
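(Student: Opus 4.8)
The argument follows the template for hyperbolic (pinching) degeneration in \cite{JoLu 95} and Theorem 1.3 of \cite{JoLu 97a}, transported to the elliptic model. Its engine is the periodization \eqref{periodization}, which writes $K_{R_{q}}(z,x_1,x_2)$, for $R_{q}=\Gamma_{q}\backslash\mathbb H$, as a sum over $\Gamma_{q}$ of translates $K_{\mathbb H}(z,d_{\mathbb H}(\tilde x_1,\gamma\tilde x_2))$. I would split $\Gamma_{q}$ into a \emph{stable} part --- the identity together with the hyperbolic, parabolic, and non-degenerating elliptic classes, whose length/width/angle data converge as $q\to\infty$ by the very definition of elliptic degeneration in Section~\ref{Geometry of elliptic degeneration} --- and a \emph{degenerating} part, the rotations $\gamma^{n}$ with $1\le n<q_k$ in the cyclic centralizers of the conical points of orders $q_k\to\infty$. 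On the stable part the summands converge individually and are dominated, so this portion tends to the corresponding periodization over $\Gamma_{\infty}$; the degenerating part must be shown to converge to the parabolic contribution of the \emph{new} cusps of $R_{\infty}$. The hypothesis that at most one of $x_1,x_2$ is a degenerating conical point is exactly what makes $K_{R_{\infty}}(z,x_1,x_2)$ meaningful: a conical tip recedes into the new cusp (its distance to $\partial C_{q,\varepsilon}$ is $\cosh^{-1}(1+\varepsilon q/2\pi)\to\infty$), and the heat kernel from a cusp to a finite point vanishes, consistent with $K_{R_{q}}(z,x_1,x_2)\to0$ when one argument sits at a tip; were both at tips, the on-diagonal value would stay bounded away from $0$ uniformly in $q$ by \eqref{periodization}, precluding any limit.

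The first task is a bound on $|K_{R_{q}}(z,x_1,x_2)|$ that is uniform in $q$ and in $(x_1,x_2)$ over the non-degenerating region, valid for $z\in A$, and for $z\in B$ under the extra restriction $d(x_1,x_2)\ge\varepsilon'$ --- this restriction being forced because the on-diagonal heat kernel blows up like $(\mathrm{Re}\,z)^{-1}$ as $z\to0$. One estimates $|K_{\mathbb H}(z,d)|$ via Remark~\ref{complexrealtime} in terms of $K_{\mathbb H}(\tau,d)$ with $\tau=|z|^{2}/\mathrm{Re}(z)$, uses the Gaussian off-diagonal decay $e^{-d^{2}/4t}$ of $K_{\mathbb H}(t,d)$ coming from \eqref{heatkernelonH1}, and invokes the lattice count $\#\{\gamma\in\Gamma_{q}:d_{\mathbb H}(\tilde x_1,\gamma\tilde x_2)\le T\}\le Ce^{T}$, which holds uniformly in $q$ since covolumes stay bounded ($\mathrm{vol}(M_q)\to\mathrm{vol}(M_\infty)$ by Gauss--Bonnet) and injectivity radii in the non-degenerating region are bounded below, the original cusps being absorbed by the classical uniform estimate on parabolic periodizations. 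The heart of this step is the degenerating part: writing each rotation $\gamma^{n}$ of $C_{q_k}$ in the coordinates \eqref{hyperbolicpolarcoordinates}, estimating $d_{\mathbb H}(\tilde x_1,\gamma^{n}\tilde x_2)$, and using the cone estimates of \cite{GJ 16}, one shows that the sum over $1\le n<q_k$ and over cosets stays bounded and, as $q_k\to\infty$, converges to the parabolic heat-kernel contribution of the new cusp --- the pointwise incarnation of the passage from the elliptic trace \eqref{elliptictrace} to the cusp integral \eqref{Hejhalelliptictrace}.

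Granting these bounds, parabolic interior regularity applied to $(\partial_t+\Delta_{x_i})K_{R_{q}}=0$ upgrades them to uniform $C^{k}$ bounds on slightly smaller regions for each $k$, and holomorphy of $z\mapsto K_{R_{q}}(z,\cdot,\cdot)$ on $\mathrm{Re}(z)>0$ (cf.\ Remark~\ref{complexrealtime}) yields joint $C^{\infty}_{\mathrm{loc}}$ control. The Arzela--Ascoli theorem then extracts, along any subsequence $q_j\to\infty$, a $C^{\infty}_{\mathrm{loc}}$ limit $\kappa(z,x_1,x_2)$ on the non-degenerating part of $R_{\infty}$, and it remains to identify $\kappa=K_{R_{\infty}}$. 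By $C^{\infty}_{\mathrm{loc}}$-convergence together with convergence of the metrics, $\kappa$ solves the heat equation in each variable; a short diagonal-parametrix argument --- for small $t$ the identity term of \eqref{periodization} dominates and the remainder is $O(e^{-c/t})$ uniformly by the lower injectivity-radius bound --- shows $\kappa(t,x_1,\cdot)\to\delta_{x_1}$ as $t\to0^{+}$; and the uniform-in-$q$ decay of $K_{R_{q}}$ as one enters $C_{q,\varepsilon}$ (again from \cite{GJ 16}) passes to $\kappa$, placing $\kappa(t,\cdot,y)$ in the Friedrichs domain at the new cusps. Uniqueness of the heat kernel for the Friedrichs Laplacian on $R_{\infty}$ forces $\kappa=K_{R_{\infty}}$; since every subsequence has a further subsequence with this limit, the whole family converges, so \eqref{convergenceheatkernels1} follows, while \eqref{convergenceheatkernels2}--\eqref{convergenceheatkernels3} and the uniformity statements a), b) drop out of the $C^{\infty}_{\mathrm{loc}}$ nature of the convergence (using that the unit vectors $\nu_i(q)$ converge) and of the bounds.

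The main obstacle is the degenerating part of the periodization: the number of rotation terms $q_k-1$ grows without bound, so even their boundedness --- let alone convergence to the new-cusp contribution --- is not formal, and it requires the quantitative control of the cone geometry and of $K_{C_{q}}$ near the degenerating tip established in \cite{GJ 16}. The companion subtlety is verifying that the subsequential limit satisfies the \emph{Friedrichs} boundary condition at the new cusps, rather than corresponding to some other self-adjoint realization of the Laplacian there; this again rests on the uniform near-cone decay estimates. The remaining ingredients --- lattice counting, parabolic interior estimates, the Arzela--Ascoli theorem, and uniqueness of the heat kernel --- are routine, but, as emphasized in the text, have to be re-derived in the elliptic model rather than quoted verbatim from the pinching case.
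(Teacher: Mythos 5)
The paper offers no proof of Theorem \ref{convergenceheatkernels} at all: it states the result and defers entirely to [JoLu~95] and Theorem 1.3 of [JoLu~97a], asserting that the hyperbolic-degeneration argument ``can easily be adapted'' to the elliptic setting. Your outline is a faithful rendering of exactly that adaptation --- periodization over $\Gamma_{q}$, the split into stable and degenerating conjugacy classes, uniform off-cone bounds via lattice counting and the Gaussian decay of $K_{\mathbb H}$, and identification of the limit --- and you correctly isolate the one genuinely nontrivial step (the sum over the $q_k-1$ rotations converging to the new-cusp parabolic contribution, resting on the cone estimates of [GJ~16]), so your proposal matches the approach the paper intends.
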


\hskip 0.2in To continue, let us define the degenerating trace of the heat kernel.
Denote by $DE(\Gamma)$ a subset of the elliptic conjugacy classes
$E(\Gamma)$, corresponding to the cones we wish to degenerate into cusps. It then follows that the degenerating heat trace $\mathrm{Dtr}K_M(t)$ can be expressed as
\begin{equation}\label{degeneratingtrace}
\text{\rm DTr}K_M(t) = \frac{e^{-t/4}}{\sqrt{16\pi t}}\sum_{\gamma\in
DE(\Gamma)}\sum_{n=1}^{q_{\gamma}-1} \frac{1}{q_{\gamma}}
\int_{0}^{\infty}
\frac{e^{-u^2/4t}\cosh(u/2)}{\sinh^2(u/2)+\sin^2(n\pi/q_{\gamma})}du.
\end{equation}

\hskip 0.2in One of the staple ingredient in this paper is the convergence through elliptic degeneration of the regularized trace minus the degenerating trace on $M_q$ to the regularized trace on the limiting surface $M_{\infty}.$ The result below presents a two-fold aspect of this behavior, whose details can be found in Theorem 5.4 of \cite{GJ 16}.

\begin{thm}\label{pointwiseanduniformconvergence}
Let $M_{q}$ denote an elliptically degenerating family of
compact or non-compact hyperbolic Riemann surfaces of finite volume
converging to the non-compact hyperbolic surface $M_{\infty}.$ 
\begin{enumerate}[(a)]
\item (Pointwise) For fixed $z=t+is$ with $t>0$, we have
\begin{align*}
\lim_{q \to
\infty}[\text{\rm HTr}K_{M_{q}}(z)+\text{\rm ETr}K_{M_{q}}(z)-\text{\rm DTr}K_{M_{q}}(z)]
=\text{\rm HTr}K_{M_{\infty}}(z)+\text{\rm ETr}K_{M_{\infty}}(z).
\end{align*}
\item (Uniformity) For any $t>0$, there exists a constant $C$ (depending on $t$) such
that for all $s \in \mathbb{R}$ and all $q$, we have the bound
\begin{align*}
|\text{\rm HTr}K_{M_{q}}(z)+\text{\rm ETr}K_{M_{q}}(z)-\text{\rm DTr}K_{M_{q}}(z)| \le
C(1+|s|)^{3/2}.
\end{align*}
\end{enumerate}
\end{thm}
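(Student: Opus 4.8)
The plan is to dispose of the elliptic terms first and then reduce both assertions to a statement about the hyperbolic trace alone. Elliptic degeneration drives only the orders of the degenerating cone points to infinity; the remaining ``stable'' elliptic conjugacy classes, together with their orders, are unchanged and are precisely the elements of $E(\Gamma_\infty)$. Since the closed form (\ref{elliptictrace}) for the elliptic trace (equivalently, its Hejhal form (\ref{Hejhalelliptictrace})) depends only on the orders $q_\gamma$ of the elliptic classes present, and $\text{\rm DTr}K_{M_{q}}(z)$ is exactly the sub-sum over $DE(\Gamma_{q})$, one gets, for all $q$ large,
\[
\text{\rm ETr}K_{M_{q}}(z) - \text{\rm DTr}K_{M_{q}}(z) = \text{\rm ETr}K_{M_\infty}(z),
\]
an honest identity, not merely a limit. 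Moreover $|\text{\rm ETr}K_{M}(z)| \le \text{\rm ETr}K_{M}(t)$ and $|\text{\rm DTr}K_{M_{q}}(z)| \le \text{\rm DTr}K_{M_{q}}(t)$, directly from (\ref{Hejhalelliptictrace}) (and its restriction to $DE(\Gamma_{q})$), since $|e^{-zr^{2}}| = e^{-tr^{2}}$; these are $O_{t}(1)$ uniformly in $q$ by the Gaussian-integral bounds behind (\ref{elliptictraceasymp}) together with the convergence of $\text{\rm DTr}K_{M_{q}}(t)$ to the parabolic trace of the new cusps. Thus part (a) is equivalent to $\lim_{q\to\infty}\text{\rm HTr}K_{M_{q}}(z) = \text{\rm HTr}K_{M_\infty}(z)$, and in part (b) it is enough to bound $\text{\rm HTr}K_{M_{q}}(z)$.

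For (a) I would use the periodization $\text{\rm HTr}K_{M_{q}}(z) = \int_{M_{q}} HK_{M_{q}}(z,x)\,d\mu_{q}(x)$ and split $M_{q} = (M_{q}\setminus C_{q,\varepsilon})\sqcup C_{q,\varepsilon}$. Because no geodesic is pinched, the systole of $M_{q}$ is bounded below uniformly in $q$, so every $d_{\mathbb H}(\tilde x,\kappa^{-1}\gamma^{n}\kappa\tilde x)$ with $\gamma$ hyperbolic is bounded below, $HK_{M_{q}}(z,\cdot)$ is bounded uniformly in $x$ and $q$, and, since $\mathrm{vol}(C_{q,\varepsilon}) = m\varepsilon$, the thin contribution (and its analogue over $m\times C_{\infty,\varepsilon}$) is $O_{z}(\varepsilon)$. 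On the thick part the geometry of $M_{q}\setminus C_{q,\varepsilon}$ converges to that of $M_\infty\setminus(m\times C_{\infty,\varepsilon})$; applying Theorem \ref{convergenceheatkernels} through $HK_{M_{q}} = K_{M_{q}} - K_{\mathbb H}(\cdot,0) - PK_{M_{q}} - EK_{M_{q}}$ and the separate convergence of $PK$ and $EK$ there (the new cusps sit inside the thin part and do not affect this limit), together with $\mathrm{vol}(M_{q})\to\mathrm{vol}(M_\infty)$ from Gauss--Bonnet and a tail estimate near any old cusps, yields convergence of the thick-part integral. Letting $\varepsilon\downarrow 0$ proves (a).

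For (b) I would pass to the spectral side. By Definition \ref{standardheattrace} and the spectral forms of the standard trace — (\ref{regularizedtracecompactspectral}) in the compact case, (\ref{regularizedtracenoncompactspectral}) in the non-compact case — one has $\text{\rm HTr}K_{M_{q}}(z) = \text{\rm STr}K_{M_{q}}(z) - \text{\rm ETr}K_{M_{q}}(z) - \mathrm{vol}(M_{q})K_{\mathbb H}(z,0)$ with $\text{\rm STr}K_{M_{q}}(z)$ an absolutely convergent spectral expansion, each summand of which is bounded uniformly in $s$ and $q$: $|\sum_{n} e^{-\lambda_{n}z}| \le \sum_{n} e^{-\lambda_{n}t}$ is $O_{t}(1)$ by Weyl's law (\ref{Weyl'slaw}) and the uniform bound on $\mathrm{vol}(M_{q})$; the continuous-spectrum integrals $\int e^{-(r^{2}+1/4)z}\frac{\phi_{q}'}{\phi_{q}}(1/2+ir)\,dr$ and $\int e^{-(r^{2}+1/4)z}\frac{\Gamma'}{\Gamma}(1+ir)\,dr$ are dominated by $\int e^{-(r^{2}+1/4)t}|\cdots|\,dr = O_{t}(1)$ uniformly in $q$, via the convergence of the scattering determinants $\phi_{q}$ through degeneration and $\frac{\Gamma'}{\Gamma}(1+ir) = O(\log|r|)$; the explicit terms in (\ref{regularizedtracenoncompactspectral}) are $O_{t}(1)$; and $\mathrm{vol}(M_{q})K_{\mathbb H}(z,0)$ is $O_{t}((t^{2}+s^{2})^{-3/4})$ by Remark \ref{complexrealtime}. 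The only positive power of $(1+|s|)$ enters through the application of Remark \ref{complexrealtime} to the heat-kernel-on-$\mathbb H$ contribution, whose prefactor $(t^{2}+s^{2})^{3/4} \le C_{t}(1+|s|)^{3/2}$ already gives the asserted bound; combined with the elliptic estimates of the first paragraph this proves (b).

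The main obstacle is the uniformity in $q$ in part (b): for a single surface every spectral term is visibly bounded in $s$, but here one must know that the spectral data of $M_{q}$ — the $L^{2}$ eigenvalues, the volumes, $\text{\rm Tr}\,\Phi_{q}(1/2)$, and above all the scattering determinant and its logarithmic derivative on the critical line — are controlled uniformly as $q\to\infty$. The eigenvalue input is supplied by the convergence of small eigenvalues from \cite{GJ 16} together with Weyl's law; the delicate ingredient is a uniform-in-$q$ bound on $\phi_{q}'/\phi_{q}(1/2+ir)$, which must be extracted from the convergence of scattering matrices through elliptic degeneration and is the very phenomenon later quantified in Selberg's accumulation statement for the Hecke triangle groups.
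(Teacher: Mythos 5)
First, a point of comparison: this paper does not prove Theorem \ref{pointwiseanduniformconvergence} at all --- it is imported verbatim from Theorem 5.4 of the companion paper \cite{GJ 16} --- so your proposal can only be judged on its own merits. Your opening reduction is sound: the non-degenerating elliptic classes have orders that are eventually constant in $q$ and coincide with those of $\Gamma_\infty$, and since (\ref{Hejhalelliptictrace}) depends only on these orders, $\text{\rm ETr}K_{M_{q}}(z)-\text{\rm DTr}K_{M_{q}}(z)=\text{\rm ETr}K_{M_\infty}(z)$ for large $q$, so both parts reduce to statements about $\text{\rm HTr}K_{M_{q}}(z)$. Your sketch of part (a) by a thick--thin decomposition together with Theorem \ref{convergenceheatkernels} is the standard route, although the uniform boundedness of $HK_{M_{q}}(z,\cdot)$ in $x$ \emph{and} $q$ requires a uniform-in-$q$ count of group elements displacing a lift by a bounded amount, which you assert rather than establish.

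The genuine gap is in part (b). You bound the spectral-side pieces separately and claim each is $O_t(1)$ uniformly in $q$: that $\sum_n e^{-\lambda_{n,q}t}=O_t(1)$ ``by Weyl's law,'' that the scattering integral is uniformly dominated, and (earlier) that $\text{\rm DTr}K_{M_{q}}(t)=O_t(1)$ because it ``converges to the parabolic trace of the new cusps.'' All of these are false, and their failure is precisely the central phenomenon of this paper: by Proposition \ref{degeneratingellipticcountingprop}(c) and Theorems \ref{behaviorcountingfunctionscompact} and \ref{behaviorcountingfunctionsnoncompact}, the number of eigenvalues of $M_{q}$ below a fixed $T>1/4$ grows like $c_0(T)\log Q\to\infty$, hence $\sum_n e^{-\lambda_{n,q}t}\ge e^{-Tt}N_{M_{q},0}(T)\to\infty$; likewise $\text{\rm DTr}K_{M_{q}}(t)\asymp\log Q$ (this is exactly the estimate $S(q_\gamma)\sim\pi^{-1}\log q_\gamma$), and $-\tfrac{\phi_q'}{\phi_q}(1/2+ir)\ge 2\log q_{M_{q}}$ by the quoted Lemma 5.3 of \cite{HJL 97}, so the continuous-spectrum integral diverges as well. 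Weyl's law (\ref{Weyl'slaw}) is an asymptotic in $\lambda$ for a fixed surface and carries no uniformity in $q$. The entire point of subtracting $\text{\rm DTr}K_{M_{q}}$ is to cancel these $\log Q$ divergences; a term-by-term triangle-inequality bound destroys the cancellation and cannot produce a $q$-independent constant. Relatedly, your accounting of where $(1+|s|)^{3/2}$ comes from is confused: $\mathrm{vol}(M_{q})K_{\mathbb H}(z,0)$ is in fact bounded (indeed decaying) in $|s|$ for fixed $t$, as one sees from the first inequality of Remark \ref{complexrealtime} with $d=0$; the power $(1+|s|)^{3/2}$ arises from the prefactor $t^{-3/2}(t^2+s^2)^{3/4}$ when the complex-time geometric terms are compared with their real-time counterparts at the modified time $\tau=|z|^2/t$, applied to the already-regularized combination $\text{\rm HTr}+\text{\rm ETr}-\text{\rm DTr}$ as a whole. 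As written, part (b) does not establish the uniformity in $q$, which is the entire content of the statement.
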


Furthermore, one can derive the following bound for the difference of traces:  For $0< t <1$, there is a positive constant $C$ such that 
\begin{align*}
|\text{\rm HTr}K_{M_{q}}(z)+\text{\rm ETr}K_{M_{q}}(z)-\text{\rm DTr}K_{M_{q}}(z)|
\le Ct^{-2}(1+|s|)^{3/2}\,\,,
\end{align*}
holds (c.f. Remark 5.5 of \cite{GJ 16}.)

\hskip 0.2in As a consequence of Theorem \ref{pointwiseanduniformconvergence}, we have the following corollary, which describes the small time asymptotic behavior for the regularized trace of the heat kernel.

\begin{cor}\label{smalltasymptotics}
Let $M_{q}$ denote an elliptically degenerated family of
compact or non-compact hyperbolic Riemann surfaces of finite volume
which converges to the non-compact hyperbolic surface $M_{\infty}$.
Then for any fixed $\delta>0$, there exists a positive constant $c$
such that for all $0<t<\delta$, we have
\begin{align*}
\text{\rm HTr}K_{M_{q}}(t)+ \text{\rm ETr}K_{M_{q}}(t) - \text{\rm DTr}K_{M_{q}}(t) =
O\left(t^{-3/2}\right)\,\,
\end{align*}
uniformly in $q$.
\end{cor}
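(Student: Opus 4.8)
The plan is to estimate separately the two natural pieces of $\mathrm{HTr}K_{M_q}(t)+\mathrm{ETr}K_{M_q}(t)-\mathrm{DTr}K_{M_q}(t)$, using the explicit geometric expressions \eqref{hyperbolictrace}, \eqref{elliptictrace}, \eqref{degeneratingtrace} together with the defining features of an elliptically degenerating family (and, for the tail $t$ close to $\delta$, the uniform convergence in Theorem \ref{pointwiseanduniformconvergence}). In fact the argument will show the quantity is $O(1)$ uniformly in $q$, which a fortiori gives the stated $O(t^{-3/2})$.

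First I would handle $\mathrm{ETr}K_{M_q}(t)-\mathrm{DTr}K_{M_q}(t)$, which by \eqref{elliptictrace} and \eqref{degeneratingtrace} is precisely the elliptic trace restricted to the non-degenerating primitive elliptic classes $E(\Gamma_q)\setminus DE(\Gamma_q)$. By construction of the family the orders $q_\gamma$ of the surviving elliptic elements stay bounded — for $q$ large they are exactly the elliptic orders of $M_\infty$ — so there is an integer $Q$ with $q_\gamma\le Q$ for all such $\gamma$ and all $q$, whence $\sin^2(n\pi/q_\gamma)\ge\sin^2(\pi/Q)>0$ for every relevant $n$. Bounding each inner integral in \eqref{elliptictrace} by $\int_0^\infty e^{-u^2/4t}\cosh(u/2)\bigl(\sinh^2(u/2)+\sin^2(\pi/Q)\bigr)^{-1}du$, splitting the range at $u=1$ and using $\cosh(u/2)/\sinh^2(u/2)=O(e^{-u/2})$ for $u\ge1$, each integral is $O(\sqrt t)$ uniformly in $q$ and $t\in(0,\delta)$. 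Multiplying by $e^{-t/4}/\sqrt{16\pi t}=O(t^{-1/2})$ and summing the boundedly many terms gives $\mathrm{ETr}K_{M_q}(t)-\mathrm{DTr}K_{M_q}(t)=O(1)$ uniformly in $q$; this is the mechanism behind \eqref{elliptictraceasymp}, now made uniform.

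Next I would handle $\mathrm{HTr}K_{M_q}(t)$. Since an elliptically degenerating family pinches no closed geodesic, the systole is bounded below uniformly, $\ell_\gamma\ge\ell_0>0$ for all primitive hyperbolic $\gamma\in\Gamma_q$ and all $q$, and $\mathrm{vol}(M_q)$ stays bounded; hence the number of closed geodesics (with multiplicity) of length $\le L$ is $O(e^L)$ uniformly in $q$. In \eqref{hyperbolictrace} I would write $e^{-(n\ell_\gamma)^2/4t}\le e^{-\ell_0^2/8t}\,e^{-(n\ell_\gamma)^2/8t}$ using $n\ell_\gamma\ge\ell_0$, and for $t<1$ bound $e^{-(n\ell_\gamma)^2/8t}\le e^{-(n\ell_\gamma)^2/8}$; together with $\ell_\gamma/\sinh(n\ell_\gamma/2)\le2$ and the uniform geodesic count, the remaining sum is bounded by an absolute constant. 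Thus $|\mathrm{HTr}K_{M_q}(t)|\le C\,t^{-1/2}e^{-\ell_0^2/8t}=O(e^{-c/t})$ uniformly in $q$ and $t\in(0,\delta)$ (the subinterval $t\in[1,\delta)$, if nonempty, being handled by the uniform boundedness of traces on compact time intervals from Theorem \ref{pointwiseanduniformconvergence}); this refines \eqref{hyperbolictraceasymp} to a uniform statement. Adding the two estimates gives $\mathrm{HTr}K_{M_q}(t)+\mathrm{ETr}K_{M_q}(t)-\mathrm{DTr}K_{M_q}(t)=O(1)=O(t^{-3/2})$ on $(0,\delta)$, uniformly in $q$.

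I expect the main obstacle to be not any single estimate but securing the two uniformities in $q$: the lower bound on the systole of $M_q$ and the upper bound $Q$ on the orders of the surviving elliptic points. Both are consequences of the definition of an elliptically degenerating family — the geometry of $M_q\setminus C_{q,\varepsilon}$ converges and the volumes $\mathrm{vol}(M_q)$ are bounded — and both are in any case already implicit in Theorem \ref{pointwiseanduniformconvergence} and its accompanying bound; once they are in hand, the two kernel estimates above are routine.
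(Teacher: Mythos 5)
Your argument is correct in outline, but it takes a different route from the paper: here the corollary is not proved at all but is recalled from the companion paper \cite{GJ 16} as a consequence of Theorem \ref{pointwiseanduniformconvergence}, whose constant $C=C(t)$ (cf.\ the bound $Ct^{-2}(1+|s|)^{3/2}$ quoted from Remark 5.5 of \cite{GJ 16}) carries the $t$-dependence that produces the stated $O(t^{-3/2})$. You instead estimate the geometric expansions (\ref{hyperbolictrace}), (\ref{elliptictrace}), (\ref{degeneratingtrace}) term by term, and this buys a genuinely sharper conclusion: $\text{\rm ETr}K_{M_q}(t)-\text{\rm DTr}K_{M_q}(t)$ is exactly the elliptic trace over the surviving classes, whose orders and number are eventually constant, so the substitution $u\mapsto\sqrt{t}\,v$ (or your split at $u=1$) gives $O(\sqrt t)$ for each inner integral and hence $O(1)$ after the $t^{-1/2}$ prefactor, uniformly in $q$; and the hyperbolic trace is $O(e^{-c/t})$ uniformly once the systole is bounded below and the geodesic count is uniform. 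This $O(1)$ bound is consistent with, and is the uniform version of, the single-surface asymptotics (\ref{hyperbolictraceasymp}) and (\ref{elliptictraceasymp}), and it trivially implies $O(t^{-3/2})$ on $(0,\delta)$. The one point you should not gloss over is precisely the one you flag: the uniform lower bound on the systole of $M_q$ and the uniform bound on the orders and number of the non-degenerating elliptic classes are not established in this paper either (the definition of elliptic degeneration in Section \ref{Geometry of elliptic degeneration} is stated informally); they are inputs imported from \cite{GJ 16}, exactly as the paper itself imports Theorem \ref{pointwiseanduniformconvergence}. Granting those two geometric facts, your estimates are routine and correct, so the proposal stands as a legitimate, more explicit, and in fact stronger derivation than the citation-based one given here.
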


\hskip 0.2in Aside from the asymptotics  near $t=0$, we also need the behavior of the trace for large values of the time parameter $t$. In this direction, we need the following definition and theorem which we recall from Section 7 of \cite{GJ 16} where all details are presented. \begin{defn}\label{alphatruncatedtrace}
Let $M_{q}$ be an elliptically degenerating family of compact
or non-compact hyperbolic Riemann surfaces of finite volume which
converge to the non-compact hyperbolic surface $M_{\infty}$. Let
$0\le \alpha  < 1/4$ be such that $\alpha$ is not an eigenvalue of
$M_{\infty}$. We defined the $\alpha$-truncated hyperbolic and elliptic trace by
\begin{align*}
\text{\rm HTr}K_{M_{q}}^{(\alpha)}(t) + \text{\rm ETr}K_{M_{q}}^{(\alpha)}(t) =
\text{\rm HTr}K_{M_{q}}(t) + \text{\rm ETr}K_{M_{q}}(t) -
\sum_{\lambda_{q,n}\le \alpha} e^{-\lambda_{q,n}t}.
\end{align*}
\end{defn}

\begin{thm}\label{uniformlongtimeasymptotics}
Let $M_{q}$ be an elliptically degenerating family of compact
or non-compact hyperbolic Riemann surfaces of finite volume which
converge to the non-compact hyperbolic surface $M_{\infty}$. Let $\alpha$ be given according to  the Definition \ref{alphatruncatedtrace} above.
Then for any $c < \alpha$, there exist a constant $C$ such that the
bound
\begin{align*}
|\text{\rm HTr}K_{M_{q}}^{(\alpha)}(t) + \text{\rm ETr}K_{M_{q}}^{(\alpha)}(t) -
\text{\rm DTr}K_{M_{q}}(t)| \le C e^{-ct}
\end{align*}
holds for all $t\ge 0$ and uniformly in $q$.
\end{thm}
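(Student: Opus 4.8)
The plan is to split the time axis at a fixed value $t_0$ (say $t_0=1$), dispose of $0\le t\le t_0$ by crude uniform bounds, and on $t\ge t_0$ realise the truncated trace as the Laplace transform of a signed measure whose support is bounded away from $\alpha$, then convert the uniform polynomial bound on vertical lines supplied by Theorem~\ref{pointwiseanduniformconvergence}(b) into exponential decay by a Paley--Wiener type estimate. Write $g_0(t)=\text{HTr}K_{M_q}(t)+\text{ETr}K_{M_q}(t)-\text{DTr}K_{M_q}(t)$ and
$g(t)=g_0(t)-\sum_{\lambda_{q,n}\le\alpha}e^{-\lambda_{q,n}t}=\text{HTr}K_{M_q}^{(\alpha)}(t)+\text{ETr}K_{M_q}^{(\alpha)}(t)-\text{DTr}K_{M_q}(t)$, the quantity to be bounded.

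First I would record a Laplace representation. Combining the geometric side (\ref{regularizedtracegeometric}) (resp.\ its non-compact analogue with the scattering, digamma and $e^{-t/4}$ terms from (\ref{regularizedtracenoncompactspectral})) with the spectral side and with the explicit formulas (\ref{heatkernelonH0}), (\ref{Hejhalelliptictrace}) for the identity and degenerating contributions, one sees that for $t>0$
\begin{align*}
g(t)=\int_{[\alpha+\varepsilon_0,\infty)} e^{-\mu t}\,d\omega_q(\mu),
\end{align*}
where $\omega_q$ is a locally finite signed measure whose discrete part is $\sum_{\lambda_{q,n}>\alpha}\delta_{\lambda_{q,n}}$, while the identity, degenerating and (non-compact) continuous-spectrum / scattering / gamma-factor contributions all live on $[1/4,\infty)$. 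The crucial structural point is a uniform spectral gap above $\alpha$: since $\alpha$ is not an eigenvalue of $M_\infty$ and, by the convergence of small eigenvalues established in \cite{GJ 16}, the eigenvalues of $M_q$ below $1/4$ converge to those of $M_\infty$, there is $\varepsilon_0\in(0,1/4-\alpha)$, independent of $q$ for $q$ large, with no eigenvalue of $M_q$ in $(\alpha,\alpha+\varepsilon_0)$; hence $\operatorname{supp}\omega_q\subset[\alpha+\varepsilon_0,\infty)$ uniformly. Since $\int e^{-\mu t_0}\,d|\omega_q|(\mu)\le \text{STr}K_{M_q}(t_0)+\text{ITr}K_{M_q}(t_0)+\text{DTr}K_{M_q}(t_0)<\infty$, the measure $\widetilde\omega_q:=e^{-\mu t_0}\omega_q$ is a finite signed measure with $\widehat{\widetilde\omega_q}(s)=g_0(t_0+is)-\sum_{\lambda_{q,n}\le\alpha}e^{-\lambda_{q,n}(t_0+is)}$. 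Although $|\widetilde\omega_q|(\mathbb R)$ is not bounded in $q$ (it grows like $\log\prod q_\gamma$ because of $\text{DTr}$), Theorem~\ref{pointwiseanduniformconvergence}(b), together with the uniform bound on the number of eigenvalues $\le\alpha$, gives $|\widehat{\widetilde\omega_q}(s)|\le C(1+|s|)^{3/2}$ uniformly in $q$; this is the analytic input encoding the cancellation.

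Next, for $v\ge 0$ one has $g(t_0+v)=\int e^{-\mu v}\,d\widetilde\omega_q(\mu)=\int\chi(\mu)e^{-\mu v}\,d\widetilde\omega_q(\mu)$, where $\chi\in C^\infty(\mathbb R)$ is chosen with $\chi\equiv1$ on $[\alpha+\varepsilon_0,\infty)$ and $\chi\equiv0$ on $(-\infty,\alpha+\varepsilon_0/2]$. The function $\phi_v:=\chi(\cdot)e^{-\cdot\,v}$ is Schwartz, so Parseval yields $g(t_0+v)=\tfrac1{2\pi}\int_{\mathbb R}\widehat{\phi_v}(s)\,\widehat{\widetilde\omega_q}(-s)\,ds$. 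Integrating by parts $k$ times shows $\|\phi_v^{(k)}\|_{L^1}\lesssim_{k,\varepsilon_0}(1+v)^k e^{-(\alpha+\varepsilon_0/2)v}$, hence $|\widehat{\phi_v}(s)|\lesssim(1+v)^k e^{-(\alpha+\varepsilon_0/2)v}\min(1,|s|^{-k})$; taking $k=3$,
\begin{align*}
|g(t_0+v)|\le\frac{C}{2\pi}\int_{\mathbb R}(1+|s|)^{3/2}\,|\widehat{\phi_v}(s)|\,ds\le C'\,(1+v)^3\,e^{-(\alpha+\varepsilon_0/2)v}
\end{align*}
uniformly in $q$ for $v\ge1$, so $|g(t)|\le C''e^{-ct}$ for $t\ge t_0+1$ since $c<\alpha<\alpha+\varepsilon_0/2$. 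On $0\le t\le t_0+1$ the quantity $g(t)$ is bounded uniformly in $q$: for $t_0/2\le t\le t_0+1$ this is Theorem~\ref{pointwiseanduniformconvergence}(b) at $s=0$, and for $0<t\le t_0/2$ one uses $g(t)=\text{HTr}K_{M_q}(t)+(\text{ETr}K_{M_q}(t)-\text{DTr}K_{M_q}(t))-\sum_{\lambda_{q,n}\le\alpha}e^{-\lambda_{q,n}t}$, where $\text{HTr}K_{M_q}(t)=O(e^{-c/t})$ and $\text{ETr}K_{M_q}(t)-\text{DTr}K_{M_q}(t)$ is the heat trace of the fixed set of non-degenerating elliptic classes (bounded, uniformly in $q$, by $\tfrac1{12}e^{-t/4}\sum_{\gamma\in E(\Gamma)\setminus DE(\Gamma)}(q_\gamma-q_\gamma^{-1})$), and the last sum is controlled by the uniformly bounded count of small eigenvalues. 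Absorbing the bounded range into the constant (using $e^{-ct}\ge e^{-c(t_0+1)}$ there) gives $|g(t)|\le Ce^{-ct}$ for all $t\ge0$, uniformly in $q$ large; the finitely many remaining surfaces are classical.

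The main obstacle is precisely the exponential-decay step, and the genuinely new ingredient there is not the Fourier/Parseval manipulation (which is soft) but the combination of the \emph{uniform} vertical-line bound of Theorem~\ref{pointwiseanduniformconvergence}(b) — the fact that the \emph{difference} $\text{HTr}+\text{ETr}-\text{DTr}$ has controlled Fourier transform even though the total variation of $\widetilde\omega_q$ (and the individual traces) diverge with $q$ — with the uniform spectral gap $\varepsilon_0$ above $\alpha$ coming from convergence of small eigenvalues; this is what makes the estimate run uniformly in $q$. The secondary technical point is the bookkeeping of the non-compact case: one must verify that each of the scattering-determinant, $\Gamma'/\Gamma$, and $e^{-t/4}$–type terms in (\ref{regularizedtracenoncompactspectral}) is a Laplace transform of a (signed) measure supported in $[1/4,\infty)$, so that they are absorbed harmlessly into $\omega_q$ and introduce no singularity in the strip to the left of $\operatorname{Re}=\alpha$.
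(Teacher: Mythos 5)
The first thing to say is that the paper you were given does not actually prove this theorem: it is stated as a result ``recalled from Section 7 of \cite{GJ 16}'', so there is no in-paper proof to compare against. Judged on its own terms, your argument is essentially sound, and it correctly isolates the one genuinely delicate point: the individual quantities $\mathrm{HTr}K_{M_q}+\mathrm{ETr}K_{M_q}$ and $\mathrm{DTr}K_{M_q}$ (equivalently, the total variation of your measure $\widetilde\omega_q$) diverge like $\log\prod q_\gamma$ as $q\to\infty$, so no term-by-term estimate of the form $\sum_{\lambda>\alpha}e^{-\lambda t}\le e^{-\alpha(t-1)}\,\mathrm{STr}K_{M_q}(1)$ can be uniform in $q$. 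The only uniform analytic input available is the vertical-line bound of Theorem \ref{pointwiseanduniformconvergence}(b) on the \emph{difference}, and your Parseval/Paley--Wiener step is a legitimate way to convert that polynomial bound at a single abscissa $t_0$ into exponential decay, given that the underlying signed measure is supported in $(\alpha,\infty)$.

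Three points deserve tightening. First, the uniform spectral gap $(\alpha,\alpha+\varepsilon_0)$ is not needed and is better avoided: since the support of $\omega_q$ is already contained in $\{\lambda_{q,n}:\lambda_{q,n}>\alpha\}\cup[1/4,\infty)\subset(\alpha,\infty)$, you may take $\chi\equiv 1$ on $[\alpha,\infty)$ and $\chi\equiv 0$ on $(-\infty,(c+\alpha)/2]$, which yields decay at the rate $(c+\alpha)/2>c$ without invoking convergence of small eigenvalues; that convergence is itself a theorem of \cite{GJ 16} whose proof there may well depend on the present long-time bound, so leaning on it risks circularity in the companion paper even though it creates none in the logical order of the present one. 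Second, your uniform bound on the compact range $[t_0/2,t_0+1]$ should be drawn from Corollary \ref{smalltasymptotics} (which is uniform in $q$ for all $0<t<\delta$) rather than from Theorem \ref{pointwiseanduniformconvergence}(b) ``at $s=0$'', whose constant is only asserted for each fixed $t$. Third, the claim that $\mathrm{HTr}K_{M_q}(t)=O(e^{-c/t})$ uniformly in $q$ requires a uniform lower bound on the length spectrum and uniform counting of closed geodesics along the degenerating family; this does hold for elliptic degeneration (no geodesics are pinched, and lengths converge to those of $M_\infty$), but it is a geometric input from the setup of \cite{GJ 16} that should be cited rather than assumed silently. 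With these repairs the argument is complete and uniform in all $q$, not merely in $q$ large.
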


\section{Asymptotics of spectral measures}
\label{Asymptotics of spectral measures}

\hskip 0.2in We start the section with some general remarks on the
Laplace transforms of a function. This material can also be found in
\cite{HJL 97}. However, to make the reading self contained we
present the material below.

\hskip 0.20in For any function $f(t)$ defined on the positive real
line, we formally define the Laplace transform and cumulative
distribution function to be
\begin{align*}
\mathscr{L} (f)(z) = \int_{0}^{\infty} e^{-zt}f(t)dt \quad
\textrm{and} \quad F(t) = \int_{0}^{t}f(u)du.
\end{align*}
The Laplace transform $\mathscr{L}(f)$ of $f$ exists, if say $f(t)$
is a piecewise continuous, real-valued function for $0\le t <\infty$
and for some constants $M$ and $c$ we have that $|f(t)|\le Me^{a_0 t}$.
Then the Laplace transform will make sense in a some right half
plane $\textrm{Re}(z)>a_0$. The inversion formula for the Laplace transform allows us to write
\begin{align*}
f(t) = \frac{1}{2\pi i}\int_{a-i\infty}^{a+i\infty}
e^{tz}\mathscr{L}(f)(z)dz \quad \textrm{and} \quad F(t)
=\frac{1}{2\pi i}
\int_{a-i\infty}^{a+i\infty}e^{tz}\mathscr{L}(f)(z)\frac{dz}{z}.
\end{align*}
which holds for any $a>a_0$.

\begin{rmk}
\label{assumptionLaplacetransform}
\emph{
We will assume that $f$ is such that its Laplace
transform exists and the inversion formula holds. Furthermore, we
will need the following basic assumption
\begin{align*}
\int_{a-i\infty}^{a+i\infty}(1+|s|)^{3/2}|\mathscr{L}(f)(z)||e^{zT}|\frac{|dz|}{|z|}
<\infty
\end{align*}
where $z = t+is$ and $a$ is some positive number.
}
\end{rmk}

\hskip 0.2in As an application of the convergence of the regularized
trace of the heat kernel, we have the following theorem which is
analogous with Theorem 2.1 of \cite{HJL 97}. Note that the theorem
in \cite{HJL 97} deals with hyperbolic degeneration whereas the
theorem below deals with elliptic degeneration.

\begin{thm}\label{convergencespectralmeasure}
Let $M_{q}$ be an elliptically degenerating family of compact
or non-compact hyperbolic Riemann surfaces of finite volume
converging to the non-compact hyperbolic surface $M_{\infty}$. Let
$f$ be any function which satisfies the above assumption. Let $z = t+is$ with $t>0$ and denote by
\begin{align*}
N_{M_{q}}(f)(T) = \frac{1}{2\pi
i}\int_{a-i\infty}^{a+i\infty}\mathscr{L}(f)(z)\mathrm{Str}K_{M_{q}}(z)e^{zT}\frac{dz}{z}
\end{align*}
and
\begin{align*}
N_{M_{q},D}(f)(T) = \frac{1}{2\pi
i}\int_{a-i\infty}^{a+i\infty}\mathscr{L}(f)(z)\mathrm{Dtr}K_{M_{q}}(z)e^{zT}\frac{dz}{z}.
\end{align*}
Then
\begin{align*}
\lim_{q \to \infty}
[N_{M_{q}}(f)(T)-N_{M_{q},D}(f)(T)]
=N_{M_{\infty}}(f)(T)\,\,.
\end{align*}
\end{thm}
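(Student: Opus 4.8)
\emph{Plan of proof.} The plan is to run a dominated-convergence argument on the inversion contour $\mathrm{Re}(z)=a$, with Theorem \ref{pointwiseanduniformconvergence} supplying both the pointwise limit of the integrand and a $q$-independent integrable majorant. First I would write the difference of the two transforms as a single contour integral,
\begin{align*}
N_{M_{q}}(f)(T)-N_{M_{q},D}(f)(T)=\frac{1}{2\pi i}\int_{a-i\infty}^{a+i\infty}\mathscr{L}(f)(z)\big[\mathrm{Str}K_{M_{q}}(z)-\mathrm{Dtr}K_{M_{q}}(z)\big]e^{zT}\frac{dz}{z},
\end{align*}
and then, using Definition \ref{standardheattrace} (which applies to the connected surfaces $M_{q}$ and $M_{\infty}$ alike) together with the formula (\ref{degeneratingtrace}) for $\mathrm{DTr}$, decompose the bracketed quantity as
\begin{align*}
\mathrm{Str}K_{M_{q}}(z)-\mathrm{Dtr}K_{M_{q}}(z)=\big[\mathrm{HTr}K_{M_{q}}(z)+\mathrm{ETr}K_{M_{q}}(z)-\mathrm{DTr}K_{M_{q}}(z)\big]+\mathrm{vol}(M_{q})K_{\mathbb{H}}(z,0).
\end{align*}

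For the pointwise limit, I would fix $z=a+is$ on the contour, so that $\mathrm{Re}(z)=a>0$ is held fixed as $q\to\infty$. Part (a) of Theorem \ref{pointwiseanduniformconvergence} gives that the bracketed trace difference tends to $\mathrm{HTr}K_{M_{\infty}}(z)+\mathrm{ETr}K_{M_{\infty}}(z)$. For the remaining summand, $\mathrm{vol}(M_{q})\to\mathrm{vol}(M_{\infty})$ by Gauss--Bonnet --- the orbifold Euler characteristic of $M_{q}$ converges to that of $M_{\infty}$ as the degenerating cone orders tend to infinity, a cone point of order $q_{k}$ contributing $1-1/q_{k}$ and its limiting cusp contributing $1$ --- while $K_{\mathbb{H}}(z,0)$ is a fixed analytic function of $z$, so $\mathrm{vol}(M_{q})K_{\mathbb{H}}(z,0)\to\mathrm{vol}(M_{\infty})K_{\mathbb{H}}(z,0)$. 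Adding the two pieces and recognizing the limit, via Definition \ref{standardheattrace} applied to $M_{\infty}$, as $\mathrm{Str}K_{M_{\infty}}(z)$, I get that the integrand converges pointwise to $\mathscr{L}(f)(z)\,\mathrm{Str}K_{M_{\infty}}(z)\,e^{zT}/z$, which is exactly the integrand defining $N_{M_{\infty}}(f)(T)$.

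For the majorant, Part (b) of Theorem \ref{pointwiseanduniformconvergence} bounds the bracketed trace difference by $C(1+|s|)^{3/2}$ with $C$ depending only on $a$; and on the line $\mathrm{Re}(z)=a$ the diagonal heat kernel is controlled by analytically continuing the integral representation (\ref{heatkernelonH0}), which gives $|K_{\mathbb{H}}(z,0)|\le\frac{1}{2\pi}\int_{0}^{\infty}e^{-(1/4+r^{2})a}r\,dr=\frac{e^{-a/4}}{4\pi a}$, a constant, so that $\mathrm{vol}(M_{q})K_{\mathbb{H}}(z,0)$ is bounded uniformly in $q$ and $s$ (the volumes being convergent, hence bounded). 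Therefore $|\mathrm{Str}K_{M_{q}}(z)-\mathrm{Dtr}K_{M_{q}}(z)|\le C'(1+|s|)^{3/2}$ for all $q$, and the integrand is dominated by $C'(1+|s|)^{3/2}|\mathscr{L}(f)(z)||e^{zT}|/|z|$, which is integrable over the contour by the standing hypothesis on $f$ recorded in Remark \ref{assumptionLaplacetransform}; the dominated convergence theorem then yields $\lim_{q\to\infty}[N_{M_{q}}(f)(T)-N_{M_{q},D}(f)(T)]=N_{M_{\infty}}(f)(T)$. The substantive analytic content --- the pointwise convergence and the polynomial-in-$|s|$ bound for the regularized minus degenerating trace --- is precisely Theorem \ref{pointwiseanduniformconvergence}, imported from \cite{GJ 16}, so the only points that still require attention are the elementary bound for $K_{\mathbb{H}}(z,0)$ along the vertical line, the convergence of the volumes, and the bookkeeping that identifies the limiting integrand with the \emph{geometric} standard trace of $M_{\infty}$.
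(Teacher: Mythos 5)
Your proposal is correct and follows essentially the same route as the paper: write the difference as a single contour integral and apply dominated convergence, with part (a) of Theorem \ref{pointwiseanduniformconvergence} giving the pointwise limit, part (b) giving the $(1+|s|)^{3/2}$ majorant, and the standing hypothesis of Remark \ref{assumptionLaplacetransform} giving integrability. The only difference is that you explicitly split off and control the identity term $\mathrm{vol}(M_{q})K_{\mathbb{H}}(z,0)$ (via convergence of volumes and a bound for $K_{\mathbb{H}}(z,0)$ on the contour), a detail the paper leaves implicit since Theorem \ref{pointwiseanduniformconvergence} only addresses $\mathrm{HTr}+\mathrm{ETr}-\mathrm{DTr}$; this is a welcome but minor refinement, not a different argument.
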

\begin{proof}
Consider the sequence of functions $g_{q}(z)$ given by
\begin{align*}
g_{q}(z) &= \mathscr{L}(f)(z) \left[ \textrm{Str}K_{M_{q}}(z)-\textrm{Dtr}K_{M_{q}}(z)\right] \frac{e^{zT}}{z}\\
g_{\infty}(z) &= \mathscr{L}(f)(z)  \textrm{Str}K_{M_{\infty}}(z) \frac{e^{zT}}{z}\,\,.
\end{align*}
We need to show that
\begin{align*}
\lim_{q\to \infty} \frac{1}{2\pi i} \int_{a-i\infty}^{a+i\infty} g_{q}(z) dz = \frac{1}{2\pi i} \int_{a-i\infty}^{a+i\infty} g_{\infty}(z)dz\,\,.
\end{align*}

As $q$ runs off to infinity, using part (a) of Theorem \ref{pointwiseanduniformconvergence}, $g_{q}(z)$ converges pointwise to $g_{\infty}(z)$ whenever $t = \text{Re\,\,}(z)>0.$ Using part (b) of the very same theorem, we also get that the functions are bounded uniformly, that is
\begin{align*}
|g_{q}(z)| \le |\mathscr{L}(f)(z)| (1+|s|)^{3/2} \left|\frac{e^{zT}}{z}\right|\,\,.
\end{align*}
Furthermore, the assumption on $f$ coming from the Remark \ref{assumptionLaplacetransform} requires that the right-hand side of the above inequality is integrable on vertical lines. All the hypotheses of the dominated convergence theorem are met, so that we can interchange the limit and the integration.
\end{proof}

\section{Convergence of spectral counting functions and small eigenvalues}
\label{Convergence of spectral counting functions and small eigenvalues}

\hskip 0.2in In this section, we will make use of the Theorem
\ref{convergencespectralmeasure} as applied to a particular family
of test functions which come from analytic number theory and
spectral theory. In this particular case, the functions mentioned in
Theorem \ref{convergencespectralmeasure} will be called spectral
weighted counting functions with parameter $w \ge 0$. For these
functions  and their associated degenerating component, we can explicitly determine the asymptotic behavior  for fixed $T> 0$ and all
$w \ge 0$.

\hskip 0.2in Consider the following family of
functions with parameter $w\ge 0$
\begin{align*}
f_w(t)=(w+1)t^w.
\end{align*}
It follows immediately that the corresponding Laplace transform and
cumulative distribution are given by
\begin{align*}
\mathscr{L}(f_w)(z) = \frac{\Gamma(w+2)}{z^{w+1}} \quad \textrm{and}
\quad F_w(t) = t^{w+1}
\end{align*}
respectively. With these remarks in mind, we can now define the
\emph{regularized} weighted spectral  counting function on a
hyperbolic Riemann surface $M$ by
\begin{align*}
N_{M,w+1}(T) = N_{M}(f_w(t))(T)=\frac{1}{2\pi
i}\int_{a-i\infty}^{a+i\infty}\frac{\Gamma(w+2)}{z^{w+1}}\textrm{Str}K_{M}(z)e^{zT}\frac{dz}{z}.
\end{align*}
In a similar fashion, we define the \emph{degenerating elliptic} weighted spectral
counting functions on the family $M_{q}$, by using $\textrm{Dtr}K_{M_{q}}(z)$ in lieu of $\textrm{Str}K_{M_{q}}(z).$
By Theorem \ref{pointwiseanduniformconvergence}, these weighted spectral counting functions are defined for values of the parameter
$w>3/2$.

\hskip 0.2in If the surface $M$ is compact, the regularized trace \emph{equals} the trace of the heat kernel (see the Remark \ref{Selbergtraceformula}). Using the spectral side of the Selberg trace formula (see equation (\ref{regularizedtracecompactspectral})) together with the mechanism of the inversion formula for the Laplace transforms, one can show that
\begin{align}\label{countingfunctioncompact}
N_{M,w}(T) = \sum_{\lambda_n \le T} (T-\lambda_n)^w\,\,.
\end{align}
In the non-compact case, the regularized trace \emph{equals} the trace of the heat kernel minus the contribution to the trace of the parabolic conjugacy classes. Using the spectral side of the trace as given by  equation (\ref{regularizedtracenoncompactspectral})together with the inversion formula, we obtain
\begin{align}\label{countingfunctionnoncompact}
\notag N_{M,w}(T) =& \sum_{\lambda_n\le T} (T-\lambda_n)^{w} -
\frac{1}{4\pi}\int_{-\sqrt{T-1/4}}^{\sqrt{T-1/4}}(T-1/4-r^2)^w
\frac{\phi'}{\phi}(1/2+ir)dr\\
\notag &+\frac{p}{2\pi}\int_{-\sqrt{T-1/4}}^{\sqrt{T-1/4}}(T-1/4-r^2)^w
\frac{\Gamma'}{\Gamma}(1+ir)dr\\
\notag &-\frac{1}{4}(p-\mathrm{Tr}\Phi(1/2))(T-1/4)^w\\
 &+\frac{p\log 2\,\,\Gamma(w+1)}{\sqrt{4\pi }\Gamma(w+3/2)}(T-1/4)^{w+1/2}\,\,,
\end{align}
whenever $T>1/4$, and
\begin{align*}
N_{M,w}(T) = \sum_{\lambda_n \le T} (T-\lambda_n)^w\
\end{align*}
if $T\le 1/4.$

\hskip 0.2in As a direct application of Theorem
\ref{convergencespectralmeasure} we have the following result.

\begin{thm}\label{convergencecountingfunctions}
Let $M_{q}$ denote an elliptically degenerating family of
compact or non-compact hyperbolic Riemann surfaces of finite volume
converging to the non-compact hyperbolic surface $M_{\infty}.$ For
any $w>3/2$ define
\begin{align*}
G_{M_{q},w}(T)= N_{M_{q},D}(f_{w-1}(t))(T)=\frac{1}{2\pi
i}\int_{a-i\infty}^{a+i\infty}\frac{\Gamma(w+1)}{z^{w}}\mathrm{Dtr}K_{M_{q}}(z)e^{zT}\frac{dz}{z}.
\end{align*}
Then for $T>0$ we have that
\begin{align*}
\lim_{q \to \infty}
[N_{M_{q},w}(T)-G_{M_{q},w}(T)]= N_{M_{\infty},w}(T)\,\,.
\end{align*}
\end{thm}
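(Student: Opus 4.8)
The plan is to apply Theorem \ref{convergencespectralmeasure} directly with the test function $f = f_{w-1}$, after checking that this $f$ satisfies the hypothesis from Remark \ref{assumptionLaplacetransform}. First I would recall that $\mathscr{L}(f_{w-1})(z) = \Gamma(w+1)/z^{w}$, so that with $z = a+is$ on the vertical line $\mathrm{Re}(z) = a$ we have $|\mathscr{L}(f_{w-1})(z)| = \Gamma(w+1)|z|^{-w}$. The integrability condition to verify is
\begin{align*}
\int_{a-i\infty}^{a+i\infty}(1+|s|)^{3/2}\,|\mathscr{L}(f_{w-1})(z)|\,|e^{zT}|\,\frac{|dz|}{|z|}
= \Gamma(w+1)\,e^{aT}\int_{-\infty}^{\infty}\frac{(1+|s|)^{3/2}}{(a^2+s^2)^{(w+1)/2}}\,ds,
\end{align*}
which converges precisely when $w+1 - 3/2 > 1$, i.e. $w > 3/2$. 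This is exactly the range of $w$ assumed in the statement, so the hypothesis of Theorem \ref{convergencespectralmeasure} is met. Note $N_{M_q}(f_{w-1})(T)$ in the notation of that theorem is by definition $N_{M_q,w}(T)$, and $N_{M_q,D}(f_{w-1})(T)$ is $G_{M_q,w}(T)$, while $N_{M_\infty}(f_{w-1})(T) = N_{M_\infty,w}(T)$.

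Applying Theorem \ref{convergencespectralmeasure} with $f = f_{w-1}$ then yields immediately
\begin{align*}
\lim_{q\to\infty}\big[N_{M_q,w}(T) - G_{M_q,w}(T)\big] = N_{M_\infty,w}(T),
\end{align*}
which is the claimed identity. The only genuine point to address beyond this bookkeeping is the justification that the contour integrals defining $N_{M_q,w}(T)$ and $G_{M_q,w}(T)$ coincide with the closed-form spectral expressions recorded in \eqref{countingfunctioncompact} and \eqref{countingfunctionnoncompact} — but this is not needed for the limit statement itself, and in any case follows from the Laplace inversion formula applied termwise to the spectral side of the trace formula, using that $\mathscr{L}^{-1}(\Gamma(w+1)z^{-w-1}e^{\lambda z})(T) = (T-\lambda)^w_+/\Gamma(w+1)\cdot\Gamma(w+1)$ for each eigenvalue contribution $e^{-\lambda t}$, and the analogous computation for the Eisenstein (continuous-spectrum) terms, shifting the contour and picking up the relevant residues. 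One should remark that the interchange of the inversion integral with the spectral sum is legitimate because of the same decay that makes Remark \ref{assumptionLaplacetransform} hold.

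The main (and only real) obstacle is the verification of the integrability hypothesis, which as shown above forces the restriction $w > 3/2$; everything else is a direct translation of definitions and an invocation of Theorem \ref{convergencespectralmeasure}. I would therefore present the proof in three short steps: (i) identify $f_{w-1}$ and compute its Laplace transform; (ii) verify the hypothesis of Remark \ref{assumptionLaplacetransform} for $w > 3/2$; (iii) invoke Theorem \ref{convergencespectralmeasure} and unwind the notation to read off the stated limit.
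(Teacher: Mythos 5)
Your proposal is correct and follows exactly the route the paper intends: the paper states this theorem as ``a direct application of Theorem \ref{convergencespectralmeasure}'' with no further argument, and your three steps (computing $\mathscr{L}(f_{w-1})(z)=\Gamma(w+1)/z^{w}$, checking the integrability condition of Remark \ref{assumptionLaplacetransform}, which holds precisely for $w>3/2$, and unwinding the notation) supply the details the paper leaves implicit. The only addition beyond the paper is your closing remark about matching the contour integrals with the closed-form expressions \eqref{countingfunctioncompact} and \eqref{countingfunctionnoncompact}, which, as you correctly note, is not needed for the limit statement.
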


\hskip 0.2in The next result establishes the asymptotic behavior of
the function $G_{M_{q},w}(T)$ for fixed $T>1/4$ and weight $w
\ge 0$.
\begin{prop}\label{degeneratingellipticcountingprop}
Let $M_{q}$ denote an elliptically degenerating family of
compact or non-compact hyperbolic Riemann surfaces of finite volume
converging to the non-compact hyperbolic surface $M_{\infty}.$ For
any degenerating elliptic representative $\gamma \in
DE(\Gamma_{q})$ let $q_{\gamma}$ denote the order of the
corresponding finite cyclic subgroup.
\begin{enumerate}[(a)]
\item For any $w\ge 0$ and $T > 1/4$ we have
\begin{align*}
G_{M_{q},w}(T) = \sum_{\gamma \in DE(\Gamma_{q})}
\sum_{n=1}^{q_{\gamma}-1}
\frac{1}{2q_{\gamma}\sin(n\pi/q_{\gamma})}
\int\limits_{-\sqrt{T-1/4}}^{\sqrt{T-1/4}}
(T-1/4-r^2)^w\frac{e^{-2\pi n r/q_{\gamma}}}{1+e^{-2\pi r}}dr.
\end{align*}
\item For any $w\ge 0$ and $T\le 1/4$ we have $G_{M_{q},w}(T) = 0$, independently of $q$.
\item
For fixed $w\ge 0$
and $T>1/4$ we have
\begin{align*}
G_{M_{q},w}(T) = \frac{1}{\pi} \log\left(\prod_{\gamma \in DE(\Gamma)} q_{\gamma}\right)
\int\limits_{-\sqrt{T-1/4}}^{\sqrt{T-1/4}}
(T-1/4-r^2)^w\frac{1}{1+e^{-2\pi r}}dr + O(1)
\end{align*}
as the $q_\gamma$'s tend to infinity.
\end{enumerate}
\end{prop}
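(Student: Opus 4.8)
The plan is to prove parts (a) and (b) first by a direct contour-integral computation, and then to derive the asymptotic in (c) by isolating the dominant dependence of the summand on $q_\gamma$. For part (a), I would start from the definition of $G_{M_q,w}(T)$ and substitute the geometric expression \eqref{degeneratingtrace} for $\mathrm{Dtr}K_{M_q}(z)$, or rather its equivalent form coming from \eqref{Hejhalelliptictrace} restricted to $DE(\Gamma_q)$, namely
\begin{align*}
\mathrm{Dtr}K_{M_q}(z) = \sum_{\gamma\in DE(\Gamma_q)}\sum_{n=1}^{q_\gamma-1}\frac{e^{-z/4}}{2q_\gamma\sin(n\pi/q_\gamma)}\int_{-\infty}^\infty \frac{e^{-2\pi nr/q_\gamma - zr^2}}{1+e^{-2\pi r}}\,dr.
\end{align*}
Interchanging the (finite) sums with the contour integral, the task reduces to evaluating, for each fixed $r$, the inverse Laplace-type integral $\tfrac{1}{2\pi i}\int_{a-i\infty}^{a+i\infty}\Gamma(w+1)z^{-w-1}e^{-z/4}e^{-zr^2}e^{zT}\,dz$. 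Writing $e^{-z/4}e^{-zr^2}e^{zT}=e^{z(T-1/4-r^2)}$, this is exactly the standard formula $\tfrac{1}{2\pi i}\int_{a-i\infty}^{a+i\infty}\Gamma(w+1)z^{-w-1}e^{zu}\,dz = u^w$ for $u>0$ and $=0$ for $u\le 0$ (valid for $w\ge 0$, with the case $0\le w<1$ handled by analytic continuation or by an elementary residue/Hankel-contour argument). Thus only the range $|r|<\sqrt{T-1/4}$ survives, and one picks up the factor $(T-1/4-r^2)^w$, giving precisely the claimed formula. An interchange of the $r$-integral with the $z$-integral needs a dominated-convergence justification, which is routine since the $r$-integrand decays like a Gaussian uniformly on the contour.

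Part (b) is then immediate: if $T\le 1/4$ then $T-1/4-r^2\le 0$ for every real $r$, so the inner Laplace integral vanishes identically, and $G_{M_q,w}(T)=0$ regardless of $q$.

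For part (c), the starting point is the formula from (a). I would split each summand using $\tfrac{e^{-2\pi nr/q_\gamma}}{1+e^{-2\pi r}} = \tfrac{1}{1+e^{-2\pi r}} + \tfrac{e^{-2\pi nr/q_\gamma}-1}{1+e^{-2\pi r}}$, so that
\begin{align*}
G_{M_q,w}(T) = \left(\sum_{\gamma\in DE(\Gamma)}\frac{1}{2q_\gamma}\sum_{n=1}^{q_\gamma-1}\frac{1}{\sin(n\pi/q_\gamma)}\right)\int_{-\sqrt{T-1/4}}^{\sqrt{T-1/4}}\frac{(T-1/4-r^2)^w}{1+e^{-2\pi r}}\,dr + \mathcal{E}_q,
\end{align*}
and the main point becomes the elementary asymptotic $\tfrac{1}{q}\sum_{n=1}^{q-1}\tfrac{1}{\sin(n\pi/q)} = \tfrac{2}{\pi}\log q + O(1)$ as $q\to\infty$. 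This is the familiar fact that $\sum_{n=1}^{q-1}\csc(n\pi/q)$ behaves like $\tfrac{2q}{\pi}\log q$, which one proves by comparing $\csc(n\pi/q)$ with $\tfrac{q}{\pi n}$ near the endpoints (where $\sin(n\pi/q)\approx n\pi/q$ and $\approx (q-n)\pi/q$) and bounding the bulk $n\asymp q$ by a constant; the two endpoint contributions each give $\tfrac{1}{\pi}\log q + O(1)$ by comparison with the harmonic sum. Summing over the finitely many $\gamma\in DE(\Gamma)$ turns $\sum_\gamma \tfrac{1}{\pi}\log q_\gamma$ into $\tfrac{1}{\pi}\log\bigl(\prod_\gamma q_\gamma\bigr)$, which is the stated leading term. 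It remains to check that the error term $\mathcal{E}_q$ is $O(1)$: here one bounds $|e^{-2\pi nr/q_\gamma}-1|\le 2\pi|r|\,n/q_\gamma$ for the relevant range of $r$, and combines this with $\tfrac{1}{q_\gamma}\sum_{n=1}^{q_\gamma-1}\tfrac{n/q_\gamma}{\sin(n\pi/q_\gamma)}$, which is $O(1)$ since the extra factor $n/q_\gamma$ exactly cancels the logarithmic blow-up at the $n\approx q_\gamma$ endpoint while the $n\approx 0$ endpoint is already tame; the integral $\int (T-1/4-r^2)^w |r|\,(1+e^{-2\pi r})^{-1}\,dr$ is a finite constant depending only on $T$ and $w$.

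The main obstacle is the second step of part (c): controlling the error $\mathcal{E}_q$ and, more precisely, establishing the sharp form $\tfrac{1}{q}\sum_{n=1}^{q-1}\csc(n\pi/q) = \tfrac{2}{\pi}\log q + O(1)$ together with the bound $\tfrac{1}{q}\sum_{n=1}^{q-1}(n/q)\csc(n\pi/q) = O(1)$. Both are elementary but require a careful endpoint analysis splitting the sum into $1\le n\le q/2$ and $q/2 < n \le q-1$ and comparing with $\sum 1/n$; once these two estimates are in hand, the rest of (c) is bookkeeping. Part (a) carries no real difficulty beyond justifying the interchange of summation and integration, and part (b) is a one-line consequence.
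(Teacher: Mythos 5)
Your overall route is the same as the paper's: parts (a) and (b) follow from the Laplace inversion formula applied term by term to the Hejhal form of $\mathrm{Dtr}K_{M_{q}}$, and part (c) proceeds by separating a main term governed by $S(q_{\gamma})=\sum_{n=1}^{q_{\gamma}-1}\bigl(2q_{\gamma}\sin(n\pi/q_{\gamma})\bigr)^{-1}=\tfrac{1}{\pi}\log q_{\gamma}+O(1)$ from an error term. Parts (a) and (b), and the asymptotic for $S(q_{\gamma})$ itself, are fine and in fact more carefully justified than in the paper.

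There is, however, a genuine gap in your error estimate for (c). The claimed bound $\frac{1}{q}\sum_{n=1}^{q-1}\frac{n/q}{\sin(n\pi/q)}=O(1)$ is false, and the stated reason is backwards: the factor $n/q$ tends to $1$ at the endpoint $n\approx q$, so it does nothing to damp the singularity $\csc(n\pi/q)\approx q/((q-n)\pi)$ there; it only tames the endpoint $n\approx 0$. This sum is in fact $\tfrac{1}{\pi}\log q+O(1)$, so your $\mathcal{E}_{q}$ is only controlled as $O(\log q_{\gamma})$ by this route, which is of the same order as the main term you are trying to isolate. (The paper's own write-up elides the same difficulty by asserting $e^{-2\pi nr/q_{\gamma}}=1+O(r/q_{\gamma})$ uniformly in $n$, which likewise fails for $n$ near $q_{\gamma}$.) The statement is nevertheless correct, and the clean repair uses the exact symmetry $n\leftrightarrow q_{\gamma}-n$ of the summand in part (a). Setting $R=\sqrt{T-1/4}$ and
\begin{align*}
I_{n}=\int_{-R}^{R}(T-1/4-r^{2})^{w}\,\frac{e^{-2\pi nr/q_{\gamma}}}{1+e^{-2\pi r}}\,dr\,,
\end{align*}
the substitution $r\mapsto -r$ together with the identity $\frac{e^{-2\pi r}}{1+e^{-2\pi r}}=\frac{1}{1+e^{2\pi r}}$ gives $I_{q_{\gamma}-n}=I_{n}$, while $\sin((q_{\gamma}-n)\pi/q_{\gamma})=\sin(n\pi/q_{\gamma})$; hence the full sum is twice the sum over $1\le n\le q_{\gamma}/2$ (up to one bounded middle term). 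On that range $\sin(n\pi/q_{\gamma})\ge 2n/q_{\gamma}$ and $|I_{n}-I_{0}|\le Cn/q_{\gamma}$, so each error summand satisfies $\frac{|I_{n}-I_{0}|}{2q_{\gamma}\sin(n\pi/q_{\gamma})}\le \frac{C}{4q_{\gamma}}$, and summing over $n\le q_{\gamma}/2$ gives $O(1)$; the main term $I_{0}\,S(q_{\gamma})$ then yields the claimed leading behaviour. With this modification the rest of your argument for (c) goes through.
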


\begin{proof}
 We are studying the inverse Laplace transform of
\begin{align*}
\textrm{Dtr}K_{M_{q}}(t) = \sum_{\gamma \in DE(\Gamma_{q})}
\sum_{n=1}^{q_{\gamma}-1}
\frac{e^{-t/4}}{2q_{\gamma}\sin(n\pi/q_{\gamma})} \int_{-\infty}^{\infty}
\frac{e^{-2\pi n r/q_{\gamma}-tr^2}}{1+e^{-2\pi r}}dr.
\end{align*}
Using the definition of the degenerating elliptic spectral counting function together with the mechanism of the Laplace inversion formula allows us to write
\begin{align*}
G_{M_{q},w}(T) = \sum_{\gamma \in DE(\Gamma_{q})}
\sum_{n=1}^{q_{\gamma}-1}
\frac{1}{2q_{\gamma}\sin(n\pi/q_{\gamma})}
\int\limits_{-\sqrt{T-1/4}}^{\sqrt{T-1/4}}
(T-1/4-r^2)^w\frac{e^{-2\pi n r/q_{\gamma}}}{1+e^{-2\pi r}}dr
\end{align*}
provided that $T>1/4$.   In the case $T\le 1/4$, the properties of inverse Laplace transform imply that the integral over the vertical line equals zero, hence $G_{M_{q},w}(T) = 0$. Recall that, from the definition of the weighted spectral counting function, we know that such functions are only defined for weights $w > 3/2.$ However, the above formula is defined for any $w\ge 0$. This in turn, allows us to extend the definition of the degenerating (as well as elliptic) weighted spectral counting to any non-negative weights $w$.  This proves parts (a) and (b) of the theorem.

\hskip 0.2in To prove part (c), we start by fixing $T > 1/4$. We note that
\begin{align*}
e^{-2\pi n r/q_{\gamma}} = 1 + O(r/q_{\gamma}) \textrm{ for } r^2\le
T-1/4,
\end{align*}
so then
\begin{align}\label{degeneratingcountingfunction}
\notag G_{M_{q},w}(T) =& \sum_{\gamma \in DE(\Gamma_{q})}
\sum_{n=1}^{q_{\gamma}-1} \frac{1}{2q_{\gamma}\sin(n\pi/q_{\gamma})}
\int\limits_{-\sqrt{T-1/4}}^{\sqrt{T-1/4}}
(T-1/4-r^2)^w\frac{1}{1+e^{-2\pi r}}dr\\
&+\sum_{\gamma \in DE(\Gamma_{q})} \sum_{n=1}^{q_{\gamma}-1}
\frac{1}{2q_{\gamma}^2\sin(n\pi/q_{\gamma})}
\int\limits_{-\sqrt{T-1/4}}^{\sqrt{T-1/4}}
(T-1/4-r^2)^w\frac{O(r)}{1+e^{-2\pi r}}dr.
\end{align}
To continue, we focus on estimating the sum
\begin{align*}
S(q_{\gamma})=\sum_{n=1}^{q_{\gamma}-1}
\frac{1}{2q_{\gamma}\sin(n\pi/q_{\gamma})}
\end{align*}
as $q_{\gamma} \to \infty$, since such an estimate would apply to
estimate the function $G_{M_{q},w}(T)$.

Let us write
\begin{align*}
S(q_{\gamma})=\sum_{n=1}^{[q_{\gamma}/4]}
\frac{1}{2q_{\gamma}\sin(n\pi/q_{\gamma})} +
\sum_{n=[q_{\gamma}/4]+1}^{[3q_{\gamma}/4]}
\frac{1}{2q_{\gamma}\sin(n\pi/q_{\gamma})}+
\sum_{n=[3q_{\gamma}/4]+1}^{q_{\gamma}-1}
\frac{1}{2q_{\gamma}\sin(n\pi/q_{\gamma})}.
\end{align*}
We recognize the middle sum as a Riemann sum. As such we can write
its limiting value as
\begin{align*}
\sum_{n=[q_{\gamma}/4]+1}^{[3q_{\gamma}/4]}
\frac{1}{2q_{\gamma}\sin(n\pi/q_{\gamma})} \to
\frac{1}{2\pi}\int\limits_{\pi/4}^{3\pi/4} \frac{dx}{\sin x}  = O(1) \quad
\textrm{ as } q_{\gamma}\to \infty.
\end{align*}
Using the identity $\sin(x)=\sin(\pi - x)$, we then have that
\begin{align*}
S(q_{\gamma})=\sum_{n=1}^{[q_{\gamma}/4]}
\frac{1}{q_{\gamma}\sin(n\pi/q_{\gamma})} + O(1) \quad \textrm{ as
} q_{\gamma} \to \infty.
\end{align*}
For $x\in [0,\pi/4]$, we have that $x-x^3/6\le \sin x \le x$, so
then
\begin{align*}
\frac{1}{x} \le \frac{1}{\sin x} \le \frac{1}{x-x^3/6} \quad
\textrm{ for } x\in[0,\pi/4]
\end{align*}
which further implies
\begin{align*}
0 \le \frac{1}{\sin x} - \frac{1}{x} \le \frac{1}{x-x^3/6}
-\frac{1}{x} = \frac{x}{6(1-x^2/6)} \quad \textrm{ for }
x\in[0,\pi/4].
\end{align*}
With all this, we take $x=n\pi/q_{\gamma}$ with $1\le n \le
[q_{\gamma}/4]$ an arrive at the bounds
\begin{align*}
0 \le \frac{1}{q_{\gamma}}
\sum_{n=1}^{[q_{\gamma}/4]}\Bigg(\frac{1}{\sin(n\pi/q_{\gamma})}-
\frac{1}{n\pi/q_{\gamma}}\Bigg) \le \frac{1}{q_{\gamma}}
\sum_{n=1}^{[q_{\gamma}/4]}
\frac{n\pi/q_{\gamma}}{6(1-(n\pi/q_{\gamma})^2/6)}.
\end{align*}
This upper sum is also recognizable as a Riemann sum, so then we can
write
\begin{align*}
\frac{1}{q_{\gamma}} \sum_{n=1}^{[q_{\gamma}/4]}
\frac{n\pi/q_{\gamma}}{6(1-(n\pi/q_{\gamma})^2/6)} \to \frac{1}{\pi}
\int\limits_{0}^{\pi/4}\frac{x}{6(1-x^2/6)}dx \quad \textrm{ as }
q_{\gamma} \to \infty.
\end{align*}
The above integral is clearly finite. Therefore, we have shown that
\begin{align*}
S(q_{\gamma}) - \frac{1}{\pi}\sum_{n=1}^{[q_{\gamma}/4]} \frac{1}{n}
= O(1) \quad \textrm{ as } q_{\gamma} \to \infty.
\end{align*}
It is elementary to show that
\begin{align*}
\sum_{n=1}^{[q_{\gamma}/4]} \frac{1}{n} = \log q_{\gamma}+O(1) \quad
\textrm{ as } q_{\gamma} \to \infty.
\end{align*}

Thus the first inner sum in the right-hand side of equation (\ref{degeneratingcountingfunction}) has the asymptotic
\begin{align*}
S(q_{\gamma}) = \frac{1}{\pi} \log(q_{\gamma}) + O(1) \quad \textrm{ as } q_{\gamma} \to \infty\,\,.
\end{align*}
Consequently, the second inner sum in the right-hand side of (\ref{degeneratingcountingfunction}), namely $q_{\gamma}^{-1}S(q_{\gamma})$ approaches zero as $q_{\gamma}$ runs off to infinity.
Applying these estimates to equation (\ref{degeneratingcountingfunction}) completes the proof.
\end{proof}

\hskip 0.2in Our next task is to study the behavior of the weighted spectral counting functions $M_{M_{q},w}(T)$ for weights $0\le w\le 3/2$ in both compact and non-compact cases. We start by making the following observations coming from Proposition \ref{degeneratingellipticcountingprop}. Consider the integral in the formula for $G_{M_{q},w}(T)$
\begin{align*}
c_w(T)= \frac{1}{\pi}\int\limits_{-\sqrt{T-1/4}}^{\sqrt{T-1/4}}
(T-1/4-r^2)^w\frac{e^{-2\pi n r/q_{\gamma}}}{1+e^{-2\pi r}}dr\,\,.
\end{align*}
Let $f(T,r)$ denote the integrand above. Since $f(T,r)$ as well as the limits of integration are $C^1$ in both variables we have that
\begin{align}\label{cwTrelation}
\notag \frac{d}{dT} c_{w+1}(T) =&  f\left(T, \sqrt{T-1/4}\right) \frac{d}{dT} \left(\sqrt{T-1/4}\right) -  f\left(T, -\sqrt{T-1/4}\right) \frac{d}{dT} \left(-\sqrt{T-1/4}\right)\\
\notag &+ \frac{1}{\pi}\int\limits_{-\sqrt{T-1/4}}^{\sqrt{T-1/4}}
\frac{d}{dT} \left[(T-1/4-r^2)^{(w+1)}\frac{e^{-2\pi n r/q_{\gamma}}}{1+e^{-2\pi r}}\right]dr\\
=& (w+1) c_w(T)\,\,,
\end{align}
for any $w\ge 0$. Setting $Q = \log(\prod q_{\gamma})$, where the product runs over all the degenerating elliptic elements of $\Gamma_{q}$, we can write
\begin{align}\label{asymptoticelldegencountingfunction}
G_{M_{q},w}(T) = c_w(T) \log Q
+ O(1)
\end{align}
as the $q$ tends to infinity. Furthermore, in the special case $w=0$, we can apply the mean value theorem to estimate the integral the defines $c_0(T)$. Namely, for some value $c$ in the domain of integration, we get
\begin{align*}
c_0(T)=\frac{e^{-2\pi n c/q_{\gamma}}}{1+e^{-2\pi c}} \cdot \frac{2 \sqrt{T-1/4}}{\pi}\,\,.
\end{align*}
This allows to rewrite the behavior of the weight 0 degenerating elliptic counting function as
\begin{align*}
G_{M_{q},0}(T) = \frac{2  C \sqrt{T-1/4}}{\pi} \log Q
+ O(1) \,\,,
\end{align*}
as $q$ tends to infinity and for some $0<C<1$.

\hskip 0.2in We continue by making the following observation. For $w>1/2$, the expression for the weighted counting function associated to the compact family $M_w$ as given by (\ref{countingfunctioncompact}) implies
\begin{align*}
\frac{1}{w+1} \cdot \frac{d}{dT} N_{M_{q},w+1}(T) = \sum_{\lambda_{n,q} < T} (T-\lambda_{n,q})^w\,\,.
\end{align*}
The left-hand side above is defined since, $w+1>3/2$. It is also clear that the right-hand side above is a well defined function.
This allows us to define $N_{M_{q},w}(T)$ for values of the weight above $1/2$, namely,
\begin{align}\label{countingfunctiondiffeqn}
N_{M_{q},w}(T) = \frac{1}{w+1} \cdot \frac{d}{dT} N_{M_{q},w+1}(T) \,\,.
\end{align}
By repeating the above argument, we can extend $N_{M_{q},w}(T)$ to any $w\ge 0$. In particular, $N_{M_{q},0}(T) $ counts with multiplicity the eigenvalues of the Laplace operator on $M_{q}$ which are less than $T$. With the above remarks in mind, we are now ready to give the behavior of the counting function $N_{M_{q},w}(T)$ for any weight $0\le w\le 3/2$ in the compact case.

\begin{thm}\label{behaviorcountingfunctionscompact}
Let $M_{q}$ denote an elliptically degenerating family of
compact hyperbolic Riemann surfaces of finite volume.
Then for $T > 1/4$ and $0\le w \le 3/2$ we have that
\begin{align*}
N_{M_{q},w}(T) \sim  c_w(T) \log Q
\end{align*}
as $q$ tends to infinity.
\end{thm}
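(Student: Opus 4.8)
The plan is to bootstrap downward from the range $w>3/2$, where the statement is already essentially established, using the first‑order (in the weight) relation between the counting functions together with a monotonicity, i.e.\ Tauberian, argument. The one genuine obstacle is that the naive route — differentiating $N_{M_{q},w}(T)-G_{M_{q},w}(T)\to N_{M_{\infty},w}(T)$ in $T$ in order to lower the weight — is not legitimate, since the contour integrals underlying these counting functions converge absolutely only for weight $>3/2$, so pointwise convergence of the smoothed counting functions does not transfer to their $T$‑derivatives. The monotone‑integral argument below is designed precisely to circumvent this.

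First, for $w>3/2$ and $T>1/4$: Theorem \ref{convergencecountingfunctions} gives $N_{M_{q},w}(T)-G_{M_{q},w}(T)\to N_{M_{\infty},w}(T)$, a finite limit, and (\ref{asymptoticelldegencountingfunction}) gives $G_{M_{q},w}(T)=c_w(T)\log Q+O(1)$; together these yield $N_{M_{q},w}(T)=c_w(T)\log Q+O(1)$, hence $N_{M_{q},w}(T)/\log Q\to c_w(T)$. Here $c_w(T)>0$ for $T>1/4$ (it is the integral of a positive function over a nonempty interval) and $\log Q\to\infty$ since each $q_\gamma\to\infty$.

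Next, two elementary identities. In the compact case (\ref{countingfunctioncompact}) gives $N_{M_{q},w}(T)=\sum_{\lambda_{n,q}\le T}(T-\lambda_{n,q})^{w}$, which is non‑decreasing in $T$ for every $w\ge 0$ and vanishes at $T=0$ when $w>0$; integrating (\ref{countingfunctiondiffeqn}) therefore gives
\begin{equation*}
N_{M_{q},w+1}(T)=(w+1)\int_{0}^{T}N_{M_{q},w}(u)\,du .
\end{equation*}
Likewise, integrating (\ref{cwTrelation}) and using $c_{w+1}(1/4)=0$ together with $c_{w}\equiv 0$ on $[0,1/4]$ gives $c_{w+1}(T)=(w+1)\int_{0}^{T}c_w(u)\,du$.

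Finally, the Tauberian step, which we run twice. Fix $w\in[0,3/2]$ and $T>1/4$, set $\nu_{q}(u)=N_{M_{q},w}(u)/\log Q$, and assume that the weight $v=w+1$ is already handled, i.e.\ that $N_{M_{q},v}(S)/\log Q\to c_v(S)$ for all $S>1/4$; by the first step this holds once $w+1>3/2$, i.e.\ for $w\in(1/2,3/2]$, and a second application of the present step then covers $w\in[0,1/2]$, so all of $[0,3/2]$ is reached. By the identities above, $\int_{0}^{S}\nu_{q}(u)\,du=N_{M_{q},w+1}(S)/\big((w+1)\log Q\big)\to c_{w+1}(S)/(w+1)=\int_{0}^{S}c_w(u)\,du$ for every $S>1/4$. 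Since each $\nu_{q}$ is non‑decreasing, for small $h>0$ (so that $T-h>1/4$) we have $\frac{1}{h}\int_{T-h}^{T}\nu_{q}\le\nu_{q}(T)\le\frac{1}{h}\int_{T}^{T+h}\nu_{q}$; letting $q\to\infty$ and then $h\to0^{+}$, and using the continuity of $c_w$ on $(1/4,\infty)$, both outer quantities converge to $c_w(T)$. Hence $\nu_{q}(T)\to c_w(T)$, and since $c_w(T)>0$ this is exactly $N_{M_{q},w}(T)\sim c_w(T)\log Q$.
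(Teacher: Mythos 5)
Your proof is correct and follows essentially the same route as the paper: a downward bootstrap in the weight from the range $w>3/2$ (where Theorem \ref{convergencecountingfunctions} and (\ref{asymptoticelldegencountingfunction}) give the asymptotic directly), using monotonicity of $N_{M_{q},w}(T)$ in $T$ together with the relation (\ref{countingfunctiondiffeqn}) between consecutive weights to sandwich $N_{M_{q},w}(T)/\log Q$ between averages that converge to $c_w(T)$. The only cosmetic difference is that you integrate (\ref{countingfunctiondiffeqn}) and average the monotone function over $[T-h,T]$ and $[T,T+h]$, whereas the paper applies the mean value theorem to the difference quotient of $N_{M_{q},w+1}$ --- these are the same Tauberian sandwich, and your limit interchange and two-step coverage of $[0,3/2]$ match the paper's.
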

\begin{proof}
Given any $w\ge 0$,  the counting function $N_{M_{q},w}(T) $ is increasing for $T>0$. Choose any $\varepsilon > 0$. The mean value theorem applied to $N_{M_{q},w}(T)$ on the interval $[T,T+\varepsilon]$ together with the differential equation satisfied by the counting functions (see formula (\ref{countingfunctiondiffeqn})) as well as the monotonicity imply
\begin{align}\label{countingfunctionMVT}
N_{M_{q},w}(T) \le \frac{1}{w+1} \frac{N_{M_{q},w+1}(T+\varepsilon) - N_{M_{q},w+1}(T)}{\varepsilon} \le N_{M_{q},w}(T+\varepsilon)\,\,.
\end{align}
Now fix a weight $w>1/2$. Then we can write the inequalities in (\ref{countingfunctionMVT}) above as
\begin{align}\label{countingfunctionMVT2}
\frac{N_{M_{q},w}(T)}{\log Q} \le \frac{1}{w+1} \frac{N_{M_{q},w+1}(T+\varepsilon)/\log Q - N_{M_{q},w+1}(T)/\log Q}{\varepsilon} \le \frac{N_{M_{q},w}(T+\varepsilon)}{ \log Q}\,\,.
\end{align}
Taking the limit as $q$ goes to infinity in (\ref{countingfunctionMVT2} ), together with the convergence of counting functions of weight $w>3/2$ (see Theorem \ref{convergencecountingfunctions}) and the asymptotic coming from (\ref{asymptoticelldegencountingfunction}) applied to the middle term imply that
\begin{align}\label{countingfunctionMVT3}
\limsup_{q \to \infty }\frac{N_{M_{q},w}(T)}{ \log Q} \le \frac{1}{w+1} \frac{c_{w+1}(T+\varepsilon) - c_{w+1}(T)}{\varepsilon} \le \liminf_{q \to \infty }\frac{N_{M_{q},w}(T+\varepsilon)}{\log Q}\,\,.
\end{align}
Letting $\varepsilon$ go to zero and using the differential equation satisfied by $c_{w+1}(T)$ (coming from (\ref{cwTrelation})), to obtain
\begin{align}\label{countingfunctionMVT4}
\limsup_{q \to \infty }\frac{N_{M_{q},w}(T)}{ \log Q} \le c_{w}(T) \le \liminf_{q \to \infty }\frac{N_{M_{q},w}(T)}{ \log Q}\,\,.
\end{align}
This proves that for weights $w>1/2$ we have
\begin{align*}
\lim_{q \to \infty }\frac{N_{M_{q},w}(T)}{ \log Q} = c_{w}(T)\,\,.
\end{align*}
Fix $w\ge 0$ and repeat the above argument to extend the result to any non-negative weight $w.$
\end{proof}

\hskip 0.2in Let us continue by investigating the behavior of the counting functions for weights $0\le w\le 3/2$ associated to the non-compact family $M_{q}$.
\begin{thm}\label{behaviorcountingfunctionsnoncompact}
Let $M_{q}$ denote an elliptically degenerating family of
non-compact hyperbolic Riemann surfaces of finite volume.
Then for $T > 1/4$ and $0\le w \le 3/2$ we have that
\begin{align*}
N_{M_{q},w}(T) \sim  c_w(T) \log Q
\end{align*}
as $q$ tends to infinity.
\end{thm}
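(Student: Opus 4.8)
The plan is to run the proof of Theorem~\ref{behaviorcountingfunctionscompact} again; the only genuinely new feature of the non-compact case is that the regularized weighted counting function $N_{M_{q},w}(T)$ given by (\ref{countingfunctionnoncompact}) is \emph{not} monotone in $T$, because the scattering term and the $\Gamma$-factor terms spoil monotonicity. First I would peel off the monotone part: write
\begin{align*}
N_{M_{q},w}(T) &= \widetilde{N}_{M_{q},w}(T) + R_{M_{q},w}(T), \\
\widetilde{N}_{M_{q},w}(T) &= \sum_{\lambda_{n,q}\le T}(T-\lambda_{n,q})^{w},
\end{align*}
where $\widetilde{N}_{M_{q},w}(T)$ is the weighted sum over the discrete eigenvalues of $M_{q}$ below $T$ (a finite sum, by Weyl's law, which is nondecreasing in $T$) and $R_{M_{q},w}(T)$ collects the four remaining terms of (\ref{countingfunctionnoncompact}) coming from the cusps of $M_{q}$. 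I would also verify, by differentiating (\ref{countingfunctionnoncompact}) term by term --- all boundary contributions vanish since $(T-1/4-r^{2})^{w+1}$ is zero at $r=\pm\sqrt{T-1/4}$ --- that the identity (\ref{countingfunctiondiffeqn}) continues to hold for $N_{M_{q},w}(T)$ in the non-compact case, hence also separately for $\widetilde{N}_{M_{q},w}(T)$ and for $R_{M_{q},w}(T)$.

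The key step is to show that $R_{M_{q},w}(T) = O(1)$, uniformly in $q$, for each fixed $w\ge 0$ and $T>1/4$ (in particular $R_{M_{q},w}(T) = o(\log Q)$). I would treat the four terms separately. The digamma integral $\frac{p}{2\pi}\int_{-\sqrt{T-1/4}}^{\sqrt{T-1/4}}(T-1/4-r^{2})^{w}\frac{\Gamma'}{\Gamma}(1+ir)\,dr$ and the term $\frac{p\log 2\,\Gamma(w+1)}{\sqrt{4\pi}\,\Gamma(w+3/2)}(T-1/4)^{w+1/2}$ do not depend on $q$ at all, since the number of cusps $p$ of $M_{q}$ is fixed along the family (the degenerating cones become the \emph{new} cusps of $M_{\infty}$), so both are trivially $O(1)$. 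For the term $-\frac14(p-\mathrm{Tr}\,\Phi_{q}(1/2))(T-1/4)^{w}$ I would use the functional equation $\Phi_{q}(s)\Phi_{q}(1-s)=I$, which at $s=1/2$ gives $\Phi_{q}(1/2)^{2}=I$; hence $\Phi_{q}(1/2)$ is diagonalizable with eigenvalues $\pm1$, so $|\mathrm{Tr}\,\Phi_{q}(1/2)|\le p$ and this term is $O(1)$ uniformly in $q$. The remaining term, $-\frac{1}{4\pi}\int_{-\sqrt{T-1/4}}^{\sqrt{T-1/4}}(T-1/4-r^{2})^{w}\frac{\phi_{q}'}{\phi_{q}}(1/2+ir)\,dr$ (with $\phi_{q}$ the scattering determinant of $M_{q}$), is the one that needs real input: I would bound it using the convergence, as $q\to\infty$, of $\phi_{q}$ together with its logarithmic derivative, uniformly on compact subsets of the critical line. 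For the Hecke triangle groups this is precisely Hejhal's theorem, cf.\ \cite{He 83}; in the general elliptically degenerating setting I would derive it from the convergence of the corresponding truncated Eisenstein series on $M_{q}\backslash C_{q,\varepsilon}$ via the Maass--Selberg relations (compare the convergence statements of \cite{GJ 16}). Granting this, the integral converges and is in particular $O(1)$ uniformly in $q$.

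With $R_{M_{q},w}(T)$ under control, the remainder is a transcription of the compact argument. For $w>3/2$, combining Theorem~\ref{convergencecountingfunctions} with the asymptotic (\ref{asymptoticelldegencountingfunction}) for $G_{M_{q},w}(T)$ gives $N_{M_{q},w}(T)/\log Q \to c_{w}(T)$, exactly as in the proof of Theorem~\ref{behaviorcountingfunctionscompact}, and therefore $\widetilde{N}_{M_{q},w}(T)/\log Q \to c_{w}(T)$ as well, since $R_{M_{q},w}(T)/\log Q\to 0$ (note $\log Q\to\infty$ along the family). I would then apply the mean value theorem together with the monotonicity of $\widetilde{N}_{M_{q},w}(T)$, exactly as in the proof of Theorem~\ref{behaviorcountingfunctionscompact}: using $\widetilde{N}_{M_{q},w}(T)=\frac{1}{w+1}\frac{d}{dT}\widetilde{N}_{M_{q},w+1}(T)$, the relation (\ref{cwTrelation}) for $c_{w+1}(T)$, and letting $\varepsilon\to0$, one descends one unit of weight at a time from weight $>3/2$ to conclude $\widetilde{N}_{M_{q},w}(T)/\log Q\to c_{w}(T)$ for every $0\le w\le 3/2$. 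Adding back $R_{M_{q},w}(T)/\log Q\to 0$ gives $N_{M_{q},w}(T)\sim c_{w}(T)\log Q$, as claimed. The main obstacle in this scheme is the uniform bound on the scattering integral --- equivalently, the convergence of the scattering determinants $\phi_{q}$ through elliptic degeneration; with that in hand the rest is routine.
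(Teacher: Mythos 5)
Your treatment of three of the five terms in (\ref{countingfunctionnoncompact}) (the digamma integral, the $\mathrm{Tr}\,\Phi(1/2)$ term, and the final $(T-1/4)^{w+1/2}$ term) matches the paper's, and your overall plan of reducing to the monotone sandwich argument of Theorem \ref{behaviorcountingfunctionscompact} is also the paper's. But the key step of your scheme --- that the scattering integral
\begin{align*}
-\frac{1}{4\pi}\int_{-\sqrt{T-1/4}}^{\sqrt{T-1/4}}(T-1/4-r^{2})^{w}\,\frac{\phi_{q}'}{\phi_{q}}(1/2+ir)\,dr
\end{align*}
is $O(1)$ uniformly in $q$ --- is a genuine gap, and indeed should be expected to be \emph{false}. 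The very result motivating the paper (Hejhal's Theorem 7.11 and Corollary 7.12 in \cite{He 83}, quoted in the introduction) says that for the Hecke triangle groups the zeros and poles of $\phi_{N}$ accumulate at every point of the critical line as $N\to\infty$; consequently $\phi_{q}'/\phi_{q}$ does not converge uniformly on compact subsets of the critical line, and the integral above grows without bound --- it is precisely this term that carries (much of) the $c_{w}(T)\log Q$ growth in the non-compact case. Were your claim true, the discrete eigenvalue sum $\widetilde{N}_{M_{q},w}(T)$ alone would have to grow like $c_{w}(T)\log Q$, i.e.\ the discrete spectrum below $T$ would accumulate; that is not what the theorem asserts, and it is contrary to the Phillips--Sarnak expectation for the non-arithmetic Hecke groups.

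The paper's proof avoids isolating the scattering term. It keeps the eigenvalue sum and the scattering integral together in a single quantity $\widehat{N_{M,w}(T)}$, defined in (\ref{countingfunctionnoncompact2}), and establishes the one property actually needed for the sandwich argument, namely monotonicity in $T$. This is done by invoking Lemma 5.3 of \cite{HJL 97} (which goes back to \cite{He 83}):
\begin{align*}
-\frac{\phi'}{\phi}(1/2+ir)-\sum_{k=1}^{N}\frac{1-s_{k,q}}{(s_{k,q}-1/2)^{2}+r^{2}}\;\ge\;2\log q_{M_{q}}\;>\;0\,,
\end{align*}
so that, after adding and subtracting the finite sum over the $s_{k,q}$ as in (\ref{countingfunctionnoncompact3}), $\widehat{N_{M,w}(T)}$ is exhibited as a sum of nonnegative, increasing contributions. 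With monotonicity and the differential relation (\ref{countingfunctiondiffeqn}) in hand, the descent in weight from $w>3/2$ proceeds exactly as in the compact case. If you replace your $O(1)$ claim for the scattering term by this positivity lemma and run the sandwich argument on $\widehat{N}$ rather than on $\widetilde{N}$, the rest of your argument goes through.
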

\begin{proof}
We need to show that for fixed $T>1/4$ and $0\le w \le 3/2$ the following limit holds
\begin{align}\label{behaviorcountingfunctionsnoncompactlimit}
\lim_{q \to \infty }\frac{N_{M_{q},w}(T)}{\log Q}  = c_w(T) \,\,.
\end{align}
\hskip 0.2in Recall that for $T>1/4$ and $w>3/2$, the spectral counting function $N_{M_{q},w}(T)$ is given by formula (\ref{countingfunctionnoncompact}). Let us look at the 5 terms that amount the counting function.  For the third term we have that
\begin{align*}
\left|\frac{\Gamma'}{\Gamma}(1+ir)\right| \le \left|\frac{\Gamma'}{\Gamma}(1)\right| = \Gamma'(1) = \gamma
\end{align*}
where $\gamma$ denotes the Euler-Mascheroni constant (coming from p. 114 \cite{JoLa 93b} ). This shows that this term in the expression of the spectral counting function is finite and independent of $q$.  Consequently, the contribution of this term to the limit (\ref{behaviorcountingfunctionsnoncompactlimit}) is zero.
The fourth term in (\ref{countingfunctionnoncompact}) involves the trace of the scattering matrix at $s=1/2$. From \cite{Ku 73}, we have that the $p\times p$ matrix $A=\Phi(1/2)$ is  orthogonal and symmetric. Then $A^2 = \text{Id}$ which implies that the only eigenvalues of the matrix $A$ are $\pm 1$. Since the trace of the matrix equals the sum of its eigenvalues, it follows that $|\text{Tr }\Phi(1/2)| \le p$. Consequently, the fourth term in (\ref{countingfunctionnoncompact}) is bounded independently of $q$, so that its contribution to the limit (\ref{behaviorcountingfunctionsnoncompactlimit}) is zero. The contribution of the fifth term of the spectral counting function to the above limit is clearly zero.

\hskip 0.2in So far we have shown that only the first two terms in the right-hand side of equation (\ref{countingfunctionnoncompact}) have a significant contribution to the spectral counting function. To this end, let us define
\begin{align}\label{countingfunctionnoncompact2}
\widehat{N_{M,w}(T)} = \sum_{\lambda_n\le T} (T-\lambda_n)^{w} -
\frac{1}{4\pi}\int_{-\sqrt{T-1/4}}^{\sqrt{T-1/4}}(T-1/4-r^2)^w
\frac{\phi'}{\phi}(1/2+ir)dr\,\,.
\end{align}
By the previous remarks, it remains to show that
\begin{align}\label{behaviorcountingfunctionsnoncompactlimit2}
\lim_{q \to \infty }\frac{\widehat{N_{M_{q},w}(T)}}{\log Q}  = c_w(T) \,\,.
\end{align}

Quoting Lemma 5.3 of \cite{HJL 97} (which comes from pp. 160 of \cite{He 83}), we have the following result
\begin{align*}
-\frac{\phi'}{\phi}(1/2 +ir) - \sum_{k=1}^{N} \frac{1-s_{k,q}}{(s_{k,q}-1/2)^2 + r^2} \ge 2 \log q_{M_{q}} > 0\,\,,
\end{align*}
where $1/2 < s_{k,q}\le 1$ and $q_{M_{q}}>1.$ This allows to write
\begin{align}\label{countingfunctionnoncompact3}
\notag \widehat{N_{M,w}(T)} = \sum_{\lambda_n\le T} (T-& \lambda_n)^{w} \\
\notag -\frac{1}{4\pi}\int_{-\sqrt{T-1/4}}^{\sqrt{T-1/4}}(T-1/4-& r^2)^w
\left(\frac{\phi'}{\phi}(1/2+ir) +  \sum_{k=1}^{N} \frac{1-s_{k,q}}{(s_{k,q}-1/2)^2 + r^2} \right)dr\\
+ \frac{1}{4\pi}\int_{-\sqrt{T-1/4}}^{\sqrt{T-1/4}}(T-1/4-& r^2)^w
\sum_{k=1}^{N} \frac{1-s_{k,q}}{(s_{k,q}-1/2)^2 + r^2} dr\,\,.
\end{align}
Consequently, the hat spectral counting function, as given by (\ref{countingfunctionnoncompact3}), is increasing whenever $w\ge 0$ and $T>0$. Furthermore, the hat function satisfies the differential equation as in (\ref{countingfunctiondiffeqn}). For $w>3/2$, the result of the Theorem \ref{convergencecountingfunctions} applies. Fix a weight $w>1/2$ and proceed as in (\ref{countingfunctionMVT}) through (\ref{countingfunctionMVT4}) to show that
\begin{align*}
\lim_{q \to \infty }\frac{\widehat{N_{M_{q},w}(T)}}{\log Q}  = c_w(T) \,\,.
\end{align*}
Repeating the argument, but now with $w\ge 0$ fixed, completes the proof.
\end{proof}

\hskip 0.2in As an immediate consequence of Theorem \ref{convergencecountingfunctions} and Proposition \ref{degeneratingellipticcountingprop} together with the fact that these counting functions extend to any non-negative weight, we obtain the following corollary.

\begin{cor} \label{convergencecountingfunctionssmallT}
Let $M_{q}$ denote an elliptically degenerating family of
compact or non-compact hyperbolic Riemann surfaces of finite volume
converging to the non-compact hyperbolic surface $M_{\infty}.$
Then for $T\le 1/4$ and $w>0$ we have that
\begin{align*}
\lim_{q \to \infty}
N_{M_{q},w}(T)= N_{M_{\infty},w}(T).
\end{align*}
In addition to this, if $T$ is not an eigenvalue of $M_{\infty}$, we get that
\begin{align*}
\lim_{q \to \infty}
N_{M_{q},0}(T)= N_{M_{\infty},0}(T).
\end{align*}
\end{cor}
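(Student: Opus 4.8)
The plan is to handle the statement weight by weight, starting from the range where the two cited results apply directly and then descending.

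\emph{The range $w>3/2$.} Here everything is already in place. By part~(b) of Proposition~\ref{degeneratingellipticcountingprop}, $G_{M_q,w}(T)=0$ for every $q$ whenever $T\le 1/4$, so Theorem~\ref{convergencecountingfunctions} collapses to
\[
\lim_{q\to\infty} N_{M_q,w}(T)=\lim_{q\to\infty}\bigl[N_{M_q,w}(T)-G_{M_q,w}(T)\bigr]=N_{M_\infty,w}(T),\qquad w>3/2,\ T\le 1/4 .
\]

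\emph{The range $0<w\le 3/2$.} I would push the weight downward by the very same monotonicity-plus-mean-value argument used in the proofs of Theorems~\ref{behaviorcountingfunctionscompact} and~\ref{behaviorcountingfunctionsnoncompact}, the only differences being that for $T\le 1/4$ the counting functions stay bounded (so the normalizing factor $\log Q$ of those proofs disappears) and that the target function is $N_{M_\infty,w}(T)$ rather than $c_w(T)\log Q$. Fix $T<1/4$ and a weight $w$ with $w+1$ in the already-treated range, pick $\varepsilon>0$ with $T+\varepsilon<1/4$, and use that each $N_{M_q,w}(\cdot)$ is monotone nondecreasing on $(-\infty,1/4]$ together with the relation $N_{M_q,w}=\tfrac{1}{w+1}\tfrac{d}{dT}N_{M_q,w+1}$ (equation~(\ref{countingfunctiondiffeqn})) to get, by the mean value theorem,
\[
N_{M_q,w}(T)\ \le\ \frac{N_{M_q,w+1}(T+\varepsilon)-N_{M_q,w+1}(T)}{(w+1)\varepsilon}\ \le\ N_{M_q,w}(T+\varepsilon),
\]
and symmetrically with a left increment on $[T-\varepsilon,T]$. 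Letting $q\to\infty$ (both $T$ and $T\pm\varepsilon$ lie in $(0,1/4)$, so the already-proved convergence of $N_{M_q,w+1}$ applies at those points) and then $\varepsilon\to 0$ squeezes $\lim_q N_{M_q,w}(T)$ to $\tfrac{1}{w+1}\tfrac{d}{dT}N_{M_\infty,w+1}(T)=N_{M_\infty,w}(T)$. A first pass gives all $w>1/2$, a second pass all $w>0$. The endpoint $T=1/4$ sits at the edge of this scheme (only left increments are available, and $T+\varepsilon>1/4$ reintroduces a nontrivial $G$-term); there I would instead note that for $T\le 1/4$ one simply has $N_{M_q,w}(T)=\sum_{\lambda_{n,q}\le T}(T-\lambda_{n,q})^w$, a finite sum whose terms converge by the convergence of the small eigenvalues established in~\cite{GJ 16}, those attached to eigenvalues approaching $1/4$ contributing $o(1)$ because $w>0$.

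\emph{The case $w=0$ with $T$ not an eigenvalue of $M_\infty$.} Choose $\delta>0$ with $M_\infty$ having no eigenvalue in $[T-\delta,T+\delta]$ and $T+\delta<1/4$. For any auxiliary weight $w'>0$ and every eigenvalue $\lambda_{n,q}\le T$ one has $(T+\delta-\lambda_{n,q})^{w'}\ge\delta^{w'}$ and $(T-\lambda_{n,q})^{w'}\le T^{w'}$, which yield the sandwich
\[
T^{-w'}\,N_{M_q,w'}(T)\ \le\ N_{M_q,0}(T)\ \le\ \delta^{-w'}\,N_{M_q,w'}(T+\delta).
\]
Passing $q\to\infty$ and invoking the convergence at the positive weight $w'$ (at the points $T,\,T+\delta<1/4$, now available from the previous two steps) gives
\[
T^{-w'}N_{M_\infty,w'}(T)\ \le\ \liminf_{q\to\infty}N_{M_q,0}(T)\ \le\ \limsup_{q\to\infty}N_{M_q,0}(T)\ \le\ \delta^{-w'}N_{M_\infty,w'}(T+\delta).
\]
Since $M_\infty$ has no eigenvalue in $(T-\delta,T+\delta]$, both $N_{M_\infty,w'}(T)$ and $N_{M_\infty,w'}(T+\delta)$ are sums of exactly $N_{M_\infty,0}(T)$ terms, each term having a base bounded away from $0$ and hence tending to $1$ as $w'\to 0^+$, while the prefactors $T^{-w'},\delta^{-w'}$ also tend to $1$; so both outer quantities converge to $N_{M_\infty,0}(T)$, forcing $\lim_q N_{M_q,0}(T)=N_{M_\infty,0}(T)$.

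The step I expect to be the real obstacle is this last one: $N_{M_q,0}(T)$ is integer valued, so one must rule out spurious small eigenvalues of $M_q$ sliding below $T$, and the clean device is the $w'\to0^+$ limit above, which works precisely because $T$ is bounded away from the spectrum of $M_\infty$ — this is what keeps the ratios of consecutive-weight counting functions trapped between $1$ and a fixed constant raised to the power $w'$. By contrast, the extension to $0<w\le 3/2$ is routine: it is the same mean value/monotonicity squeeze already carried out twice in the paper, and the range $w>3/2$ is immediate once Proposition~\ref{degeneratingellipticcountingprop}(b) removes the degenerating contribution.
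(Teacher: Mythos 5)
Your proposal is correct and follows essentially the route the paper itself indicates: the corollary is stated there as an immediate consequence of Theorem \ref{convergencecountingfunctions}, part (b) of Proposition \ref{degeneratingellipticcountingprop} (which kills the degenerating term $G_{M_{q},w}(T)$ for $T\le 1/4$), and the extension of the counting functions to low weights via the same monotonicity-plus-mean-value descent used in Theorems \ref{behaviorcountingfunctionscompact} and \ref{behaviorcountingfunctionsnoncompact}, which is exactly what you carry out. Your $w'\to 0^{+}$ sandwich for the weight-zero case is a harmless variant of running the descent one further step (where the hypothesis that $T$ is not an eigenvalue of $M_{\infty}$ enters in either formulation), so no substantive difference from the paper's intended argument.
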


\hskip 0.2in In the case $T\le 1/4$, the weighted spectral counting functions for $M_{q}$ in both compact and non-compact case (see equations (\ref{countingfunctioncompact}) and (\ref{countingfunctionnoncompact})) turn out to be
\begin{align*}
N_{M_{q},w}(T) = \sum_{\lambda_{n,q} < T} (T-\lambda_{n,q})^{w} \,\,.
\end{align*}
The above corollary states that convergence through elliptic degeneration of functions that involve the eigenvalues below 1/4.
Furthermore, the above corollary in implies the convergence of these small eigenvalues through elliptic degeneration. In particular, if the eigenvalue $\lambda_{n,q}$ has multiplicity one, then we have
\begin{align*}
\lim_{q \to \infty} \lambda_{n,q} = \lambda_{n,\infty}\,\,.
\end{align*}

\begin{rmk}\label{countingfunctionerrorterm}
\emph{
We note that Theorems \ref{behaviorcountingfunctionscompact} and \ref{behaviorcountingfunctionsnoncompact} present the asymptotic behavior of the counting function $N_{M_{q},w}(T)$ for $T>1/4$ and weights $0\le w\le 3/2$, in both the compact and non-compact case. These two results only mention the behavior of the leading term and nothing about the error term. Modifications in the course of the proof of the two theorems can lead to results about the error term. To get such results, one needs to assume something extra about the rate at which $\varepsilon$ approaches zero. More precisely, $\varepsilon$ should approach zero at a rate that depends on the degenerating parameter $q$. A similar situation had been studied in \cite{HJL 97} in the context of hyperbolic degeneration. \\
\text{\hskip 0.2in} From Theorem \ref{convergencecountingfunctions}, we have that for $w>1/2$, $T>1/4$, and arbitrarily large values of the degenerating parameter $q$
\begin{align*}
N_{M_{q},w+1}(T) = N_{M_{\infty},w+1}(T)+G_{M_{q},w+1}(T)+O(f(q))\,\,,
\end{align*}
for some function $f(q)$ which approaches zero when $q$ runs off to infinity. Choose $\varepsilon(q)>0$. Applying the mean value theorem on the interval $[T,T+\varepsilon(q)]$ allows us to write
\begin{align*}
N_{M_{q},w}(T) \le& \frac{1}{w+1} \frac{N_{M_{\infty},w+1}(T+\varepsilon(q)) - N_{M_{\infty},w+1}(T)}{\varepsilon(q)} \\
&+ \frac{1}{w+1}\frac{G_{M_{q},w+1}(T+\varepsilon(q)) - G_{M_{q},w+1}(T)}{\varepsilon(q)}
+ O\left(\frac{f(q)}{\varepsilon(q)}\right)\,\,.
\end{align*}
Using a linear approximation for the first two terms in the middle of the above inequality gives
\begin{align*}
N_{M_{q},w}(T) \le& N_{M_{\infty},w}(T) +  {\varepsilon(q)} \frac{d}{dT} N_{M_{\infty},w}(T_1) \\
&+ G_{M_{q},w}(T) +  {\varepsilon(q)} \frac{d}{dT} G_{M_{q},w}(T_2)
+ O\left(\frac{f(q)}{\varepsilon(q)}\right)\,\,,
\end{align*}
for some $T_1, T_2 \in [T,T+\varepsilon(q)].$ 
In a similar fashion, by applying the mean value theorem this time on the interval $[T-\varepsilon(q),T]$, it follows that
\begin{align*}
N_{M_{q},w}(T) \ge& N_{M_{\infty},w}(T) +  {\varepsilon(q)} \frac{d}{dT} N_{M_{\infty},w}(T_3) \\
&+ G_{M_{q},w}(T) +  {\varepsilon(q)} \frac{d}{dT} G_{M_{q},w}(T_4)
+ O\left(\frac{f(q)}{\varepsilon(q)}\right)\,\,,
\end{align*}
for some $T_3, T_4 \in [T-\varepsilon(q),T].$ 
Theorems \ref{behaviorcountingfunctionscompact}  and \ref{behaviorcountingfunctionsnoncompact} applied to the derivative terms imply the following asymptotic formula
\begin{align*}
N_{M_{q},w}(T) = N_{M_{\infty},w}(T) + G_{M_{q},w}(T) + O\left({\varepsilon(q)} \log Q\right)
+ O\left(\frac{f(q)}{\varepsilon(q)}\right)\,\,.
\end{align*}
One needs to optimize the way in which $\varepsilon(q)$ approaches zero so that the amount of error is minimized, namely by setting
$\varepsilon(q) = \sqrt{f(q)/\log Q}\,\,.$
 Optimizing the error in the case $w>1/2$ allows then for the improvement of the error in the case $w\ge 0.$
}
\end{rmk}

\hskip 0.2in Using the above remark together with some calculations that had been pointed out by Dennis Hejhal, we have the following result.
\begin{thm}\label{countingfunctionwitherror}
Let $M_{q}$ denote an elliptically degenerating family of
compact or non-compact hyperbolic Riemann surfaces of finite volume
converging to the non-compact hyperbolic surface $M_{\infty}.$
Then
\begin{align*}
N_{M_{q},0}(T)= c_0(T) \log Q + O\left( (\log Q)^{3/4} \right)\,\,.
\end{align*}
\end{thm}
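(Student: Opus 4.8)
The plan is to push the error-term form of the mean-value argument of Remark~\ref{countingfunctionerrorterm} down to weight $w=0$ and then substitute the asymptotics of the two resulting pieces; recall that $q\to\infty$ forces $\log Q\to\infty$. The input is Theorem~\ref{convergencecountingfunctions}: for each fixed $w>3/2$ and fixed $T>1/4$, and — by monotonicity of the counting functions in $T$ — uniformly for $T'$ in a fixed small interval about $T$,
\begin{align*}
N_{M_{q},w}(T') = N_{M_{\infty},w}(T') + G_{M_{q},w}(T') + O(f(q)),
\end{align*}
where $f(q)\to 0$ (one takes $f(q)$ to be the supremum over that interval of the nonnegative difference, still tending to $0$ by pointwise convergence). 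The goal is to reach the same identity with $w=0$ and error $O\!\bigl(f(q)^{1/4}(\log Q)^{3/4}\bigr)$, whence the theorem follows at once.

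First I would record the two facts the iteration uses. (i) The relation $\tfrac{1}{w+1}\tfrac{d}{dT}N_{\bullet,w+1}(T)=N_{\bullet,w}(T)$ holds for $N_{M_{q},\bullet}$ and $N_{M_{\infty},\bullet}$ (this is (\ref{countingfunctiondiffeqn}) and its non-compact analogue) and for $G_{M_{q},\bullet}$ as well, by differentiating the explicit integral of Proposition~\ref{degeneratingellipticcountingprop}(a) under the integral sign, the endpoint terms at $r=\pm\sqrt{T-1/4}$ vanishing because $(T-1/4-r^2)^{w+1}$ does. (ii) The uniform-in-$q$ bound $\bigl|\tfrac{d}{dT}G_{M_{q},w}(T')\bigr|=O(\log Q)$ for $T'$ in a fixed neighbourhood of $T$, obtained from Proposition~\ref{degeneratingellipticcountingprop}(a) — differentiating under the integral for $w>0$, and from the endpoint contribution, of size $O\!\bigl(\log Q/\sqrt{T-1/4}\bigr)$, for $w=0$ — together with the estimate $S(q_{\gamma})=\tfrac1\pi\log q_{\gamma}+O(1)$ proved inside Proposition~\ref{degeneratingellipticcountingprop}, summed over the finitely many degenerating $\gamma$ so that $\sum_{\gamma}S(q_{\gamma})=\tfrac1\pi\log Q+O(1)$.

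Then comes the two-step iteration, exactly as indicated in the Remark. Step one: fix $w\in(1/2,3/2]$, apply the mean value theorem to $N_{M_{q},w+1}$ on $[T,T+\varepsilon]$ and on $[T-\varepsilon,T]$, sandwich $N_{M_{q},w}(T)$ between the resulting difference quotients by monotonicity, insert the $(w+1)$-asymptotic above, and linearise the difference quotients of $N_{M_{\infty},w+1}$ and $G_{M_{q},w+1}$ using facts (i) and (ii); the choice $\varepsilon=\varepsilon(q)=\sqrt{f(q)/\log Q}$ balances the errors $O(\varepsilon\log Q)$ and $O(f(q)/\varepsilon)$ and yields
\begin{align*}
N_{M_{q},w}(T') = N_{M_{\infty},w}(T') + G_{M_{q},w}(T') + O\!\bigl(\sqrt{f(q)\log Q}\bigr)
\end{align*}
for all $w\in(1/2,3/2]$ and $T'$ near $T$. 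Step two: repeat with $w=0$, now using $w+1=1\in(1/2,3/2]$ and Step one in place of Theorem~\ref{convergencecountingfunctions}; since the input error is now $\sqrt{f(q)\log Q}$, the optimal cut is $\varepsilon(q)=\bigl(f(q)/\log Q\bigr)^{1/4}$ and the error becomes $O\!\bigl(\varepsilon(q)\log Q\bigr)=O\!\bigl(f(q)^{1/4}(\log Q)^{3/4}\bigr)$, so
\begin{align*}
N_{M_{q},0}(T) = N_{M_{\infty},0}(T) + G_{M_{q},0}(T) + O\!\bigl(f(q)^{1/4}(\log Q)^{3/4}\bigr).
\end{align*}
In the non-compact case I would run these two steps on the monotone hat-counting function $\widehat{N_{M_{q},w}(T)}$ of (\ref{countingfunctionnoncompact2}) as in the proof of Theorem~\ref{behaviorcountingfunctionsnoncompact}, then restore the remaining terms of (\ref{countingfunctionnoncompact}), which are $O(1)$ uniformly in $q$.

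To conclude, $N_{M_{\infty},0}(T)$ is a constant (the surface $M_{\infty}$ being fixed), hence $O(1)$, and $G_{M_{q},0}(T)=c_0(T)\log Q+O(1)$ by (\ref{asymptoticelldegencountingfunction}) with $w=0$ (equivalently Proposition~\ref{degeneratingellipticcountingprop}(c)); since $f(q)\to 0$ the factor $f(q)^{1/4}$ is bounded, so $f(q)^{1/4}(\log Q)^{3/4}=O\!\bigl((\log Q)^{3/4}\bigr)$, which also absorbs the $O(1)$ terms once $\log Q\ge 1$, and one obtains $N_{M_{q},0}(T)=c_0(T)\log Q+O\!\bigl((\log Q)^{3/4}\bigr)$. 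I expect fact (ii), the uniform derivative bound on $G_{M_{q},w}$ near $T$, to be the main obstacle — as opposed to routine bookkeeping — because the entire scheme rests on the large term growing only like $\log Q$ and on this persisting under $T$-differentiation, which is precisely where the $S(q_\gamma)$ estimate is reused; a secondary check is that the implied constants produced in Step one are independent of $q$ for fixed $T>1/4$ (they involve only $c_0,\dots,c_2$ and the $S(q_\gamma)$ bound), so that feeding Step one into Step two is legitimate.
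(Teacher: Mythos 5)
Your proposal is correct and follows essentially the same route as the paper: two applications of the mean-value iteration of Remark~\ref{countingfunctionerrorterm}, optimizing $\varepsilon(q)=(\log Q)^{-1/2}$ and then $(\log Q)^{-1/4}$, and finally substituting $G_{M_{q},0}(T)=c_0(T)\log Q+O(1)$ from (\ref{asymptoticelldegencountingfunction}). The only cosmetic difference is that you carry a decaying error $f(q)\to 0$ through the iteration (yielding the marginally sharper $O(f(q)^{1/4}(\log Q)^{3/4})$ before discarding the gain), whereas the paper simply takes $f(q)=1$; both land on $O((\log Q)^{3/4})$.
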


\begin{proof}
The proof uses two applications of Remark \ref{countingfunctionerrorterm}. 
In the first step we set $w=1$. Following the computations of Proposition \ref{degeneratingellipticcountingprop}, we can take $f(q) = 1$. 
In this case, Remark \ref{countingfunctionerrorterm} begins with
\begin{align*}
N_{M_{q},2}(T) = N_{M_{\infty},2}(T) + G_{M_{q},2}(T) + O(1)
\end{align*}
and ends with
\begin{align*}
N_{M_{q},1}(T) = N_{M_{\infty},1}(T) + G_{M_{q},1}(T) + O\left({\varepsilon(q)} \log Q\right)
+ O\left(\frac{1}{\varepsilon(q)}\right)\,\,.
\end{align*}
Minimizing the error term implies $\varepsilon(q) = \left(\log Q\right)^{-1/2}\,\,$.

In the second step, Remark \ref{countingfunctionerrorterm} starts with 
\begin{align*}
N_{M_{q},1}(T) = N_{M_{\infty},1}(T) + G_{M_{q},1}(T) + O\left((\log Q)^{1/2}\right)
\end{align*}
and ends with
\begin{align*}
N_{M_{q},0}(T) = N_{M_{\infty},0}(T) + G_{M_{q},0}(T) + O\left({\varepsilon(q)} \log Q\right)
+ O\left(\frac{(\log Q)^{1/2}}{\varepsilon(q)}\right)\,\,.
\end{align*}
Minimizing the error term implies $\varepsilon(q) = \left(\log Q\right)^{-1/4}\,\,$. Consequently,
\begin{align*}
N_{M_{q},0}(T) = N_{M_{\infty},0}(T) + G_{M_{q},0}(T) + O\left( (\log Q)^{3/4}\right)\,\,.
\end{align*}
By formula (\ref{asymptoticelldegencountingfunction}) together with Theorems \ref{behaviorcountingfunctionscompact}  and \ref{behaviorcountingfunctionsnoncompact}, the first two terms on the right-hand side above grow like $c_0(T)\log Q\,\,.$
\end{proof}

\begin{example}
\emph{\textbf{Hecke triangle groups} Let $G_N$ be the Hecke
triangle group, which is the discrete group generated by
\begin{equation*}
z \mapsto -1/z \textrm{ and } z  \mapsto z + 2 \cos(\pi/N)
\end{equation*}
for any integer $N \ge 3$. The group is commensurable with
$\textrm{PSL}(2,\mathbb{Z})$ if and only if in the three cases when
$N = 3, 4, 6$. In all other cases, the non-arithmetic nature of
$G_N$ is such that certain precise, theoretical
computations can be impossible. However, the explicit nature of the
group theoretic definition of $G_N$ is such that numerical
methods can be employed (see, for example, \cite{He 92}). It can be shown that for each $N$, the quotient space
$G_N\backslash\mathbb{H}$ has genus zero with one cusp and
three elliptic points of order 2, 3, and $N$ (see \cite{He 83},
\cite{He 92}, and references therein). As such, the results in the
present paper apply. Specifically, Theorem \ref{countingfunctionwitherror} determines the
accumulation of the spectral densities as a function of $N$, a
result which is attributed to Selberg (see p. 579 of \cite{He 83}).
In other words, Theorem \ref{countingfunctionwitherror} above can be viewed as providing precise
quantification to Selberg's result.  }
\end{example}

\section{Spectral functions}
\label{Spectral functions}

\hskip 0.2in In this section, we investigate the behavior through degeneration of the spectral zeta function and Hurwitz zeta function, the former being a special case of the latter.
After we recall definitions, we present the analytic properties these functions posses as well as describe their behavior on a family of elliptically degenerating surfaces. The main ingredient in the process is the analysis of the various integral transforms of the trace of the heat kernel that realize these spectral functions.

\subsection{Spectral zeta function}

\hskip 0.2in Let us assume first that the surface $M$ is compact (with 1 connected component.) In this case, the spectral theory of the Laplacian asserts the spectrum $0 = \lambda_0 < \lambda_1 \le \lambda_2 \le \ldots \nearrow \infty$ counted with multiplicity. The positive eigenvalues can be used to form a Dirichlet series, the spectral zeta function $\zeta_{M}(s)$ which is defined by
\begin{align*}
\zeta_{M}(s) = \sum_{\lambda_{n}>0} \lambda_{n}^{-s}\,\,.
\end{align*}
By Weyl's law (\ref{Weyl'slaw}), the series converges absolutely and uniformly for $\mathrm{Re}(s) \ge 1 + \varepsilon$ with $\varepsilon >0\,\,.$ Hence $\zeta_{M}(s)$ is an analytic function for $\mathrm{Re}(s)>1\,\,.$

\hskip 0.2in Furthermore, we have
\begin{align*}
\Gamma(s) \zeta_{M}(s) &= \sum_{\lambda_{n}>0} \lambda_{n}^{-s} \Gamma(s)\\
&=\sum_{\lambda_{n}>0} \lambda_{n}^{-s} \int_{0}^{\infty} e^{-t} t^s \frac{dt}{t}\\ 
&= \int_{0}^{\infty} \sum_{\lambda_{n} >0} e^{-\lambda_{n}t} t^s \frac{dt}{t}\\ 
&=\int_{0}^{\infty}[\textrm{Str}K_M(t) - 1] t^s\frac{dt}{t}\,\,.
\end{align*}
The behavior of $\textrm{Str}K_M(t)$ at $t=0$ and $t=\infty$ (see (\ref{standardtraceasymp0}) and (\ref{standardtraceasympinfty}) respectively) shows that the above integral is defined for $\mathrm{Re}(s) > 1$.
Parenthetically, if the surface had $c_M$ connected components, then the value $1$ (c.f. the zero eigenvalue) in the above integrand would be replaced by $c_M$. For simplicity of notation, we assume that $c_M = 1$. That said, the above manipulations show that the spectral zeta function is essentially the Mellin transform of the
standard trace of the heat kernel, namely 
\begin{align}\label{spectralzetadefinition}
\zeta_{M}(s) 
&=\frac{1}{\Gamma(s)}\int_{0}^{\infty}[\textrm{Str}K_M(t) - 1]t^s\frac{dt}{t}\,\,.
\end{align}

\begin{prop}\label{spectralzetaprop}
Suppose that $M$ is a compact hyperbolic Riemann surface. The spectral zeta function $\zeta_{M}(s)$ has meromorphic continuation
to all $s \in \mathbb{C}$, except for a simple pole at $s = 1$ with residue $\mathrm{vol}(M)/(4\pi)\,\,.$
\end{prop}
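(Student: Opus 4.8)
The plan is to work from the Mellin representation (\ref{spectralzetadefinition}), i.e. $\Gamma(s)\zeta_{M}(s)=\int_{0}^{\infty}[\mathrm{Str}K_M(t)-1]\,t^{s-1}\,dt$ for $\mathrm{Re}(s)>1$, and to split the integral at $t=1$. On $[1,\infty)$ the long-time bound (\ref{standardtraceasympinfty}) gives $\mathrm{Str}K_M(t)-1=O(e^{-ct})$, so $\int_{1}^{\infty}[\mathrm{Str}K_M(t)-1]t^{s-1}\,dt$ converges for every $s\in\mathbb{C}$ and defines an entire function; since $1/\Gamma(s)$ is entire, this piece contributes nothing to the poles of $\zeta_{M}$ and can be set aside.

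The substance is the piece over $(0,1)$, and for it I would first upgrade the crude near-zero estimate (\ref{standardtraceasymp0}) to a full asymptotic expansion
\begin{align*}
\mathrm{Str}K_M(t)=\frac{\mathrm{vol}(M)}{4\pi t}+\sum_{k=0}^{N}c_k t^{k}+O\!\left(t^{N+1}\right)\qquad(t\to0^{+})
\end{align*}
for each $N$, carrying only non-negative integer powers beyond the leading $t^{-1}$. This is extracted termwise from the geometric side (\ref{regularizedtracegeometric}): the hyperbolic trace is $O(e^{-c/t})$ by (\ref{hyperbolictraceasymp}), hence contributes no terms to the expansion and has an entire Mellin transform over $(0,1)$; the identity trace equals $\tfrac{\mathrm{vol}(M)e^{-t/4}}{4t}\int_{0}^{\infty}e^{-tr^2}\mathrm{sech}^2(\pi r)\,dr$, and Watson's lemma gives $\int_{0}^{\infty}e^{-tr^2}\mathrm{sech}^2(\pi r)\,dr\sim\sum_{k\ge0}\tfrac{(-t)^k}{k!}\int_{0}^{\infty}r^{2k}\mathrm{sech}^2(\pi r)\,dr$, whose $k=0$ coefficient $1/\pi$ yields exactly the leading term $\mathrm{vol}(M)/(4\pi t)$; and in each summand of the elliptic trace (\ref{elliptictrace}) the substitution $u=2\sqrt t\,v$ turns the inner integral into $2\int_{0}^{\infty}e^{-v^2}g_n(2\sqrt t\,v)\,dv$ with $g_n$ smooth and even, so Taylor expansion of $g_n$ produces an expansion in integer powers of $t$. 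Granting this, decompose $\mathrm{Str}K_M(t)-1$ on $(0,1)$ as $\tfrac{\mathrm{vol}(M)}{4\pi}t^{-1}+(c_0-1)+\sum_{k=1}^{N}c_k t^{k}+R_N(t)$ with $R_N(t)=O(t^{N+1})$; integrating the explicit terms against $t^{s-1}$ over $(0,1)$ gives $\tfrac{\mathrm{vol}(M)}{4\pi}\tfrac{1}{s-1}+\tfrac{c_0-1}{s}+\sum_{k=1}^{N}\tfrac{c_k}{s+k}$, which is meromorphic on $\mathbb{C}$ with simple poles only at $s=1,0,-1,\dots,-N$, while $\int_{0}^{1}R_N(t)t^{s-1}\,dt$ is holomorphic for $\mathrm{Re}(s)>-N-1$. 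Letting $N\to\infty$ continues $\Gamma(s)\zeta_{M}(s)$ meromorphically to all of $\mathbb{C}$, with simple poles confined to $\{1,0,-1,-2,\dots\}$.

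To conclude, divide by $\Gamma(s)$: the function $1/\Gamma(s)$ is entire and vanishes simply at $s=0,-1,-2,\dots$, which exactly cancels the poles there, leaving the unique simple pole of $\zeta_{M}(s)$ at $s=1$; since $\Gamma(1)=1$ and the residue of $\Gamma(s)\zeta_{M}(s)$ at $s=1$ equals $\mathrm{vol}(M)/(4\pi)$ (coming solely from the $t^{-1}$ term), the residue of $\zeta_{M}$ at $s=1$ is $\mathrm{vol}(M)/(4\pi)$. The main obstacle is the first step, namely justifying the complete small-time expansion with uniform control of the remainder: the earlier sections record only the leading-order behavior, and because the surfaces here may carry conical (elliptic) points one should derive the expansion directly from the trace formula via the Watson's-lemma arguments above rather than quote the smooth closed-manifold heat-trace expansion.
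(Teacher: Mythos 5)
Your proposal is correct and follows essentially the same route as the paper's proof: split the Mellin integral at $t=1$, observe the tail over $[1,\infty)$ is entire by the exponential decay of $\mathrm{Str}K_M(t)-1$, subtract the terms of the small-time expansion $\mathrm{Str}K_M(t)=\tfrac{\mathrm{vol}(M)}{4\pi t}+b_0+b_1t+\cdots$ on $(0,1)$ to produce the simple pole at $s=1$ with residue $\mathrm{vol}(M)/(4\pi)$, and let the zeros of $1/\Gamma(s)$ at $s=0,-1,-2,\dots$ cancel the remaining poles. The one genuine addition on your part is that you actually justify the full integer-power small-time expansion from the geometric side of the trace formula (Watson's lemma applied to the identity and elliptic terms, exponential suppression of the hyperbolic term), whereas the paper simply asserts this expansion; your justification is sound and fills a gap the paper glosses over.
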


\begin{proof}
The proof follows from the analysis of the integral representation of the spectral zeta from (\ref{spectralzetadefinition}) above. 
We start by breaking off the integral as follows 
\begin{align}\label{spectralzetacontinuation1}
\notag\zeta_{M}(s)  & =  \frac{1}{\Gamma(s)}\int_{0}^{1} \textrm{Str}K_M(t)t^{s-1}dt - \frac{1}{\Gamma(s+1)} +
\frac{1}{\Gamma(s)} \int_{1}^{\infty} [\textrm{Str}K_M(t) - 1]t^{s-1}dt \\
& =  G(s) - \frac{1}{\Gamma(s+1)} +
\frac{1}{\Gamma(s)} \int_{1}^{\infty} [\textrm{Str}K_M(t) - 1]t^{s-1}dt \,\,.
\end{align}
The second term above is entire. Since $\textrm{Str}K_M(t) - 1$ has exponential decay at infinity (c.f (\ref{standardtraceasympinfty})), the third term is also analytic. So we only need to continue the term containing the integral over $[0,1]$, which we call $G(s)$. For the latter, we recall (\ref{standardtraceasympinfty}), namely at $t=0$
\begin{align}\label{spectralzetacontinuation2}
\textrm{Str}K_M(t) = \frac{b_{-1}}{t} + b_0 + b_1t + b_2t^2 + \ldots
\end{align}
where for simplicity we use $b_{-1}$ in place of $\textrm{vol}(M)/(4\pi)$. That said, we can write for $\mathrm{Re}(s)>1$
\begin{align}\label{spectralzetacontinuation3}
\notag G(s)   & =  \frac{1}{\Gamma(s)} \int_{0}^{1} \textrm{Str}K_M(t) t^{s-1}dt \\
& =   \frac{b_{-1}}{\Gamma(s)(s-1)}  + \frac{1}{\Gamma(s)}  \int_{0}^{1} \left[ \textrm{Str}K_M(t)- \frac{b_{-1}}{t} \right] t^{s-1}dt\,\,.
\end{align}
The first term in the right-hand side of (\ref{spectralzetacontinuation3}) is analytic except for a simple pole at $s=1$ with residue $b_{-1} = \textrm{vol}(M)/(4\pi)$.
The second term, call it $G_1(s)$, is analytic for $\mathrm{Re}(s) > 0$. We continue this term as follows
\begin{align}\label{spectralzetacontinuation4}
\notag G_1(s)   & =  \frac{1}{\Gamma(s)} \int_{0}^{1} \left[\frac{\textrm{Str}K_M(t)}{t} - \frac{b_{-1}}{t^2}\right]  t^{s}dt \\
& =   \frac{b_{0}}{\Gamma(s)s}  + \frac{1}{\Gamma(s)}  \int_{0}^{1} \left[ \frac{\textrm{Str}K_M(t)}{t}- \frac{b_{-1}}{t^2} - \frac{b_{0}}{t}\right] t^{s}dt\,\,.
\end{align}
The first term in the right-hand side of (\ref{spectralzetacontinuation4}) is entire,
while the second term, call it $G_2(s)$, is analytic for $\mathrm{Re}(s) > -1$. By the $n$-th iterate ($n=0,1,2,\ldots$), the function $G(s)$ satisfies the formula
\begin{align*}
G(s)   = \sum_{k=-1}^{n-1} \frac{b_{k}}{\Gamma(s)(s+k)}  +  
\frac{1}{\Gamma(s)}  \int_{0}^{1} \left[ \frac{\textrm{Str}K_M(t)}{t^n}- \sum_{k=-1}^{n-1} \frac{b_{k}}{t^{n-k}}  \right] t^{s+n-1}dt\,\,
\end{align*}
with the right-hand side being analytic for $\textrm{Re}(s)> -n$.  In this fashion, the spectral zeta can be continued to all $s\in \mathbb{C}\,\,.$
\end{proof}

\hskip 0.2in If the surface $M$ is not compact, one defines the spectral zeta by the Mellin transform of the standard trace as in the formula (\ref{spectralzetadefinition}) above. Similar arguments may be employed to show the analytic continuation of the spectral zeta associated to a non-compact surface.

\hskip 0.2in For $\alpha\in(0,1/4)$ we define the $\alpha$-truncated standard trace by
\begin{align*}
\textrm{Str}K_M^{(\alpha)}(t) = \textrm{Str}K_M(t) - \sum_{\lambda_{n} < \alpha } e^{-\lambda_{n} t}\,\,.
\end{align*}
By considering the Mellin transform of the
standard trace we can express the truncated spectral zeta function
as
\begin{align*}
\zeta_{M}^{(\alpha)}(s) = \frac{1}{\Gamma(s)}\int_{0}^{\infty}
\textrm{Str}K_M^{(\alpha)}(t)t^s\frac{dt}{t}.
\end{align*}
With these in mind, we have the following result concerning the behavior of the truncated spectral zeta function through elliptic degeneration.

\begin{thm}\label{spectralzetaconvergence}
Let $M_{q}$ be an elliptically degenerating sequence of compact
or non-compact hyperbolic Riemann surfaces of finite volume with
limiting surface $M_{\infty}$. Let $\alpha < 1/4$ be any number that
is not an eigenvalue of $M_{\infty}$. Then for any $s \in
\mathbb{C}$, we have
\begin{align*}
\lim_{q\to \infty} \Bigg[\zeta_{M_{q}}^{(\alpha)}(s)-
\frac{1}{\Gamma(s)}\int_{0}^{\infty}
\mathrm{Dtr}K_{M_{q}}(t)t^s\frac{dt}{t}\Bigg] =
\zeta_{M_{\infty}}^{(\alpha)}(s).
\end{align*}
Furthemore, the convergence is uniform in half planes of the form
$\mathrm{Re}(s)>C$.
\end{thm}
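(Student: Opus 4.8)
The plan is to realize the bracketed expression on the left as a single Mellin transform of a heat-trace difference, and then to rerun, uniformly in $q$, the continuation argument from the proof of Proposition~\ref{spectralzetaprop}, fed by the convergence results of Section~\ref{Asymptotics of heat kernels and traces}. Write $h_q(t):=\mathrm{Str}K_{M_q}^{(\alpha)}(t)-\mathrm{DTr}K_{M_q}(t)$ and $h_\infty(t):=\mathrm{Str}K_{M_\infty}^{(\alpha)}(t)$, so that $\zeta_{M_\infty}^{(\alpha)}(s)=\tfrac{1}{\Gamma(s)}\int_0^\infty h_\infty(t)t^{s-1}dt$ and, for $\mathrm{Re}(s)$ large, the bracket equals $\tfrac{1}{\Gamma(s)}\int_0^\infty h_q(t)t^{s-1}dt$. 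By Definition~\ref{standardheattrace} and the definition of the $\alpha$-truncated standard trace,
\[
h_q(t)=\left[\mathrm{HTr}K_{M_q}(t)+\mathrm{ETr}K_{M_q}(t)-\mathrm{DTr}K_{M_q}(t)\right]+\mathrm{vol}(M_q)K_{\mathbb{H}}(t,0)-\sum_{\lambda_{q,n}<\alpha}e^{-\lambda_{q,n}t}.
\]
The bracket converges pointwise in $t>0$ by Theorem~\ref{pointwiseanduniformconvergence}(a); one has $\mathrm{vol}(M_q)\to\mathrm{vol}(M_\infty)$ by Gauss--Bonnet, since a degenerating cone of angle $2\pi/q_i$ contributes $2\pi(1-1/q_i)\to2\pi$ to the area, matching the cusp it becomes; and the finite eigenvalue sum converges because $\alpha<1/4$ is not an eigenvalue of $M_\infty$, so the eigenvalues of $M_q$ below $\alpha$ converge to those of $M_\infty$ (Corollary~\ref{convergencecountingfunctionssmallT} and the ensuing discussion). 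Summing and applying Definition~\ref{standardheattrace} on $M_\infty$ gives $\lim_{q\to\infty}h_q(t)=h_\infty(t)$ for each fixed $t>0$.

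The second step is to produce a $q$-uniform integrable majorant. Near $t=0$ one has $h_q(t)=O(t^{-3/2})$ uniformly in $q$: the bracket is $O(t^{-3/2})$ by Corollary~\ref{smalltasymptotics}, the eigenvalue sum has a uniformly bounded number of terms hence is $O(1)$, and $\mathrm{vol}(M_q)K_{\mathbb{H}}(t,0)=O(t^{-1})$. Near $t=\infty$, for $q$ large $\alpha\notin\mathrm{spec}(M_q)$ (again by the eigenvalue convergence), so the bracket minus $\sum_{\lambda_{q,n}<\alpha}e^{-\lambda_{q,n}t}$ is exactly $\mathrm{HTr}K_{M_q}^{(\alpha)}(t)+\mathrm{ETr}K_{M_q}^{(\alpha)}(t)-\mathrm{DTr}K_{M_q}(t)$, which is $O(e^{-ct})$ uniformly for any fixed $c<\alpha$ by Theorem~\ref{uniformlongtimeasymptotics}; combined with $\mathrm{vol}(M_q)K_{\mathbb{H}}(t,0)=O(e^{-t/4})$ this gives $h_q(t)=O(e^{-ct})$ uniformly on $[1,\infty)$ (absorbing the finitely many small $q$ into the constant). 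Hence, for $\mathrm{Re}(s)>3/2$, both $h_q(t)t^{s-1}$ and $h_\infty(t)t^{s-1}$ are dominated by the fixed integrable function $C\left(t^{-3/2}\mathbf{1}_{(0,1]}+e^{-ct}\mathbf{1}_{[1,\infty)}\right)t^{\mathrm{Re}(s)-1}$, and the dominated convergence theorem yields the asserted limit for $\mathrm{Re}(s)>3/2$, uniformly on compact subsets of that half-plane.

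To reach all $s\in\mathbb{C}$ I would reproduce the continuation scheme of Proposition~\ref{spectralzetaprop}. The required input is a small-$t$ asymptotic expansion $h_q(t)\sim a_{-1}(q)t^{-1}+a_0(q)+a_1(q)t+\cdots$, in powers of $t$ starting at $t^{-1}$, with remainders uniform in $q$ and with $a_k(q)\to a_k(\infty)$, the $a_k(\infty)$ being the corresponding coefficients of $h_\infty$. Indeed, $\mathrm{HTr}K_{M_q}(t)=O(e^{-c/t})$ contributes nothing to the expansion; $\mathrm{ETr}K_{M_q}(t)-\mathrm{DTr}K_{M_q}(t)$ is the elliptic trace of the finitely many non-degenerating elliptic elements of $\Gamma_q$, whose orders stay bounded, so by~(\ref{Hejhalelliptictrace}) it admits such an expansion with coefficients converging to those of $\mathrm{ETr}K_{M_\infty}(t)$; $\mathrm{vol}(M_q)K_{\mathbb{H}}(t,0)$ has the heat expansion of $\mathbb{H}$ with $a_{-1}(q)=\mathrm{vol}(M_q)/(4\pi)$; and $\sum_{\lambda_{q,n}<\alpha}e^{-\lambda_{q,n}t}$ is entire in $t$ with coefficients converging by Corollary~\ref{convergencecountingfunctionssmallT}. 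Granting this, set $D_q(s):=\zeta_{M_q}^{(\alpha)}(s)-\tfrac{1}{\Gamma(s)}\int_0^\infty\mathrm{DTr}K_{M_q}(t)t^{s-1}dt-\zeta_{M_\infty}^{(\alpha)}(s)$, which for $\mathrm{Re}(s)>3/2$ equals $\tfrac{1}{\Gamma(s)}\int_0^\infty(h_q-h_\infty)(t)t^{s-1}dt$; splitting $\int_0^\infty=\int_0^1+\int_1^\infty$ and subtracting the first $N+2$ terms of the expansion of $h_q-h_\infty$ from the $[0,1]$ part yields, exactly as in~(\ref{spectralzetacontinuation3})--(\ref{spectralzetacontinuation4}),
\[
D_q(s)=\sum_{k=-1}^{N}\frac{a_k(q)-a_k(\infty)}{\Gamma(s)(s+k)}+\frac{1}{\Gamma(s)}\int_0^1\left[(h_q-h_\infty)(t)-\sum_{k=-1}^{N}(a_k(q)-a_k(\infty))t^{k}\right]t^{s-1}dt+\frac{1}{\Gamma(s)}\int_1^\infty(h_q-h_\infty)(t)t^{s-1}dt,
\]
the right-hand side being meromorphic for $\mathrm{Re}(s)>-N-1$ with a possible pole only at $s=1$. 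Each term tends to $0$ as $q\to\infty$, uniformly on compacta: the coefficient differences $a_k(q)-a_k(\infty)$ go to $0$, and the two integrals go to $0$ by dominated convergence, their integrands being dominated uniformly in $q$ by $Ct^{N+1}t^{\mathrm{Re}(s)-1}$ on $(0,1]$ and by $Ce^{-ct}t^{\mathrm{Re}(s)-1}$ on $[1,\infty)$ and tending to $0$ pointwise. Since $N$ is arbitrary, $D_q(s)\to0$ locally uniformly on $\mathbb{C}\setminus\{1\}$; at $s=1$ the function $D_q$ has a simple pole with residue $(\mathrm{vol}(M_q)-\mathrm{vol}(M_\infty))/(4\pi)\to0$, i.e. the residues of $\zeta_{M_q}^{(\alpha)}$ at $s=1$ converge to that of $\zeta_{M_\infty}^{(\alpha)}$, which is the right reading of the statement at that point; the locally uniform convergence then gives the stated uniformity on half-planes $\mathrm{Re}(s)>C$.

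I expect the only genuine work to be the $q$-uniform small-$t$ expansion of $h_q$ with convergent coefficients used in the last step. It is not deep---it reduces to the heat expansions of $\mathbb{H}$ and of the bounded-order non-degenerating elliptic elements, the hyperbolic contribution being exponentially small---but one must carefully track both the uniformity of the remainders in $q$ and the convergence of each individual coefficient, and invoke the convergence of the eigenvalues below $1/4$ from~\cite{GJ 16}. The only other subtlety is the behavior at $s=1$, where both truncated zeta functions carry a simple pole, so that ``convergence for any $s\in\mathbb{C}$'' is to be understood as convergence of values off $s=1$ together with convergence of residues at $s=1$.
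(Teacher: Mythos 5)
Your proof follows essentially the same route as the paper's: the same decomposition of $\mathrm{Str}K_{M_q}^{(\alpha)}(t)-\mathrm{DTr}K_{M_q}(t)$ into the volume term plus the truncated hyperbolic/elliptic bracket, the same appeal to Theorem~\ref{uniformlongtimeasymptotics} and the small-time bounds for a $q$-uniform majorant, dominated convergence, and the continuation scheme of Proposition~\ref{spectralzetaprop}. You are in fact more explicit than the paper on the two points it leaves implicit --- the $q$-uniform small-$t$ expansion with convergent coefficients needed to continue beyond a right half-plane, and the reading of the statement at the simple pole $s=1$, where convergence must be understood as convergence of residues --- so the argument is sound.
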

\begin{proof}
We have to show that
\begin{align*}
\lim_{q\to \infty} \frac{1}{\Gamma(s)} \int_{0}^{\infty} \left[
\textrm{Str}K_{M_{q}}^{(\alpha)}(t)- \textrm{Dtr}K_{M_{q}}(t)
\right]t^s\frac{dt}{t}=  \frac{1}{\Gamma(s)}\int_{0}^{\infty}\textrm{Str}K_{M_{\infty}}^{(\alpha)}(t)t^s\frac{dt}{t}\,\,.
\end{align*}

Recalling Definitions (\ref{standardheattrace}) and (\ref{alphatruncatedtrace}), the bracket in the left hand side above may be broken down as follows
\begin{align}\label{spectralzetaconv1}
\notag \textrm{Str}K_{M_{q}}^{(\alpha)}(t)- \textrm{Dtr}K_{M_{q}}(t)
  & = \text{vol}(M_q)K_{\mathbb H}(t,0) \\
 + & \left[ \textrm{Htr}K_{M_{q}}(t)  + \textrm{Etr}K_{M_{q}}(t)  -  \sum_{\lambda_{q,n} < \alpha } e^{-\lambda_{q,n} t} -\textrm{Dtr}K_{M_{q}}(t)
\right] \,\,.
\end{align}

For the volume containing term in the right hand side (\ref{spectralzetaconv1}), we split the integral as
\begin{align*}
\frac{1}{\Gamma(s)} \int_{0}^{\infty} \text{vol}(M_q)K_{\mathbb H}(t,0) t^s\frac{dt}{t} 
= \frac{\text{vol}(M_q)}{\Gamma(s)} \left[\int_{0}^{1} K_{\mathbb H}(t,0) t^s\frac{dt}{t}  + 
\int_{1}^{\infty} K_{\mathbb H}(t,0) t^s\frac{dt}{t} \right]
\end{align*}
and make the following remarks. The volume is bounded by a universal constant depending solely on the genus and the total number $\kappa$ of cusps and conical ends of the family, namely $\text{vol}(M_q) \le 2\pi(2g-2+\kappa)$. 
By (\ref{identitytraceasymp}), the kernel function $K_{\mathbb H}(t,0)$ decays exponentially as $t$ goes to infinity, so that the integral over $[1,\infty)$ is entire as a function of $s$. Using the same arguments as in the course of the proof of Proposition \ref{spectralzetaprop}, the integral over $[0,1]$ is analytic
for $\mathrm{Re}(s)>1$ and may be continued to all $s \in \mathbb{C}$. 

\hskip 0.2in The integral consisting of the rest of the terms in (\ref{spectralzetaconv1}), namely 
\begin{align}\label{spectralzetaconv2}
\frac{1}{\Gamma(s)} \int_{0}^{\infty} \left[
\textrm{Htr}K_{M_{q}}(t)  + \textrm{Etr}K_{M_{q}}(t)  -  \sum_{\lambda_{q,n} < \alpha } e^{-\lambda_{q,n} t} -\textrm{Dtr}K_{M_{q}}(t)
\right]t^s\frac{dt}{t} 
\end{align}
can be split over $[0,1]$ and $[1,\infty)$. 
From Theorem \ref{uniformlongtimeasymptotics}, the bracket in (\ref{spectralzetaconv2}) has exponential decay; so then the portion over $[0,1]$ is analytic for $\mathrm{Re(s)>0}$ and may be continued
to the whole complex plane, while the part of the integral over $[1,\infty)$ is entire as function of $s$. 
By the dominated convergence theorem, we can interchange the limit and
the integral. The proof then follows by the convergence Theorem \ref{pointwiseanduniformconvergence}.
\end{proof}

\subsection{Hurwitz zeta function}

\hskip 0.2in 
As in the case of the spectral zeta function, we start in the compact setting where the Hurwitz zeta function is represented via the 
Dirichlet series
\begin{align*}
\zeta_M(s,z) = \sum_{\lambda_n>0} (z+\lambda_n)^{-s}\,\,,
\end{align*}
for $z,s \in \mathbb C$ with $\mathrm{Re}(z) > 0$ and $\mathrm{Re}(s) > 1$.

\hskip 0.2in In the case when $M$ is compact and connected, the Hurwitz zeta function is essentially the Laplace-Mellin transform of the standard trace of the heat kernel
\begin{align}\label{Hurwitzzetaintegraltransform}
\zeta_M(s,z) =
\frac{1}{\Gamma(s)}\int_{0}^{\infty}[\textrm{Str}K_M(t)-1]e^{-zt}t^s\frac{dt}{t}\,\,.
\end{align}
The above integral transform allows to extend the definition of the Hurwitz zeta function to the non-compact setting.

\hskip 0.2in From Section1 of \cite{JoLa 93} (see also \cite{Sa 87}) we obtain the following result.
\begin{prop}
{For each $z \in \mathbb{C}$, the Hurwitz zeta function extends to a
meromorphic function to all $s\in \mathbb{C}$. }
\end{prop}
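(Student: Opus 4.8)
The plan is to mimic the proof of Proposition~\ref{spectralzetaprop}: start from the Laplace--Mellin representation \eqref{Hurwitzzetaintegraltransform}, split the $t$-integral at $t=1$, use the small-time asymptotics of $\textrm{Str}K_M(t)$ to continue the piece over $[0,1]$ by iterated subtraction, and observe that the piece over $[1,\infty)$ is entire in $s$. The only genuinely new feature is that \eqref{Hurwitzzetaintegraltransform} is literally valid only when $\mathrm{Re}(z)$ exceeds $-\lambda_1$, so for an arbitrary $z$ one must first peel off a finite portion of the spectrum before applying the Mellin machinery.

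Fix $z\in\mathbb{C}$ with $z\neq-\lambda_n$ for all $n$ (the Dirichlet series requires this, so this is no loss) and choose $\Lambda=\Lambda(z)>0$ with $\Lambda+\mathrm{Re}(z)>0$. By Weyl's law \eqref{Weyl'slaw} in the compact case, and by discreteness of the point spectrum in the non-compact case, only finitely many eigenvalues satisfy $0<\lambda_n\le\Lambda$, so one may write
\begin{equation*}
\zeta_M(s,z)=\sum_{0<\lambda_n\le\Lambda}(z+\lambda_n)^{-s}+\zeta_M^{>\Lambda}(s,z),\qquad \zeta_M^{>\Lambda}(s,z):=\sum_{\lambda_n>\Lambda}(z+\lambda_n)^{-s}.
\end{equation*}
The finite sum is entire in $s$, since each $(z+\lambda_n)^{-s}=e^{-s\log(z+\lambda_n)}$ is entire, so it suffices to continue $\zeta_M^{>\Lambda}(s,z)$. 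Since $\mathrm{Re}(z+\lambda_n)>0$ for $\lambda_n>\Lambda$, inserting $\Gamma(s)(z+\lambda_n)^{-s}=\int_0^\infty e^{-(z+\lambda_n)t}t^{s-1}\,dt$ and summing (the interchange being justified for $\mathrm{Re}(s)>1$ by \eqref{Weyl'slaw}) gives
\begin{equation*}
\Gamma(s)\,\zeta_M^{>\Lambda}(s,z)=\int_0^\infty\Theta_\Lambda(t)\,e^{-zt}\,t^{s-1}\,dt,\qquad \Theta_\Lambda(t):=\textrm{Str}K_M(t)-1-\sum_{0<\lambda_n\le\Lambda}e^{-\lambda_n t}.
\end{equation*}

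Now split the integral at $t=1$. Over $[1,\infty)$ the function $\Theta_\Lambda(t)$ decays like $e^{-\mu t}$, where $\mu>\Lambda$ is the smallest eigenvalue exceeding $\Lambda$ (from the spectral form of $\textrm{Str}K_M$), and since $\mu+\mathrm{Re}(z)>0$ the integrand is exponentially small; hence $\int_1^\infty\Theta_\Lambda(t)e^{-zt}t^{s-1}\,dt$ is entire in $s$. Over $[0,1]$ one invokes the small-time asymptotic expansion of the heat trace coming from the geometric side \eqref{regularizedtracegeometric} (exactly as used around \eqref{spectralzetacontinuation3}), together with the analyticity at $t=0$ of $e^{-zt}$ and of $\sum_{\lambda_n\le\Lambda}e^{-\lambda_n t}$, to obtain an expansion $\Theta_\Lambda(t)e^{-zt}\sim a_{-1}t^{-1}+a_0+a_1t+\cdots$ as $t\to0^+$, with $a_{-1}=\mathrm{vol}(M)/(4\pi)$. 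Iterated subtraction of the first $n+1$ terms, precisely as in the passage \eqref{spectralzetacontinuation3}--\eqref{spectralzetacontinuation4}, exhibits $\int_0^1\Theta_\Lambda(t)e^{-zt}t^{s-1}\,dt$ as a function holomorphic for $\mathrm{Re}(s)>-n$ plus the explicit term $\sum_{k=-1}^{n-1}a_k/(s+k)$; letting $n\to\infty$ continues it to all of $\mathbb{C}$ with at worst simple poles at $s=1,0,-1,-2,\dots$. Dividing by $\Gamma(s)$ cancels every pole except the one at $s=1$ (residue $\mathrm{vol}(M)/(4\pi)$), so $\zeta_M^{>\Lambda}(s,z)$, and hence $\zeta_M(s,z)$, is meromorphic on $\mathbb{C}$.

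The step requiring real care is the reduction to the case $\mathrm{Re}(z)>0$: for $\mathrm{Re}(z)$ very negative the large-$t$ tail of \eqref{Hurwitzzetaintegraltransform} diverges, and the remedy is exactly the peeling-off of the finitely many eigenvalues with $\lambda_n\le-\mathrm{Re}(z)$, whose finiteness rests on Weyl's law; everything after that is the routine heat-kernel continuation already carried out for the spectral zeta function. Two minor points should be noted: in the non-compact case the heat expansion from \eqref{regularizedtracenoncompactspectral} also carries half-integer powers of $t$ and a $\log t$ term, which produce additional poles of $\zeta_M(s,z)$ at half-integers (not cancelled by $\Gamma(s)$) without affecting meromorphy; and for fixed $s$ the continuation depends holomorphically on $z$ off the set $\{-\lambda_n\}$, since each ingredient above does --- though this refinement is not needed for the stated assertion.
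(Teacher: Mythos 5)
Your argument is correct and follows essentially the same route as the paper: represent the tail of the Dirichlet series as a Laplace--Mellin transform of the heat trace, split at $t=1$, and continue the small-$t$ piece by iterated subtraction as in Proposition~\ref{spectralzetaprop}. The only difference is organizational --- you peel off all eigenvalues $\lambda_n\le\Lambda$ in one step with $\Lambda>-\mathrm{Re}(z)$, whereas the paper extends iteratively from $\mathrm{Re}(z)>0$ to $\mathrm{Re}(z)>-\lambda_1$, then to $\mathrm{Re}(z)>-\lambda_k$, and so on --- which is the same idea.
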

\begin{proof}
Assuming first that $z > 0$ we expand the right-hand side of (\ref{Hurwitzzetaintegraltransform}) as follows
\begin{align}\label{Hurwitzzetaexpansion}
\notag \zeta_M(s,z) =& \frac{1}{\Gamma(s)} \int_{1}^{\infty}\left[\textrm{Str}K_M(t)-1\right]e^{-zt}t^{s-1}dt\\
\notag &+\frac{1}{\Gamma(s)} \int_{0}^{1} \textrm{Str} K_M(t) e^{-zt}t^{s-1}dt\\
&-\frac{1}{\Gamma(s)z^{s}} \left[ \Gamma(s) - \int_{z}^{\infty}e^{-t}t^{s-1}dt\right]\,\,.
\end{align}
By (\ref{standardtraceasympinfty}), the first term in the right hand side of (\ref{Hurwitzzetaexpansion}) above is entire as a function of $s$. 
For the second term, which is initially defined for $\mathrm{Re}(s) > 1$, we can mimic the ideas starting with (\ref{spectralzetacontinuation2}) in Proposition \ref{spectralzetaprop} to provide its analytic continuation. The third term is entire as a function of $s$. Consequently, these arguments allow to extend the Hurwitz zeta function to $\mathrm{Re}(z)>0$. 

\hskip 0.2in Next, we extend the Hurwitz zeta to $\mathrm{Re}(z) > -\lambda_1$. In this direction, we have 
\begin{align}\label{Hurwitzzetaexpansion2}
\notag \zeta_M(s,z) =& \frac{1}{\Gamma(s)} \int_{0}^{\infty}e^{\lambda_1 t}\left[\textrm{Str}K_M(t)-1\right]e^{-(z+\lambda_1)t}t^{s-1}dt\\
 =& \sum_{0 <\lambda_n\le \lambda_1} (z+\lambda_n)^{-s} + 
 \frac{1}{\Gamma(s)}\int_{0}^{\infty}\left[ \sum_{\lambda_n > \lambda_1} e^{-(\lambda_n-\lambda_1)t} \right]e^{-(z+\lambda_1)t}t^{s-1}dt\,\,.
\end{align}
The first sum in the right-hand side of (\ref{Hurwitzzetaexpansion2}) has finitely many terms (according to the multiplicity of $\lambda_1$). 
For the second term, the sum in the bracket has the same asymptotic behavior as $\textrm{Str}K_M(t)-1$. Consequently, the second term is now defined 
for $\mathrm{Re}(z)> -\lambda_1$ and can be continued to all $s \in \mathbb{C}$. The process then can be repeated to extend to $\mathrm{Re}(z)> -\lambda_k$,
with $\lambda_k$ being the first eigenvalue surpassing $\lambda_1$.
\end{proof}

\hskip 0.2in
We end this section by presenting the behavior of the Hurwitz zeta through degeneration. 
For $\alpha\in(0,1/4)$ we define the $\alpha$-truncated Hurwitz zeta function as
\begin{align*}
\zeta_{M}^{(\alpha)}(s,z) = \sum_{\lambda_n>\alpha} (z+\lambda_n)^{-s}  = \frac{1}{\Gamma(s)}\int_{0}^{\infty}
\textrm{Str}K_M^{(\alpha)}(t)e^{-zt}t^s\frac{dt}{t}\,\,.
\end{align*}
With these in mind, we have the following result concerning the behavior of the truncated spectral zeta function through elliptic degeneration.

\begin{thm}\label{Hurwitzzetaconvergence}
Let $M_{q}$ be an elliptically degenerating sequence of compact
or non-compact hyperbolic Riemann surfaces of finite volume with
limiting surface $M_{\infty}$. Let $\alpha < 1/4$ be any number that
is not an eigenvalue of $M_{\infty}$. Then for any $s,z \in
\mathbb{C}$, we have
\begin{align*}
\lim_{q\to \infty} \Bigg[\zeta_{M_{q}}^{(\alpha)}(s,z)-
\frac{1}{\Gamma(s)}\int_{0}^{\infty}
\mathrm{Dtr}K_{M_{q}}(t)e^{-zt}t^s\frac{dt}{t}\Bigg] =
\zeta_{M_{\infty}}^{(\alpha)}(s,z).
\end{align*}
Furthemore, the convergence is uniform in half planes of the form
$\mathrm{Re}(s)>C$ and fixed $z$.
\end{thm}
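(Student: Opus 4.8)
The plan is to follow the template of the proof of Theorem~\ref{spectralzetaconvergence}, inserting the convergence factor $e^{-zt}$ and checking that it spoils none of the estimates. Using the Laplace--Mellin representations recorded above, the assertion reduces to
\begin{align*}
&\lim_{q\to\infty}\frac{1}{\Gamma(s)}\int_{0}^{\infty}\bigl[\textrm{Str}K_{M_{q}}^{(\alpha)}(t)-\textrm{Dtr}K_{M_{q}}(t)\bigr]e^{-zt}t^{s}\frac{dt}{t}\\
&\qquad\qquad=\frac{1}{\Gamma(s)}\int_{0}^{\infty}\textrm{Str}K_{M_{\infty}}^{(\alpha)}(t)e^{-zt}t^{s}\frac{dt}{t}.
\end{align*}
Invoking Definitions~\ref{standardheattrace} and~\ref{alphatruncatedtrace}, I would decompose the left-hand integrand as $\textrm{vol}(M_{q})K_{\mathbb H}(t,0)e^{-zt}t^{s-1}+B_{q}(t)e^{-zt}t^{s-1}$, where $B_{q}(t)=\textrm{Htr}K_{M_{q}}(t)+\textrm{Etr}K_{M_{q}}(t)-\sum_{\lambda_{q,n}\le\alpha}e^{-\lambda_{q,n}t}-\textrm{Dtr}K_{M_{q}}(t)$, and likewise write $\textrm{Str}K_{M_{\infty}}^{(\alpha)}(t)=\textrm{vol}(M_{\infty})K_{\mathbb H}(t,0)+B_{\infty}(t)$ with $B_{\infty}(t)=\textrm{Htr}K_{M_{\infty}}(t)+\textrm{Etr}K_{M_{\infty}}(t)-\sum_{\lambda_{\infty,n}\le\alpha}e^{-\lambda_{\infty,n}t}$ (there is no degenerating term on $M_{\infty}$, whose degenerating cones have become cusps).

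First I would dispose of the volume terms. Since the genus and the total number $\kappa$ of cusps and conical ends are fixed along the family, Gauss--Bonnet forces $\textrm{vol}(M_{q})=\textrm{vol}(M_{\infty})=2\pi(2g-2+\kappa)$, so the volume contribution is identical on the two sides and cancels; that it defines a meromorphic function of $s$ (for $z$ off the ray $(-\infty,-1/4]$, and by continuation elsewhere) follows by splitting $\int_{0}^{1}+\int_{1}^{\infty}$ and arguing exactly as in Proposition~\ref{spectralzetaprop}, using the $t\to0$ and $t\to\infty$ behaviour of $K_{\mathbb H}(t,0)$ from~(\ref{identitytraceasymp}). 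It then remains to show $\int_{0}^{\infty}\bigl[B_{q}(t)-B_{\infty}(t)\bigr]e^{-zt}t^{s-1}\,dt\to0$. Theorem~\ref{uniformlongtimeasymptotics}, rewritten through Definition~\ref{alphatruncatedtrace}, supplies for each $c<\alpha$ a constant $C$ with $|B_{q}(t)|\le Ce^{-ct}$ for all $t\ge0$ and uniformly in $q$, and the same bound holds for $B_{\infty}$ (take the limit surface, where the degenerating trace is absent). Hence on $\{\textrm{Re}(s)>0,\ \textrm{Re}(z)>-c\}$ the integrand is bounded by $C'e^{-(c+\textrm{Re}(z))t}t^{\textrm{Re}(s)-1}$, an integrable function of $t$ independent of $q$. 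Meanwhile $B_{q}(t)\to B_{\infty}(t)$ for each fixed $t>0$: the $\textrm{Htr}+\textrm{Etr}-\textrm{Dtr}$ part converges by Theorem~\ref{pointwiseanduniformconvergence}(a), and $\sum_{\lambda_{q,n}\le\alpha}e^{-\lambda_{q,n}t}\to\sum_{\lambda_{\infty,n}\le\alpha}e^{-\lambda_{\infty,n}t}$ because $\alpha$ is not an eigenvalue of $M_{\infty}$, so by Corollary~\ref{convergencecountingfunctionssmallT} the number of these small eigenvalues stabilizes and each of them converges. The dominated convergence theorem now yields the identity for $\textrm{Re}(s)>0$ and $\textrm{Re}(z)>-\alpha$.

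To reach arbitrary $s$ I would note that $e^{-zt}$ is bounded as $t\to0$, hence plays no role there: subtracting the finite initial segment of the $t\to0$ asymptotic expansion of $B_{q}(t)$ produces, exactly as in Proposition~\ref{spectralzetaprop}, the meromorphic continuation in $s$, whose residues (at $s=1$ and at the nonpositive integers) are $q$-dependent combinations of heat-expansion coefficients and small-eigenvalue power sums that converge as $q\to\infty$; the continued functions therefore still converge, uniformly on any half-plane $\textrm{Re}(s)>C$. To reach the remaining $z$ I would extend in the $z$-variable by the eigenvalue-peeling argument used for the meromorphic continuation of $\zeta_{M}(s,z)$ above: both sides continue to holomorphic functions of $z$ away from the (shifted) eigenvalues above $\alpha$ together with the ray $(-\infty,-1/4]$ produced by the continuous spectrum, and since the identity already holds on the open half-plane $\textrm{Re}(z)>-\alpha$, it persists by the identity theorem.

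The step I expect to cause the most trouble is pinning down a genuinely $q$-uniform dominating function over the entire range $t\in(0,\infty)$ together with enough control in $s$: near $t=0$ one must combine the uniform bound on $B_{q}$ coming from Theorem~\ref{uniformlongtimeasymptotics} with $|e^{-zt}t^{s-1}|=e^{-\textrm{Re}(z)t}t^{\textrm{Re}(s)-1}$, for large $t$ the uniform exponential decay of $B_{q}$ and of $K_{\mathbb H}(t,0)$, and one must use the decay of $1/\Gamma(s)$ to preserve uniformity on $\textrm{Re}(s)>C$; a secondary nuisance is that the singular locus in $z$ moves with $q$, so a little care is needed to see that the limit is holomorphic off the locus attached to $M_{\infty}$ before appealing to uniqueness.
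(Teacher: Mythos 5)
Your proposal is correct and follows the same route as the paper, whose entire proof of this theorem is the single sentence that the result follows by the arguments of Theorem \ref{spectralzetaconvergence}; your write-up simply carries out that adaptation explicitly (uniform exponential bound from Theorem \ref{uniformlongtimeasymptotics}, pointwise convergence from Theorem \ref{pointwiseanduniformconvergence}, dominated convergence, then continuation in $s$ and $z$). One small correction: Gauss--Bonnet for a surface with cone points gives $\mathrm{vol}(M_{q}) = 2\pi\bigl(2g-2+p+\sum_{i}(1-1/q_{i})\bigr) < 2\pi(2g-2+\kappa) = \mathrm{vol}(M_{\infty})$, so the volume terms do not cancel identically as you claim; but since $\mathrm{vol}(M_{q})\to\mathrm{vol}(M_{\infty})$ and the factor $\int_{0}^{\infty}K_{\mathbb H}(t,0)e^{-zt}t^{s-1}\,dt$ is a fixed function of $(s,z)$ independent of $q$, that contribution still converges and the rest of your argument goes through unchanged.
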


\begin{proof}
The result follows using similar arguments as in Theorem \ref{spectralzetaconvergence}.
\end{proof}

\section{Selberg zeta and determinant of the Laplacian}
\label{Selberg zeta and determinant of the Laplacian}

\hskip 0.2in
In this section, we investigate the behavior of the Selberg zeta function and the determinant of the Laplacian. After we recall definitions and some analytic properties of these functions, we describe their asymptotics through elliptic degeneration. It is worth mentioning that the spectral zeta, Selberg zeta, and the determinant of the Laplacian, are very much connected. Namely, the determinant of the Laplacian specialized to $s(s-1)$ is essentially the completed Selberg zeta function, with additional factors coming from the volume and the conical points (\cite{Sa 87}, \cite{Vo 87}, \cite{Ko 91}), while the spectral zeta function regularizes the determinant product. This comes with no surprise since the aforementioned functions appear in either the spectral side or the geometric side of the trace formula.

\subsection{Selberg zeta function}
\label{Selberg zeta function}

\hskip 0.2in
The Selberg zeta function is defined by the product
\begin{align*}
Z_M(s) = \prod_{\gamma \in
H(\Gamma)}\prod_{n=0}^{\infty}\Big(1-e^{-(s+n)\ell_{\gamma}}\Big).
\end{align*}
Following an elementary argument (see for example Lemma 4 in
\cite{JoLu 95}), one can estimate the number of closed geodesics of
bounded length. It then follows that the Euler product which defines
the Selberg zeta function converges for $\mathrm{Re}(s)>1$.

\hskip 0.2in
Following \cite{McK 72}, the integral representation is derived by carefully manipulating the logarithmic derivative of the Selberg zeta, namely
\begin{align*}
\frac{Z_{M}'(s)}{Z_{M}(s)}&= \sum_{\gamma
\in H(\Gamma)} \sum_{n=0}^{\infty}\frac{\ell_{\gamma}  e^{-(s+n)\ell_{\gamma}}}{1-e^{-(s+n)\ell_{\gamma}}} \\
&= \sum_{\gamma
\in H(\Gamma)} \sum_{n=0}^{\infty} \sum_{k=1}^{\infty} \ell_{\gamma} e^{-(s+n)\ell_{\gamma} k }\\
&= \sum_{\gamma
\in H(\Gamma)} \sum_{k=1}^{\infty} \frac{\ell_{\gamma} e^{-s k \ell_{\gamma}}}{1-e^{- k \ell_{\gamma}}} \\
&=\sum_{\gamma \in H(\Gamma)}\sum_{n=1}^{\infty}
\frac{\ell_{\gamma}}{2
\sinh(n\ell_{\gamma}/2)}e^{-(s-1/2)n\ell_{\gamma}}\,\,.
\end{align*}

Recalling the definition of the $K$-Bessel function
\begin{align*}
K_s(a,b) = \int_{0}^{\infty}e^{-(a^2t+b^2/t)} t^s \frac{dt}{t},
\end{align*}
as well as the fact that $K_{1/2}(b,a)=K_{-1/2}(a,b) =
(\sqrt{\pi}/b) e^{-2ab}$, allows us to further write

\begin{align*}
\frac{Z_{M}'(s)}{Z_{M}(s)}&=(2s-1) \sum_{\gamma
\in H(\Gamma)} \sum_{n=1}^{\infty}\frac{\ell_{\gamma}}{\sqrt{16\pi}
\sinh(n\ell_{\gamma}/2)}K_{1/2}(s-1/2,n\ell_{\gamma}/2)\\
&=(2s-1) \int_{0}^{\infty} \left[ \sum_{\gamma \in H(\Gamma)}\sum_{n=1}^{\infty}
\frac{\ell_{\gamma} e^{-(t/4+(n\ell_{\gamma})^2/(4t))}}{\sqrt{16\pi} \sinh(n\ell_{\gamma}/2)} \right] e^{-s(s-1)t} dt.
\end{align*}

Using the expression for the
hyperbolic heat trace (\ref{hyperbolictrace}),
the logarithmic derivative of the Selberg zeta function can be
expressed via the integral
\begin{align*}
\frac{Z_{M}'(s)}{Z_{M}(s)}=(2s-1)\int_{0}^{\infty}\textrm{Htr}K_{M}(t)e^{-s(s-1)t}dt\,\,.
\end{align*}

\hskip 0.2in 
For $\alpha < 1/4$, we define the $\alpha$-truncated logarithmic
derivative of the Selberg zeta function, using the above integral representation minus the contribution to the trace
of the small eigenvalues. Consequently,  we have
\begin{align*}
\frac{Z_{M}^{(\alpha)'}(s)}{Z_{M}^{(\alpha)}(s)}=
(2s-1)\int_{0}^{\infty}\textrm{Htr}K_{M}^{(\alpha)}(t)e^{-s(s-1)t}dt=
\frac{Z_{M}'(s)}{Z_{M}(s)}- \sum_{\lambda_{M,n}<\alpha}
\frac{2s-1}{s(s-1)+\lambda_{M,n}}\,\,,
\end{align*}
for $\mathrm{Re}(s)>1$ or $\mathrm{Re}(s^2-s) > -1/4$.  

\begin{thm}\label{logderivativeselbergzetaconvergence}
Let $M_{q}$ be an elliptically degenerating sequence of compact
or non-compact hyperbolic Riemann surfaces of finite volume with
limiting surface $M_{\infty}$. Let $\alpha < 1/4$ be any number that
is not an eigenvalue of $M_{\infty}$. Then, for any $s$ with
$\mathrm{Re}(s)>1$ or $\mathrm{Re}(s^2-s) > -1/4$, we have
\begin{align*}
\lim_{q\to \infty}
\frac{Z_{M_{q}}^{(\alpha)'}(s)}{Z_{M_{q}}^{(\alpha)}(s)}
=\frac{Z_{M_{\infty}}^{(\alpha)'}(s)}{Z_{M_{\infty}}^{(\alpha)}(s)}.
\end{align*}
\end{thm}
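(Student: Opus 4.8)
The plan is to work from the integral representation of the $\alpha$-truncated logarithmic derivative displayed just above the theorem,
\[
\frac{Z_{M_{q}}^{(\alpha)'}(s)}{Z_{M_{q}}^{(\alpha)}(s)}=(2s-1)\int_{0}^{\infty}\textrm{HTr}K_{M_{q}}^{(\alpha)}(t)\,e^{-s(s-1)t}\,dt ,\qquad \textrm{HTr}K_{M_{q}}^{(\alpha)}(t)=\textrm{HTr}K_{M_{q}}(t)-\sum_{\lambda_{q,n}<\alpha}e^{-\lambda_{q,n}t},
\]
so that, the prefactor $(2s-1)$ being irrelevant, it suffices to pass the limit $q\to\infty$ under the integral. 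The difficulty is that the available convergence and boundedness statements, Theorems \ref{pointwiseanduniformconvergence} and \ref{uniformlongtimeasymptotics}, are phrased for the combined quantity $\textrm{HTr}K_{M_{q}}+\textrm{ETr}K_{M_{q}}-\textrm{DTr}K_{M_{q}}$, not for the hyperbolic trace alone. The bridge is the observation that, by the explicit formula (\ref{elliptictrace}), the elliptic heat trace of a finite-volume hyperbolic surface depends only on the orders of its elliptic fixed points; since $\textrm{DTr}K_{M_{q}}$ is exactly the elliptic contribution of the degenerating classes $DE(\Gamma_{q})$, the difference $\textrm{ETr}K_{M_{q}}(t)-\textrm{DTr}K_{M_{q}}(t)$ is the contribution of the non-degenerating elliptic classes, whose orders are precisely those of the elliptic fixed points of $M_{\infty}$. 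Hence $\textrm{ETr}K_{M_{q}}(t)-\textrm{DTr}K_{M_{q}}(t)=\textrm{ETr}K_{M_{\infty}}(t)$ identically in $t$ once $q$ is large, and therefore
\[
\textrm{HTr}K_{M_{q}}^{(\alpha)}(t)=\bigl[\,\textrm{HTr}K_{M_{q}}(t)+\textrm{ETr}K_{M_{q}}(t)-\textrm{DTr}K_{M_{q}}(t)-\sum_{\lambda_{q,n}<\alpha}e^{-\lambda_{q,n}t}\,\bigr]-\textrm{ETr}K_{M_{\infty}}(t),
\]
where, because $\alpha$ is not an eigenvalue of $M_{\infty}$ (so $<$ and $\le$ agree in the sum for large $q$), the bracket is the quantity $\textrm{HTr}K_{M_{q}}^{(\alpha)}(t)+\textrm{ETr}K_{M_{q}}^{(\alpha)}(t)-\textrm{DTr}K_{M_{q}}(t)$ of Definition \ref{alphatruncatedtrace}.

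With this reduction I would verify the hypotheses of the dominated convergence theorem for the integrand $\textrm{HTr}K_{M_{q}}^{(\alpha)}(t)\,e^{-s(s-1)t}$. Pointwise, for each fixed $t>0$ the bracket tends to $\textrm{HTr}K_{M_{\infty}}^{(\alpha)}(t)+\textrm{ETr}K_{M_{\infty}}^{(\alpha)}(t)$ by Theorem \ref{pointwiseanduniformconvergence}(a) together with the convergence of the eigenvalues below $\alpha$ (Corollary \ref{convergencecountingfunctionssmallT}, again using that $\alpha$ is not an eigenvalue of $M_{\infty}$); subtracting the fixed function $\textrm{ETr}K_{M_{\infty}}(t)$ and applying Definition \ref{alphatruncatedtrace} to $M_{\infty}$ (for which $\textrm{DTr}K_{M_{\infty}}\equiv 0$) gives $\textrm{HTr}K_{M_{q}}^{(\alpha)}(t)\to\textrm{HTr}K_{M_{\infty}}^{(\alpha)}(t)$. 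For domination, Theorem \ref{uniformlongtimeasymptotics} bounds the bracket by $Ce^{-ct}$ for every $c<\alpha$, uniformly in $q$ and in $t\ge 0$, while (\ref{elliptictraceasymp}) bounds $|\textrm{ETr}K_{M_{\infty}}(t)|$ by a constant for $t\le 1$ and by $O(e^{-t/4})$ for $t\ge 1$; since $c<\alpha<1/4$, this yields $|\textrm{HTr}K_{M_{q}}^{(\alpha)}(t)|\le C_{1}e^{-ct}$ for all $t\ge 0$, uniformly in $q$, whence $|\textrm{HTr}K_{M_{q}}^{(\alpha)}(t)\,e^{-s(s-1)t}|\le C_{1}e^{-(c+\mathrm{Re}(s^{2}-s))t}$ is $q$-independent and integrable on $(0,\infty)$ as soon as $\mathrm{Re}(s^{2}-s)>-c$. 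Letting $c\uparrow\alpha$ this covers all $s$ with $\mathrm{Re}(s^{2}-s)>-\alpha$; the remaining part of the stated region is reached by choosing $\alpha$ in a spectral gap of $M_{\infty}$ below $1/4$ (so that no eigenvalue of $M_{q}$ sits in $(\alpha,1/4)$ for large $q$ and the decay improves to $O(e^{-t/4})$), or by analytic continuation from that subregion.

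It remains to apply the dominated convergence theorem to interchange limit and integral, and then restore the factor $(2s-1)$; this is routine, paralleling the proof of Theorem \ref{spectralzetaconvergence}. I expect the substantive step to be the reduction in the first paragraph — recognizing that $\textrm{ETr}K_{M_{q}}-\textrm{DTr}K_{M_{q}}$ is independent of $q$ and equals $\textrm{ETr}K_{M_{\infty}}$ — since this is exactly what lets the results on the mixed trace be carried over to the purely hyperbolic trace that feeds the Selberg zeta function. The only other point needing a little care is the bookkeeping that matches the uniform decay rate $e^{-ct}$, $c<\alpha$, to the claimed domain $\mathrm{Re}(s^{2}-s)>-1/4$.
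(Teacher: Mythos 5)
Your proof is correct and follows essentially the same route as the paper, whose own proof simply invokes the integral representation of the logarithmic derivative and defers to the dominated-convergence argument of Theorem \ref{spectralzetaconvergence}. Your bridging identity $\textrm{ETr}K_{M_{q}}(t)-\textrm{DTr}K_{M_{q}}(t)=\textrm{ETr}K_{M_{\infty}}(t)$, which converts the available bounds on the combined trace $\textrm{HTr}+\textrm{ETr}-\textrm{DTr}$ into bounds on the hyperbolic trace alone, is precisely the detail the paper leaves implicit, and your closing remark about reconciling the uniform decay rate $e^{-ct}$ with $c<\alpha$ against the claimed region $\mathrm{Re}(s^{2}-s)>-1/4$ flags a point the paper does not address at all.
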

\begin{proof}
The proof follows from the integral representation of the
logarithmic derivative of the Selberg zeta function to which we
apply similar arguments as in Theorem \ref{spectralzetaconvergence}.
\end{proof}
As a direct corollary to Theorem
\ref{logderivativeselbergzetaconvergence} we obtain the following
result.

\begin{cor}
Let $M_{q}$ be an elliptically degenerating sequence of compact
or non-compact hyperbolic Riemann surfaces of finite volume with
limiting surface $M_{\infty}$.
\begin{enumerate}[(a)]
\item For any $s$ with $\mathrm{Re}(s)>1$ or $\mathrm{Re}(s^2-s)>-1/4$, we have
\begin{align*}
\lim_{q\to \infty} Z_{M_{q}}(s) = Z_{M_{\infty}}(s).
\end{align*}
\item At $s=1$, we have
\begin{align*}
\lim_{q\to \infty} Z_{M_{q}}'(1) = Z_{M_{\infty}}'(1).
\end{align*}
\end{enumerate}
\end{cor}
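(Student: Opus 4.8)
The plan is to deduce convergence of the Selberg zeta functions themselves from the convergence of their logarithmic derivatives in Theorem~\ref{logderivativeselbergzetaconvergence}, by integrating. First I would pass from the truncated logarithmic derivative to the full one. From the identity recorded just before Theorem~\ref{logderivativeselbergzetaconvergence},
\[
\frac{Z_{M_{q}}'(s)}{Z_{M_{q}}(s)}=\frac{Z_{M_{q}}^{(\alpha)'}(s)}{Z_{M_{q}}^{(\alpha)}(s)}+\sum_{\lambda_{q,n}<\alpha}\frac{2s-1}{s(s-1)+\lambda_{q,n}}\,.
\]
Since $\alpha$ is not an eigenvalue of $M_{\infty}$, the number of eigenvalues of $M_{q}$ below $\alpha$ eventually stabilises and each of them converges to the corresponding eigenvalue of $M_{\infty}$ (Corollary~\ref{convergencecountingfunctionssmallT}, cf.~\cite{GJ 16}); hence the finite sum on the right converges as $q\to\infty$, and combined with Theorem~\ref{logderivativeselbergzetaconvergence} this gives $Z_{M_{q}}'/Z_{M_{q}}\to Z_{M_{\infty}}'/Z_{M_{\infty}}$. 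Exactly as in the proof of Theorem~\ref{spectralzetaconvergence}, the integral representations of these logarithmic derivatives together with the uniform trace estimates of Corollary~\ref{smalltasymptotics} and Theorem~\ref{uniformlongtimeasymptotics} show that this convergence is locally uniform on the open set $\{\mathrm{Re}(s)>1\}\cup\{\mathrm{Re}(s^{2}-s)>-1/4\}$, away from the locally finitely many poles.

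To convert this into convergence of the $Z_{M_{q}}$ I next need a normalisation point. On $\{\mathrm{Re}(s)>1\}$ the Euler product gives $\log Z_{M}(s)=\sum_{\gamma\in H(\Gamma)}\sum_{n\ge 0}\log(1-e^{-(s+n)\ell_{\gamma}})$, which for real $\sigma=\mathrm{Re}(s)$ large is bounded, up to an absolute constant, by $\sigma\int_{\ell_{0}}^{\infty}e^{-\sigma L}N(L)\,dL$, with $\ell_{0}$ the systole and $N(L)$ the number of closed geodesics of length at most $L$. Because the degeneration is purely elliptic, $\ell_{0}$ stays bounded below and $N(L)\le Ce^{L}$, uniformly in $q$ --- both being consequences of the convergence of the geometry of $M_{q}\setminus C_{q,\varepsilon}$ recorded in Section~\ref{Geometry of elliptic degeneration}. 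Hence $Z_{M_{q}}(\sigma)\to 1$ as $\sigma\to\infty$, uniformly in $q$ (including $q=\infty$).

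For part~(a), for $s$ with $\mathrm{Re}(s)>1$ write $Z_{M_{q}}(s)=Z_{M_{q}}(\sigma)\exp\!\bigl(\int_{\sigma}^{s}(Z_{M_{q}}'/Z_{M_{q}})(w)\,dw\bigr)$ with $\sigma>1$ real; letting $q\to\infty$ (dominated convergence for the path integral, via the uniform trace bounds) and then $\sigma\to\infty$ gives $Z_{M_{q}}(s)\to Z_{M_{\infty}}(s)$ locally uniformly on $\{\mathrm{Re}(s)>1\}$. Continuing along paths that avoid neighbourhoods of the zeros of the limit function extends this to the component of $\{\mathrm{Re}(s^{2}-s)>-1/4\}$ lying in $\{\mathrm{Re}(s)>1/2\}$, and a Vitali/Hurwitz argument (the $Z_{M_{q}}$ are holomorphic and, by the maximum principle together with the punctured-neighbourhood convergence, locally uniformly bounded) pushes the convergence across those zeros, including $s=1$. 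The remaining region $\mathrm{Re}(s)<1/2$ is then reached from this one by the functional equation for $Z_{M}$, whose $q$-dependent completion factor --- built from $\mathrm{vol}(M_{q})$, the elliptic data, and, in the non-compact case, the scattering determinant --- converges as $q\to\infty$. Part~(b) follows at once: $1$ lies in the domain of part~(a) and $Z_{M_{q}}$ is holomorphic near $s=1$ with a simple zero there coming from $\lambda_{0}=0$, so for small $r$ with $\{|s-1|\le r\}$ contained in that domain, $Z_{M_{q}}'(1)=\frac{1}{2\pi i}\oint_{|s-1|=r}Z_{M_{q}}(s)(s-1)^{-2}\,ds$, and the locally uniform convergence on the circle $|s-1|=r$ lets us pass to the limit under the integral.

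The one genuinely non-formal ingredient is the uniform-in-$q$ control of the Euler product for large $\mathrm{Re}(s)$ in the second step --- equivalently, the uniform lower bound on the systole and the uniform counting of short closed geodesics along the family --- together with, for the region $\mathrm{Re}(s)<1/2$ alone, the convergence of the scattering and elliptic data entering the functional equation; the rest (convergence of the finitely many small eigenvalues, integrating a locally uniformly convergent family of logarithmic derivatives, and the Hurwitz-type passage across zeros) is routine once Theorem~\ref{logderivativeselbergzetaconvergence} is in hand.
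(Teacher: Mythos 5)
The paper offers no argument for this corollary beyond the phrase ``as a direct corollary to Theorem \ref{logderivativeselbergzetaconvergence}'', so your write-up necessarily supplies more than the source does. On the connected component of the stated domain that contains the half-plane $\{\mathrm{Re}(s)>1\}$ (namely that half-plane together with the right wedge $\{\mathrm{Re}(s)>1/2+|\mathrm{Im}(s)|\}$ of $\{\mathrm{Re}(s^2-s)>-1/4\}$), your argument is sound and is surely the intended one: restore the small-eigenvalue sum using Corollary \ref{convergencecountingfunctionssmallT}, normalize via the Euler product as $\mathrm{Re}(s)\to\infty$ (legitimate because elliptic degeneration pinches no geodesics, so the hyperbolic length spectra converge, with a uniform lower bound on the systole and a uniform geodesic-counting bound), integrate the locally uniformly convergent logarithmic derivatives, and pass across the zeros of $Z_{M_\infty}$ by the maximum-principle/Vitali device. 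Since $s=1$ lies in this component, part (b) then follows from the Cauchy integral formula exactly as you say.

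The gap is in the left wedge $\{\mathrm{Re}(s)<1/2-|\mathrm{Im}(s)|\}$, which is a \emph{separate connected component} of the stated domain and therefore cannot be reached by integrating $Z'/Z$ from the Euler-product region; you correctly see that a normalization point there must come from elsewhere and propose the functional equation, but your assertion that its $q$-dependent completion factor ``converges as $q\to\infty$'' is exactly the nontrivial point and cannot be taken for granted. The elliptic data entering that factor is governed by the sums $\sum_{n=1}^{q_\gamma-1}\bigl(2q_\gamma\sin(n\pi/q_\gamma)\bigr)^{-1}$, which by the paper's own Proposition \ref{degeneratingellipticcountingprop} grow like $\pi^{-1}\log q_\gamma$; what saves the functional equation is that only certain twisted sums $\sum_n e^{-(2j+1)\pi i n/q_\gamma}\bigl(2q_\gamma\sin(n\pi/q_\gamma)\bigr)^{-1}$ (arising from the residues at the poles of $(1+e^{-2\pi r})^{-1}$) actually occur there, and these do converge --- but that computation has to be carried out, and nothing in the paper supplies it. Moreover, in the non-compact case the completion factor also contains the scattering determinant $\varphi_q(s)$, whose convergence through elliptic degeneration (together with the emergence of the completion factors for the $m$ new cusps of $M_\infty$) is precisely the deep input of Hejhal and of \cite{GvP 09}, not a result established here. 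So as written your proof establishes the corollary on the component containing $\{\mathrm{Re}(s)>1\}$ (hence all of part (b)), but the left wedge of part (a) remains open: either supply the convergence of the completed functional equation, or restrict the statement to the connected component containing $\{\mathrm{Re}(s)>1\}$.
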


\subsection{Determinant of the Laplacian} 

\hskip 0.2in 
For a compact surface $M$, the determinant of Laplacian $\Delta_M$ is defined as the infinite product
\begin{align}\label{determinant}
\mathrm{det\,} \Delta_M = \prod_{\lambda_n > 0 } \lambda_n \,\,,
\end{align}
(see for instance \cite{Sa 87}, \cite{Vo 87}, \cite{JoLa 93}, \cite{JoLa 93b}, \cite{JoLa 94}, \cite{Ts 97} ). 
To give meaning to such divergent product, we observe that if the above product converged, than the logarithm of the determinant  
could be written as    
\begin{align*}
- \log \mathrm{det\,} \Delta_M = - \sum_{\lambda_n > 0 } \log \lambda_n  = 
\frac{d}{ds} \sum_{\lambda_n > 0 } \lambda_n^{-s} \Big|_{s=0} = \zeta_{M}'(0)\,\,.
\end{align*}
Recalling from Proposition \ref{spectralzetaprop} that the spectral zeta $\zeta_M(s)$ is analytic at $s=0$, the above formal manipulation suggests that the divergent product in (\ref{determinant}) be regularized as 
\begin{align}\label{determinantvalue}
\mathrm{det\,} \Delta_M = \exp( -\zeta_{M}'(0) )\,\,.
\end{align}

\hskip 0.2in 
For $0<\alpha <1/4$, we can express the derivative of $\alpha$-truncated spectral zeta function as follows
\begin{align*}
\frac{d}{ds}
\zeta_{M}^{(\alpha)}(s) = -\frac{\Gamma'(s)}{\Gamma(s)^2} \int_{0}^{\infty}
\mathrm{Str}K_{M}^{(\alpha)}(t) t^s\frac{dt}{t}
+ \frac{1}{\Gamma(s)} \frac{d}{ds} \left(\int_{0}^{\infty}
\mathrm{Str}K_{M}^{(\alpha)}(t) t^s\frac{dt}{t}\right)\,\,.
\end{align*}
At $s=0$ the gamma function has a simple pole, so that $1/\Gamma(s) = 0$ and consequently the second term above has no contribution to the logarithmic determinant. Directly from the Weierstrass product definition of the gamma function, it follows that
\begin{align*}
\frac{\Gamma'}{\Gamma^2}(0) = \lim_{s\to 0} \frac{\Gamma'/\Gamma(s)}{\Gamma(s)} = \lim_{s\to 0} \frac{-\gamma-1/s}{1/s} = -1\,\,,
\end{align*}
where $\gamma$ denotes the Euler-Mascheroni constant. Consequently, the logarithmic determinant can be rewritten as
\begin{align}\label{logdetintrepresentation}
\log \mathrm{det}^{(\alpha)} \Delta_{M} = - \int_{0}^{\infty}
\textrm{Str}K_{M}^{(\alpha)}(t) \frac{dt}{t}\,\,.
\end{align}

\hskip 0.2in 
The integral representation (\ref{logdetintrepresentation})  above together with Theorem  \ref{spectralzetaconvergence} concerning the behavior of the spectral zeta through elliptic degeneration, yield the following result concerning the behavior of the regularized determinant.
\begin{cor}\label{logdeterminantlaplacianconvergence}
Let $M_{q}$ be an elliptically degenerating sequence of compact
or non-compact hyperbolic Riemann surfaces of finite volume with
limiting surface $M_{\infty}$. Let $\alpha < 1/4$ be any number that
is not an eigenvalue of $M_{\infty}$. Then
\begin{align*}
\lim_{q\to \infty} \Bigg[ \log \mathrm{det}^{(\alpha)}
\Delta_{M_{q}} + \int_{0}^{\infty}
\mathrm{Dtr} K_{M_{q}}(t) \frac{dt}{t}\Bigg] = \log
\mathrm{det}^{(\alpha)} \Delta_{M_{\infty}}.
\end{align*}
\end{cor}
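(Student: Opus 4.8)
The plan is to read the corollary as the $s=0$ derivative of Theorem \ref{spectralzetaconvergence}. By (\ref{determinantvalue}) and (\ref{logdetintrepresentation}) one has $\log\mathrm{det}^{(\alpha)}\Delta_M=-\frac{d}{ds}\big|_{s=0}\zeta_M^{(\alpha)}(s)$, and, with the same bookkeeping (the weight $\frac1{\Gamma(s)}$ and differentiation at the origin), the quantity $\int_0^\infty\mathrm{Dtr}K_{M_q}(t)\frac{dt}{t}$ is the value at $s=0$ of $\frac{d}{ds}\big[\frac1{\Gamma(s)}\int_0^\infty\mathrm{Dtr}K_{M_q}(t)t^s\frac{dt}{t}\big]$. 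With these identifications the bracketed quantity in the statement is exactly $-\frac{d}{ds}\big|_{s=0}$ of the bracketed quantity in Theorem \ref{spectralzetaconvergence}, so the entire content of the corollary is that $\lim_{q\to\infty}$ may be moved past this $s$-derivative.

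To justify that, I would first record that the three functions occurring in Theorem \ref{spectralzetaconvergence} are holomorphic on a fixed complex neighbourhood $U$ of $s=0$: the truncated spectral zetas $\zeta_{M_q}^{(\alpha)}$ and $\zeta_{M_\infty}^{(\alpha)}$ are meromorphic with their only pole at $s=1$ (cf. Proposition \ref{spectralzetaprop}; removing finitely many small eigenvalues does not affect the pole structure near $0$), while $\frac1{\Gamma(s)}\int_0^\infty\mathrm{Dtr}K_{M_q}(t)t^s\frac{dt}{t}$ is regular at $s=0$ because $\mathrm{Dtr}K_{M_q}(t)=O(1)$ as $t\to 0$ (cf. (\ref{elliptictraceasymp})), so the integral has at worst a simple pole at $s=0$, annihilated by the simple zero of $1/\Gamma(s)$. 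Theorem \ref{spectralzetaconvergence} gives convergence uniform on half-planes $\mathrm{Re}(s)>C$, hence uniform on the compact $\overline U$ once $C$ is chosen negative; by the Weierstrass theorem on locally uniform limits of holomorphic functions the derivatives at $s=0$ converge as well, and unwinding the identifications yields
\begin{align*}
\lim_{q\to\infty}\Big[\log\mathrm{det}^{(\alpha)}\Delta_{M_q}+\int_0^\infty\mathrm{Dtr}K_{M_q}(t)\tfrac{dt}{t}\Big]=-\frac{d}{ds}\Big|_{s=0}\zeta_{M_\infty}^{(\alpha)}(s)=\log\mathrm{det}^{(\alpha)}\Delta_{M_\infty}.
\end{align*}

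Alternatively one can argue directly from (\ref{logdetintrepresentation}): the bracket equals $-\int_0^\infty\big[\mathrm{Str}K_{M_q}^{(\alpha)}(t)-\mathrm{Dtr}K_{M_q}(t)\big]\frac{dt}{t}$, split at $t=1$. On $[1,\infty)$ the integrand decays exponentially uniformly in $q$ — its identity part $\mathrm{vol}(M_q)K_{\mathbb{H}}(t,0)$ by (\ref{identitytraceasymp}) together with the universal bound $\mathrm{vol}(M_q)\le 2\pi(2g-2+\kappa)$, its $\alpha$-truncated hyperbolic–elliptic–degenerating part by Theorem \ref{uniformlongtimeasymptotics} — and it converges pointwise by Theorem \ref{pointwiseanduniformconvergence}(a) (with $\mathrm{vol}(M_q)\to\mathrm{vol}(M_\infty)$), so dominated convergence applies. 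On $[0,1]$ the integrand is only $O(t^{-3/2})$ as $t\to 0$, uniformly in $q$, by Corollary \ref{smalltasymptotics} together with (\ref{identitytraceasymp}), so this piece must be taken in the $\frac1{\Gamma(s)}$-regularized sense: inserting $t^s$ makes it absolutely convergent for $\mathrm{Re}(s)>3/2$, where dominated convergence is available, and the continuation down to $s=0$ — carried out as in Proposition \ref{spectralzetaprop} — together with a local uniform bound propagates the convergence. Either way, the one genuine point is this regularization at $t=0$: the naive integrals diverge, so one has to commute $\lim_{q\to\infty}$ with the $\frac1{\Gamma(s)}$-weighted $t$-integral and with the $s$-derivative at the origin; granted the uniform small- and large-time bounds of Theorems \ref{pointwiseanduniformconvergence} and \ref{uniformlongtimeasymptotics} this is routine, and the remaining manipulations mirror the proof of Theorem \ref{spectralzetaconvergence}.
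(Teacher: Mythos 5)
Your proposal is correct and follows the same route as the paper, which derives the corollary by combining the integral representation (\ref{logdetintrepresentation}) with Theorem \ref{spectralzetaconvergence}; your first argument simply makes explicit that the bracket in the corollary is $-\frac{d}{ds}\big|_{s=0}$ of the bracket in that theorem and that the locally uniform convergence there (take $C<0$) lets Weierstrass' theorem pass the limit through the derivative. Your added care about the simple pole of $\int_0^\infty \mathrm{Dtr}K_{M_q}(t)\,t^{s}\frac{dt}{t}$ at $s=0$ being cancelled by the zero of $1/\Gamma(s)$ --- so that $\int_0^\infty \mathrm{Dtr}K_{M_q}(t)\frac{dt}{t}$ must be read in the same regularized sense as $\log\mathrm{det}^{(\alpha)}$ itself --- is a point the paper leaves implicit, and is handled correctly.
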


\section{Integral kernels}
\label{Integral kernels}

\hskip 0.2in As in the articles \cite{HJL 97}, \cite{JoLu 97a}, and
\cite{JoLu 97b}, one can prove the asymptotic behavior of numerous
other spectral quantities having once established the heat kernel
convergence (see Theorem \ref{convergenceheatkernels}), and the regularized convergence
theorem of heat traces (see Theorem \ref{pointwiseanduniformconvergence}). For completeness, we list here
some of the questions that now can be answered and, for the sake of
brevity, we outline the method of proof. In the cases when Theorem
\ref{convergenceheatkernels} is used, one obtains a result which amounts to continuity
through degeneration.


\textbf{The resolvent kernel} \\
The resolvent kernel $g_M(w,x,y)$ is the integral kernel which
inverts the operator $\Delta-w$ on the orthogonal complement of the
null space of $\Delta -w$. In the case $w=0$, the resolvent kernel
becomes the classical Green's function. For $\mathrm{Re}(w)>0$ and
$x\ne y$, the resolvent kernel is defined by
\begin{equation*}
g_M(w,x,y) = -\int_{0}^{\infty}K(t,x,y) e^{-wt}dt.
\end{equation*}

\hskip 0.2in If the surface is compact, we can use the spectral expansion of the heat kernel as in equation (\ref{heatkernelexpansioncompact}) to write
\begin{align*}
g_M(w,x,y) = -\sum_{n=0}^{\infty} \Bigg(\frac{1}{w+\lambda_{M,n}}\Bigg)\phi_{M,n}(x)\phi_{M,n}(y)
\end{align*}
for $\mathrm{Re}(w)>0$ and $x\ne y$. From the above, it easily follows that the resolvent kernel has a meromorphic continuation to the entire plane with poles located at the negative eigenvalues of the Laplacian.
If the surface is not compact, there is a similar spectral expansion for the resolvent kernel (coming from (\ref{heatkernelexpansionnoncompact}) together with the above integral representation).

\hskip 0.2in Let $0<\alpha<1/4$. Then the $\alpha$-truncated resolvent kernel $g_M^{(\alpha)}(w,x,y)$ is given by
\begin{align*}
g_M^{(\alpha)}(w,x,y) = g_M(w,x,y) + \sum_{\lambda_{M,n} < \alpha} \Bigg(\frac{1}{w+\lambda_{M,n}}\Bigg)\phi_{M,n}(x)\phi_{M,n}(y).
\end{align*}
It then follows that the truncated resolvent kernel inverts $\Delta+w$ on the orthogonal complement of the space spanned by the eigenfunctions that correspond to the eigenvalues of $\Delta$ which are less than $\alpha$.

\hskip 0.2in With the above remarks in mind, we have the following result.
\begin{thm}
Let $M_{q}$ be an elliptically degenerating sequence of compact
or non-compact hyperbolic Riemann surfaces of finite volume with
limiting surface $M_{\infty}$. Let $0< \alpha < 1/4$.
\begin{enumerate}[(a)]
\item For all fixed $w$ with $\mathrm{Re}(w)>0$, we have
\begin{align*}
\lim_{q\to \infty} g_{M_{q}}(w,x,y) = g_{M_{\infty}}(w,x,y).
\end{align*}
The convergence is uniform for $x\ne y$ bounded away from the developing cusps and in half-planes $\mathrm{Re}(w)>0$.
\item For all fixed $w$ with $\mathrm{Re}(w)>-\alpha$, we have
\begin{align*}
\lim_{q\to \infty} g_{M_{q}}^{(\alpha)}(w,x,y) = g_{M_{\infty}}^{(\alpha)}(w,x,y).
\end{align*}
The convergence is uniform for $x\ne y$ bounded away from the developing cusps and in half-planes $\mathrm{Re}(w)>-\alpha$.
\end{enumerate}
\end{thm}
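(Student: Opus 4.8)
The plan is to argue directly from the heat-kernel representation $g_M(w,x,y)=-\int_0^\infty K_M(t,x,y)e^{-wt}\,dt$ and to pass to the limit $q\to\infty$ underneath the integral by dominated convergence. Fix once and for all a compact set $\mathcal K\subset M_\infty$ disjoint from the developing cusps and a number $\delta_0>0$; we work with $x,y\in\mathcal K$ and $d(x,y)\ge\delta_0$, and (for part (a)) with $w$ in a half-plane $\mathrm{Re}(w)\ge c$ for an arbitrary fixed $c>0$. Choose $\varepsilon>0$ small enough that $\mathcal K\cap C_{q,\varepsilon}=\varnothing$ for all large $q$ and set $\varepsilon'=\delta_0$. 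For each fixed $t>0$, Theorem \ref{convergenceheatkernels} already gives $K_{M_q}(t,x,y)\to K_{M_\infty}(t,x,y)$, so the only thing to supply is a uniform-in-$q$ integrable majorant, split according to whether $t$ is small or large.

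For $0<t\le 1$, apply part (b) of Theorem \ref{convergenceheatkernels} with the bounded set $B=[0,1]$, which has $\inf_{z\in B}\mathrm{Re}(z)=0$: the convergence of $K_{M_q}(t,x,y)$ is uniform on $B\times\big((\mathcal K\times\mathcal K)\setminus D_{\varepsilon,\varepsilon'}\big)$, hence $\sup_q|K_{M_q}(t,x,y)|\le C_0$ there and $|K_{M_q}(t,x,y)e^{-wt}|\le C_0$ on $(0,1]$. For $t\ge 1$, use that $e^{-t\Delta_{M_q}}$ is a positive operator, so the on-diagonal Cauchy--Schwarz bound $|K_{M_q}(t,x,y)|\le\big(K_{M_q}(t,x,x)K_{M_q}(t,y,y)\big)^{1/2}$ holds; since $t\mapsto K_{M_q}(t,x,x)$ is non-increasing this gives $|K_{M_q}(t,x,y)|\le\big(K_{M_q}(1,x,x)K_{M_q}(1,y,y)\big)^{1/2}\le C_1$, the last bound uniform in $q$ by part (a) of Theorem \ref{convergenceheatkernels} applied near $t=1$. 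Thus $|K_{M_q}(t,x,y)e^{-wt}|\le C_1e^{-ct}$ for $t\ge 1$, and dominated convergence yields $g_{M_q}(w,x,y)\to g_{M_\infty}(w,x,y)$ pointwise. Uniformity follows by splitting $\int_0^\infty=\int_0^\delta+\int_\delta^T+\int_T^\infty$: the outer integrals are bounded by $C_0\delta$ and $(C_1/c)e^{-cT}$ uniformly in $q$ (and likewise for $M_\infty$), while on $[\delta,T]$ the convergence of the integrand is uniform in $(t,x,y,w)$ by part (a) of Theorem \ref{convergenceheatkernels}; letting $q\to\infty$, then $\delta\to0$ and $T\to\infty$, completes part (a).

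For part (b) one replaces $K_{M_q}$ by the truncated kernel $K^{(\alpha)}_{M_q}(t,x,y)=K_{M_q}(t,x,y)-\sum_{\lambda_{q,n}<\alpha}e^{-\lambda_{q,n}t}\phi_{q,n}(x)\phi_{q,n}(y)$ and works with $g^{(\alpha)}_{M}(w,x,y)=-\int_0^\infty K^{(\alpha)}_M(t,x,y)e^{-wt}\,dt$. Pointwise convergence of the subtracted finite sum — together with the fact that its number of terms eventually stabilizes, since $\alpha$ is not an eigenvalue of $M_\infty$ and hence $N_{M_q,0}(\alpha)$ is ultimately constant by Corollary \ref{convergencecountingfunctionssmallT} — as well as uniform-in-$q$ bounds for that sum on $(0,1]$, are imported from the convergence of small eigenvalues and of their eigenfunctions established in \cite{GJ 16}. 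The gain from truncation is decay at infinity: $K^{(\alpha)}_{M_q}(t,\cdot,\cdot)$ is the kernel of $e^{-t\Delta_{M_q}}$ composed with the spectral projection onto $[\alpha,\infty)$, again a positive operator, so $|K^{(\alpha)}_{M_q}(t,x,y)|\le\big(K^{(\alpha)}_{M_q}(t,x,x)K^{(\alpha)}_{M_q}(t,y,y)\big)^{1/2}$, and for $t\ge1$ every spectral contribution obeys $e^{-\lambda t}\le e^{-\lambda}e^{-\alpha(t-1)}$ because $\lambda\ge\alpha$, whence $K^{(\alpha)}_{M_q}(t,x,x)\le e^{-\alpha(t-1)}K_{M_q}(1,x,x)\le C_1e^{-\alpha(t-1)}$. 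Consequently $|K^{(\alpha)}_{M_q}(t,x,y)e^{-wt}|\le C_1e^{\alpha}e^{-(\alpha+\mathrm{Re}(w))t}$ for $t\ge1$, integrable uniformly in $q$ exactly when $\mathrm{Re}(w)>-\alpha$; the small-$t$ majorant and the $\int_0^\delta+\int_\delta^T+\int_T^\infty$ splitting are then identical to part (a).

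The main obstacle is making all of these majorants genuinely uniform in $q$ up to the developing cusps; this is precisely why we confine $x,y$ to the fixed compact set $\mathcal K$ disjoint from those cusps and invoke the uniform-convergence clauses (a) and (b) of Theorem \ref{convergenceheatkernels} on such sets, rather than its bare pointwise statement. A secondary point requiring care is the convergence of the low-lying spectral projection underlying part (b), which we do not re-prove but quote from the companion paper \cite{GJ 16}.
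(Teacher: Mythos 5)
Your proposal is correct and follows essentially the same route as the paper, whose proof is only a two-sentence sketch: convergence of the heat kernel (Theorem \ref{convergenceheatkernels}) plus dominated convergence for part (a), and the convergence of small eigenvalues and eigenfunctions for part (b). You have simply supplied the explicit dominating functions (the uniform small-time bound away from the diagonal and the developing cusps, and the Cauchy--Schwarz/monotonicity bound giving $e^{-\alpha(t-1)}$ decay for the truncated kernel) that the paper leaves implicit.
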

\begin{proof}
Part (a) follows from the convergence of the heat kernel as in Proposition \ref{convergenceheatkernels} together with the dominated convergence theorem. Part (b) is similar to part (a) with the addition of the convergence of the small eigenvalues and eigenfunctions from Section \ref{Convergence of spectral counting functions and small eigenvalues}.
\end{proof}

\textbf{The Poisson kernel} \\
A Poisson kernel on the surface $M$ is a smooth function $P_M(w,x,y)$ defined on $\mathbb{R}^{+}\times M\times M$, satisfying the following conditions: Suppose that $f$ is a bounded and continuous function on $M$ and define
\begin{align*}
u(w,x) = \int_{M} P_M(w,x,y) f(y)d\mu(y).
\end{align*}
Then the Poisson kernel satisfies the differential equation
\begin{align*}
(\Delta_x -\partial_w^2)u(w,x)=0
\end{align*}
and the Dirac condition
\begin{align*}
f(x)=\lim_{w\to 0^+} \int_{M} P_M(w,x,y) f(y)d\mu(y)
\end{align*}
uniformly on compact sets. For a more detailed discussion on the Poisson kernel we refer the reader to \cite{JoLa 03}.

\hskip 0.2in The Poisson kernel is given through the
G-transform
\begin{equation*}
P_M(w,x,y) = \frac{w}{\sqrt{4\pi}}\int_{0}^{\infty}K_M(t,x,y)
e^{-w^2/4t}t^{-3/2}dt.
\end{equation*}
We conclude continuity of the Poisson kernel through degeneration. By
arguing as in the case of the resolvent kernel mentioned above, the
region of convergence extends to all $w \in \mathbb{C}$.\\

\textbf{The wave kernel}\\
From the Poisson kernel we can define the wave kernel by rotation of the $w$-variable, namely
\begin{equation*}
W_M(w,x,y)=P_M(-iw,x,y).
\end{equation*}
The wave kernel $W_M(w,x,y)$ is a fundamental solution to the wave equation
\begin{equation*}
\Delta_x+\partial_w^2 =0.
\end{equation*}
As with the Poisson kernel, we obtain continuity of the wave kernel through degeneration.\\

\newpage

\addcontentsline{toc}{section}{Bibliography}
\renewcommand\refname{Bibliography}

\vspace{5mm}\noindent
Daniel Garbin \\
Department of Mathematics and Computer Science \\
Queensborough Community College\\
222-05 56th Avenue \\
Bayside, NY 11364 \\
U.S.A. \\
e-mail: dgarbin@qcc.cuny.edu

\vspace{5mm} \noindent
Jay Jorgenson \\
Department of Mathematics \\
The City College of New York \\
Convent Avenue at 138th Street \\
New York, NY 10031\\
U.S.A. \\
e-mail: jjorgenson@mindspring.com

\end{document}